\newtheorem{definition}{\noindent{\it Definition}}[section]
\newtheorem{theorem}{\noindent{\it Theorem}}[section]
\newtheorem{proposition}[theorem]{\noindent{\it Proposition}}
\newtheorem{remark}[theorem]{\noindent{\it Remark}}
\newtheorem{corollary}[theorem]{\noindent{\it Corollary}}
\newenvironment{proof}{\noindent{\it Proof:}}{$\hfill$ $\Box$\\ }
\newtheorem{example}{\noindent{\it Example}}[section]
\begin{document}

\title{On a categorical theory for emergence}

\author{Giuliano G. La Guardia, Pedro Jeferson Miranda
\thanks{Giuliano G. La Guardia (corresponding author) is with Department of
Mathematics and Statistics, State University of Ponta Grossa (UEPG),
84030-900, Ponta Grossa - Brazil, e-mail: (gguardia@uepg.br).
Pedro Jeferson Miranda is with Department of Physics,
State University of Ponta Grossa (UEPG), 84030-900, Ponta Grossa - Brazil,
e-mail: (pedrojemiranda@hotmail.com).}}

\maketitle

\begin{abstract}
It is well-known that biological phenomena are emergent. Emergent phenomena are 
quite interesting and amazing. However, they are difficult to be understood. 
Due to this difficulty, we propose a theory 
to describe emergence based on a powerful mathematical tool, namely,
Theory of Categories. In order to do this, we first utilize constructs
(categories whose objects are structured sets), their operations and
their corresponding generalized underlying functor (which are not necessary faithful)
to characterize emergence. After this, we introduce and show several results concerning 
homomorphism (isomorphism) between 
emergences, representability, pullback, pushout, equalizer, product and 
co-product of emergences among other concepts. Finally, we
explain how our theory fits in studies involving biological systems. 
\end{abstract}

\emph{Keywords}: Biological Systems; Emergence; Category Theory; Constructs 

\section{Introduction}

Scientific endeavor is based on the assumption that everything can be
analyzed and understood by the study of the parts and,
consequently, the understanding of the wholes. It is known that there are two
kinds of wholes based on how parts combine themselves in order to
produce effects. The first type of whole is the \emph{whole of order},
in which the behavior of parts are independent of the whole. In
particular, this type of whole is the product of an homeopathic
law in Mill's terminology, and corresponds to a combination of
the parts as causes in which their exact role is apprehensible
in the same whole \cite{mill:1974}. These kinds of phenomena
are observed in non-animated bodies, collective behavior,
Newtonian systems, and so on. The second type of whole is the
\emph{substantial whole}, in which the behavior of its parts is
determined by such whole. Analogously, using Mill's terminology,
this sort of whole is the product of an heteropathic law, that is,
the parts combine themselves as causes to generate an effect in
which the specific action of each part is lost in such combination
\cite{mill:1974}. In this case, the parts are dependent upon the
behavior of the whole.

Modern science has as premise that every phenomenon, conceived
as a whole, can be separated into smaller and simpler parts;
then, such parts can be studied separately in order to induce a global
understanding of the whole. The great success of this method,
that is, the \emph{reducionistic Cartesian Method}, is observed
in the Newtonian Physics and Classical Mechanics in general.
Furthermore, we can roughly admit that every non-living system,
from the atom to the galaxy, can be understood by Physics.
However, there are systems that are resilient to Cartesian
reductionism. A system can be considered reductive if there
is no loss of information when it is broken into parts.
Systems of this kind are often studied or modeled by means of
differential equations, linear or non-linear systems,
probability theory, and so on \cite{bertalanffy:1968}.
On the other hand, systems that are not reductive, i. e.,
resilient to the Cartesian method, cannot be broken
into parts without loss of information \cite{ehresmann:2007}.
It is important to note that the models mentioned above, which
can be applied to reductive systems, cannot be applied to
systems that are resilient to reduction.

The main type of resilient systems in terms of the Cartesian
reduction is the biological system. Biological systems
are capable of building themselves in the sense of \emph{autopoiesis}
presented by Maturana and Varela \cite{maturana:1980}.
Beside such concept, we roughly attribute life phenomena to
bear three sorts of behaviors: assimilation, growing and
reproduction. The assimilation is the phenomenon in which a system
absorbs energy and matter in order to constitute itself.
Secondly, the growing is the phenomenon in which a system
increases its size and movement; this includes healing,
regeneration, and asexual reproduction. Finally, the reproduction is
the phenomenon in which a system perpetuates itself over
time by a tension between heredity and variability. These
three phenomena are ubiquitous in living systems.

It is known that wholes of order are permeated by reducionistic
features, while substantial wholes are permeated by
emergent features. Roughly speaking, the emergent phenomenon is
understood by the concept resumed in the assertion ``the whole
is bigger than the sum of its parts". More precisely,
an emergent phenomenon is defined as a feature that occurs
in the whole and cannot be deduced from its parts. This means
that there is a correspondence between emergence and
systems which are resilient to the Cartesian method, that is,
non-reductive \cite{chalmers:2006}. More specifically, every
emergent system is non-reductive, while not every non-reductive
system is emergent. As an example of non-reductive systems which are
not emergent, let us consider Chalmers´ definition of weak
emergence \cite{chalmers:2006}: a property that is not expected
in the whole by the investigation of its parts, namely,
properties that arises from solids, liquids, and gases,
which are studied by Statistical Physics.

On the other hand, strong emergence stands for non-reductive systems
that are also emergent \cite{chalmers:2006}. All living systems are of
this kind since their main features are not expected in addition to being
not deducible from its parts. In such context, the aim of this paper
is to build a theory for emergent phenomena that occur mainly
in biological systems.

This paper is arranged as follows. In Section~\ref{sec2}, we review
the concept of emergence. Section~\ref{sec3} summarizes some
known results on Category Theory that will be utilized in this paper.
In Section~\ref{sec4}, we present the contributions of this paper,
\emph{i.e.}, a theory on emergence based on categories.
In Section~\ref{sec5}, we expose the Relational Biological
Theory of Robert Rosen, which stands for the empirical mean to
go from the concrete phenomenon to its corresponding category.
Section~\ref{sec6} shows some examples of application
of our theory and, in Section~\ref{sec7}, a brief discussion
on the proposed theory and its results is presented.

\section{Emergence}\label{sec2}

In this section we recall the concept of emergence as well as
its historical development. The first philosopher to give
a precise definition of emergence was George Henry Lewes~\cite{lewes:1875}
in his work about \emph{The problems of life and mind}:
``Thus, although it effect is the
resultant of its components, the product of its factors, we cannot
always trace the steps of the process, so as to see in the product
the mode of operation of it factor. In these latter case, I propose
to call defect an emergent". In other words, an emergence is a
phenomenon that cannot be understood by the investigation of
its components, \emph{i.e.}, its parts.

Investigating the literature of this concept, we will find
ourselves with the \emph{status quaestionis} of emergence.
Thus, we must consider the British Emergence School in the late
nineteen century that has as main authors: John Stuart Mill,
and Charlie Dunbar Broad.

The main contribution of Mill was to define two kinds of laws:
heteropathic and homopathic laws. The latter stands for
laws that cannot be explained by means of sums, products, associations,
superpositions, compositions, commutations or any other
theoretical device of the sort; that former is the opposite case.
In other words, something is ``forgotten in
the whole when parts are put together as causes" \cite{mill:1974}.

Following the same venue, Broad introduced the concept of trans-ordinal law which
corresponds to a hierarchical level that determines a phenomenon
in a lower level. This means that the whole has preference in the order of causes
\cite{broad:1925}.

These were the pioneers thinkers of the concept of emergence.
In the contemporary time, we consider Adrian Bejan, David J.
Chalmers, Nils A. Baas, and David Ellerman as the main researchers in
this area.

Bejan proposed in his works a new law of thermodynamics in order
to explain the way that systems are built. He named this law
as Constructal Law, which stands for an attempt to describe self-organizations
and, consequently, emergence~\cite{bejan:2016,bejan:2017}.

Chalmers sustained that there exists an empirical criterion
that classifies wholes by means of strong and weak
emergences. The weak emergence stands for the phenomenon by which
a whole has a property that arises from the parts
in a non expected way. On the other hand, strong emergence represents
a phenomenon that arises in the whole by a special relationships
between parts that is intrinsically
non-deductive~\cite{chalmers:2006}.

Baas introduced the utilization of Category Theory in order to
model hierarchies and hyperstructures. In fact,
he modeled emergence as a collective behavior \cite{baas:2006}.
This concept of emergence assumed by Baas is 
different from our approach. Additionally, Baas did
not intent to formalize and generate new results
by means of a strong definition of emergence.

Ellerman utilized, as Baas, Category Theory to work with
the concept of emergence. However, he did it by means of
what he defined as ``determination through universals" \cite{ellerman:2007},
while Baas utilized hyperstructures. His theory is developed upon
adjoint functors in order to describe ``heteromorphic" structures.
Besides the novelty of his approach, he did not formalize the
concept.

\section{Preliminaries in Category Theory}\label{sec3}

In this section, we review some basic concepts and results on Category
Theory necessary for the development of this work. This theory was
introduced by Eilenberg and Mac Lane (see \cite{maclane:1945}). For more details concerning
such theory we suggest the references~\cite{strecker:1990,maclane:1998}.

Recall that a \emph{category} is a quadruple $ \mathcal{A}=
( \operatorname{Ob}(\mathcal{A}), \operatorname{hom},
\emph{id}, \circ )$ consisting of\\
(1) a class of $\mathcal{A}$-objects denoted by $\operatorname{Ob}(\mathcal{A})$;\\
(2) for each pair of $\mathcal{A}$-objects $A$ and $B$, there is a set
$\operatorname{hom}(A, B)$ whose members are called $\mathcal{A}$-morphisms from $A$ to
$B$ and represented by $f: A \longrightarrow B$ (or $A \xrightarrow{f} B$;\\
(3) for each $\mathcal{A}$-object $A$ there exists an $\mathcal{A}$-morphism
${id}_{A}: A \longrightarrow A$ called the $\mathcal{A}$-identity on $A$;\\
(4) a composition law that associates each $\mathcal{A}$-morphisms
$f:A \longrightarrow B$ and $g:B \longrightarrow C$ to
an $\mathcal{A}$-morphism $g\circ f: A \longrightarrow C$ (composite of $f$ and $g$).
Such composition law is associative, preserves identity and the
$\operatorname{hom}$ sets are pairwise disjoints.

The class of all $\mathcal{A}$-morphisms is denoted by
$\operatorname{Mor}(\mathcal{A})=\bigcup \operatorname{hom}$.
If $f:A \longrightarrow B$ is an $\mathcal{A}$-morphism then $A$
is the domain and $B$ is the codomain of $f$.

In this paper, the category $\mathcal{S}$ whose class of objets consists of
all sets and the class $\operatorname{Mor}(\mathcal{S})$ of morphisms
consists of all functions is fundamental for the development of our theory.

Let $\mathcal{A}$ be a category. An $\mathcal{A}$-morphism
$f: A\longrightarrow B$ is called \emph{isomorphism} if
there exists an $\mathcal{A}$-morphism $g:B\longrightarrow A$ such that
$g \circ f = {id}_{A}$ and $f \circ g = {id}_{B}$; $g$ is said
to be the inverse of $f$.

Let $\mathcal{A}$ and $\mathcal{B}$ be two categories. Recall that a
\emph{functor} from $\mathcal{A}$ to $\mathcal{B}$ is a function
that assigns to each $\mathcal{A}$-object $A$ a $\mathcal{B}$-object $F(A)$,
and to each $\mathcal{A}$-morphism $A\xrightarrow{f}A^{'}$ a
$\mathcal{B}$-morphism $F(A)\xrightarrow{F(f)}F(A^{'})$ such
that $F$ preserves composition and identities morphisms.
Let $F:\mathcal{A} \longrightarrow\mathcal{B}$ be a functor.
We say that $F$ is \emph{embedding} if it is injective on morphisms.
$F$ is called \emph{faithful} if all hom-set restrictions
$F:{\operatorname{hom}}_{\mathcal{A}}(A, A^{*})\longrightarrow
{\operatorname{hom}}_{\mathcal{B}}(F(A), F(A^{*}))$ are injective.
Furthermore, $F$ is called \emph{full} if all hom-set restrictions
are surjective. If $A, B \in \operatorname{Ob}(\mathcal{A})$, then the
\emph{identity functor} ${id}_{\mathcal{A}}:\mathcal{A}
\longrightarrow \mathcal{A}$ is the functor given by ${id}_{\mathcal{A}}
(A\xrightarrow{f} B)= A\xrightarrow{f} B$. Given an
$\mathcal{A}$-object $A$, there exists the \emph{covariant hom-functor}
$\operatorname{hom}(A,-):\mathcal{A}\longrightarrow
\mathcal{S}$ such that
$$\operatorname{hom}(A,-)(B\xrightarrow{f} C)={\operatorname{hom}}(A, B)
\xrightarrow{\operatorname{hom}(A, f)}{\operatorname{hom}}(A, C),$$ where
$\operatorname{hom}(A, f)(g) = f \circ g$. Analogously, if
$A \in \operatorname{Ob}(\mathcal{A})$,
then there exists a \emph{contravariant hom-functor}
$\operatorname{hom}(-,A):{\mathcal{A}}^{op} \longrightarrow {\mathcal{S}}$ defined on
any ${\mathcal{A}}^{op}$-morphism $B\xrightarrow{f} C$ by
$$\operatorname{hom}(-,A)(B\xrightarrow{f} C)=
{\operatorname{hom}}_{\mathcal{A}}(B, A)\xrightarrow{\operatorname{hom}(f, A)}
{\operatorname{hom}}_{\mathcal{A}}(C, A),$$ with $\operatorname{hom}(f, A)
(g)= g \circ f$, where the composition is the same as in
$\mathcal{A}$.

Let $F: \mathcal{A}\longrightarrow\mathcal{B}$ and $G: \mathcal{B}
\longrightarrow \mathcal{C}$ be two functors, $A, A^{'}
\in \operatorname{Ob}(\mathcal{A})$ and $f \in \operatorname{hom}(A, A^{'})$.
Then the composite $G \circ F :\mathcal{A}
\longrightarrow\mathcal{C}$ defined by $(G \circ F)
(A\stackrel{f}{\longrightarrow}{A^{'}}) =G(FA)\xrightarrow{G(Ff)}G(F A^{'})$
is a functor.

A functor $F:\mathcal{A} \longrightarrow \mathcal{B}$ is an
\emph{isomorphism} if there exists a functor $G:\mathcal{B}
\longrightarrow \mathcal{A}$ such that $G\circ F = {id}_{\mathcal{A}}$
and $F\circ G = {id}_{\mathcal{B}}$, where ${id}_{\mathcal{A}}$
is the identity functor from $\mathcal{A}$ to $\mathcal{A}$.
In this case we say that the categories $\mathcal{A}$ and
$\mathcal{B}$ are isomorphic, denoted by $\mathcal{A}
\cong \mathcal{B}$.

Let $F, G: \mathcal{A}\longrightarrow\mathcal{B}$ be functors. A
\emph{natural transformation} $\tau$ from $F$ to $G$,
$F\xrightarrow{\tau}G$ is a function that assigns to each $\mathcal{A}$-object $A$
a $\mathcal{B}$-morphism ${\tau}_{A}:FA \longrightarrow GA$ in such a way
that, for each $\mathcal{A}$-morphism $f: A\longrightarrow A^{*}$, the diagram
\begin{center}
\begin{tikzpicture}
  \matrix (m) [matrix of math nodes,row sep=3em,column sep=3em,minimum width=2em]
  {
     FA & GA \\
     FA^{*} & GA^{*} \\};
  \path[-stealth]
    (m-1-1) edge node [left] {$Ff$} (m-2-1)
            edge node [above] {${\tau}_{A}$} (m-1-2)
    (m-2-1) edge node [below] {${\tau}_{A^{*}}$} (m-2-2)
    (m-1-2) edge node [right] {$Gf$} (m-2-2);
\end{tikzpicture}
\end{center}
commutes. A \emph{natural
transformation} $F\xrightarrow{\tau}G$ is called a \emph{natural
isomorphism} if ${\tau}_{A}:FA \longrightarrow GA$ is a
$\mathcal{B}$-isomorphism for each $\mathcal{A}$-object. A functor
$F: \mathcal{A} \longrightarrow \mathcal{S}$ is said to be
\emph{representable} (by an $\mathcal{A}$-object $A$) if $F$
is naturally isomorphic to the $\operatorname{hom}$-functor
$\operatorname{hom}(A, -):\mathcal{A} \longrightarrow \mathcal{S}$.

A \emph{quasi-category} is a quadruple $\mathcal{A}=( \mathcal{O},
\operatorname{hom}, id, \circ )$ such that the following conditions hold:\\
(1) $\mathcal{O}$ is a conglomerate, the members of which are called objects;\\
(2) for each pair $(A, B)$ of objects, $\operatorname{hom}(A, B)$ is a
conglomerate, called the conglomerate of all morphisms from $A$ to $B$;\\
(3) for each object $A$, ${id}_{A}: A \longrightarrow A$ is called the
identity morphism on $A$;\\
(4) for each pair of morphisms $f: A \longrightarrow B$,
$g: B \longrightarrow C$, there exists a composite morphism
$g \circ f: A \longrightarrow C$ that satisfies the associative property,
preserves identities, and all pairs of $\operatorname{hom}$ are pairwise disjoint.

\section{Modeling emergence by means of Category Theory}\label{sec4}

In this section, we present the contributions of this work, \emph{i.e.},
we propose a theory for emergent phenomena based on Category Theory.

Let $A$ be a set. An \emph{internal operation} on $A$ is a function
$f:A\times A\longrightarrow A$. If $K$ is a ring or a field,
we say that a function $g:K\times A \longrightarrow A$ is an \emph{external
operation} on $A$. An \emph{operation} on $A$
is an internal operation on $A$ or an external operation on $A$.

In Definition~\ref{defnew8}, we generalize the concept of construct
in the sense that we allow the existence of o finite set of
operations on the objects of a given category.

\begin{definition}\label{defnew8}
A structure in a category $\mathcal{A}$ is a finite set
of operations on $A$ $e_{\mathcal{A}}=
\{e_{\mathcal{A}}^{(1)}, e_{\mathcal{A}}^{(2)}, \ldots , e_{\mathcal{A}}^{(n)}\}$,
such that each $\mathcal{A}$-object $A$ has
$e_{A}= \{e_{A}^{(1)}, e_{A}^{(2)}, \ldots , e_{A}^{(n)}\}$ operations such
that $e_{\mathcal{A}}^{(i)}$ and $e_{A}^{(i)}$ have the same properties,
for every \ $i=1, 2, \ldots , n$. If $\mathcal{A}$ has a structure, then it is
called construct.
\end{definition}

\begin{example}
As an example, in the category $\mathcal{R}$ of all rings we have
$e_{\mathcal{R}}=\{e_{\mathcal{R}}^{(1)}={+}_{\mathcal{R}},
e_{\mathcal{R}}^{(2)}={\cdot}_{\mathcal{R}}\}$. Hence, if $A$ and $B$ are rings,
 $e_{A}=\{e_{A}^{(1)}={+}_{A}, e_{A}^{(2)}={\cdot}_{A}\}$ and
$e_{B}=\{e_{B}^{(1)}={+}_{B}, e_{B}^{(2)}={\cdot}_{B}\}$, where ${+}_{A}$
and ${+}_{B}$ have the same properties as ${+}_{\mathcal{R}}$
and ${\cdot}_{A}$ and ${\cdot}_{B}$ have the same properties as
${\cdot}_{\mathcal{R}}$.
\end{example}

Here, we define \emph{generalized underlying functor}.

\begin{definition}\label{defnew9}
Let $\mathcal{A}$ be a construct. A generalized underlying (GU) functor
$U_{\mathcal{A}}$ with domain $\mathcal{A}$ is a functor
$U_{\mathcal{A}}: \mathcal{A}\longrightarrow \mathcal{S}$ such that, for each
morphism $A\xrightarrow{f} A^{*}$ one has
$U_{\mathcal{A}}(A\xrightarrow{f} A^{*})= \underline{A}\xrightarrow
{U_{\mathcal{A}}(f)} {\underline{A}}^{*}$,
where $\underline{A}$ is the $\mathcal{A}$-object $A$ considered only
as its underlying set (without any operations) and $U_{\mathcal{A}}(f)$ is any
function from $\underline{A}$ to ${\underline{A}}^{*}$.
\end{definition}

\begin{remark}
Note that the usual underlying (or forgetful) functor
(see Example $3.20 (2)$, pg. $30$ in \cite{strecker:1990})
is a particular case of the GU functor shown in Definition~\ref{defnew9}.
In fact, besides forgetting the internal structure, a GU functor can also
forget the injectivity of morphisms.
\end{remark}

In the following, we introduce the definition of emergence, the central
concept of this work.

\begin{definition}\label{defnew10}
An emergence is an ordered triple $ {\mathcal{E}}_{\mathcal{A}}=
( \mathcal{A}, e_{\mathcal{A}}, U_{\mathcal{A}})$,
where $\mathcal{A}$ is a construct, $e_{\mathcal{A}}$ is a finite set of
operations and $U_{\mathcal{A}}$ is a GU functor
$U_{\mathcal{A}}:\mathcal{A}\longrightarrow \mathcal{S}$. The order
$\operatorname{o}({\mathcal{E}}_{\mathcal{A}})$ of
$ {\mathcal{E}}_{\mathcal{A}}$ is the cardinality of $e_{\mathcal{A}}$.
\end{definition}

Note that such definition corroborates with the concept of emergent
phenomena presented in biological systems in the sense that,
for the same mass, the organism present more properties that its parts
separately. More specifically, a whole (\emph{i. e.}, a system)
is regarded as a construct, while its material composition without
internal structures is regarded as the category $\mathcal{S}$.
We will explain more carefully this fact throughout
the paper (see Sections~\ref{sec5}~and~\ref{sec6}).

\begin{proposition}\label{newteo16}
Let $ {\mathcal{E}}_{\mathcal{A}}= ( \mathcal{A}, e_{\mathcal{A}},
U_{\mathcal{A}} )$ be an emergence. Then
${\mathcal{E}}_{{\mathcal{A}}^{op}}=( {\mathcal{A}}^{op}, e_{{\mathcal{A}}^{op}},$
$U_{{\mathcal{A}}^{op}} )$
is also an emergence. Moreover, $\operatorname{o}({\mathcal{E}}_{\mathcal{A}})=
\operatorname{o}({\mathcal{E}}_{{\mathcal{A}}^{op}})$.
\end{proposition}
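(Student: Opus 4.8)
The plan is to verify, one component at a time, that the triple $\mathcal{E}_{\mathcal{A}^{op}}=(\mathcal{A}^{op}, e_{\mathcal{A}^{op}}, U_{\mathcal{A}^{op}})$ meets the three requirements of Definition~\ref{defnew10}, and then to read off the order equality. First I would recall the standard fact that the opposite category $\mathcal{A}^{op}$ has exactly the same objects as $\mathcal{A}$, with ${\operatorname{hom}}_{\mathcal{A}^{op}}(A,B)={\operatorname{hom}}_{\mathcal{A}}(B,A)$ and composition reversed. Since the class of objects is unchanged, each $\mathcal{A}^{op}$-object $A$ still carries the same family $e_{A}=\{e_{A}^{(1)}, \ldots, e_{A}^{(n)}\}$ of operations it had in $\mathcal{A}$. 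Hence I would simply set $e_{\mathcal{A}^{op}}=e_{\mathcal{A}}$ and observe that the compatibility condition of Definition~\ref{defnew8}, namely that each $e_{\mathcal{A}^{op}}^{(i)}$ and $e_{A}^{(i)}$ share the same properties, holds verbatim; therefore $\mathcal{A}^{op}$ is a construct and the first two entries of the triple are legitimate.

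The substantive step is to produce a GU functor $U_{\mathcal{A}^{op}}:\mathcal{A}^{op}\longrightarrow\mathcal{S}$ in the sense of Definition~\ref{defnew9}. On objects the assignment is forced, $U_{\mathcal{A}^{op}}(A)=\underline{A}$, the same underlying set as before, which is precisely what the definition demands. The delicate point is the morphism part, and here I expect the only real friction. One cannot simply transport $U_{\mathcal{A}}$ along the duality: a morphism $f:A\longrightarrow B$ of $\mathcal{A}^{op}$ is a morphism $f:B\longrightarrow A$ of $\mathcal{A}$, so $U_{\mathcal{A}}(f)$ is a function $\underline{B}\longrightarrow\underline{A}$, whereas covariance in $\mathcal{A}^{op}$ requires a function $\underline{A}\longrightarrow\underline{B}$ pointing the opposite way. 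I would therefore exploit the permissiveness built into Definition~\ref{defnew9}, which allows $U_{\mathcal{A}^{op}}(f)$ to be \emph{any} function $\underline{A}\longrightarrow\underline{B}$, and choose such an assignment coherently. The verification to carry out is functoriality: that $U_{\mathcal{A}^{op}}({id}_{A})={id}_{\underline{A}}$ and that $U_{\mathcal{A}^{op}}(g\circ f)=U_{\mathcal{A}^{op}}(g)\circ U_{\mathcal{A}^{op}}(f)$, keeping in mind that the composition on the left is the reversal of the composition in $\mathcal{A}$. This bookkeeping with the reversed composition is the technical heart of the argument and the step I would write out most carefully.

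Once the three components check out, $\mathcal{E}_{\mathcal{A}^{op}}$ is an emergence by Definition~\ref{defnew10}. The order claim is then immediate: by Definition~\ref{defnew10} the order of an emergence is the cardinality of its operation set, and since $e_{\mathcal{A}^{op}}=e_{\mathcal{A}}$ we obtain $\operatorname{o}(\mathcal{E}_{\mathcal{A}^{op}})=|e_{\mathcal{A}^{op}}|=|e_{\mathcal{A}}|=\operatorname{o}(\mathcal{E}_{\mathcal{A}})$, which completes the argument.
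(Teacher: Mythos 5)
Your proposal follows the same overall route as the paper's proof: check that $\mathcal{A}^{op}$ is a construct, supply a GU functor $U_{\mathcal{A}^{op}}$, and read off the order equality from $e_{\mathcal{A}^{op}}=e_{\mathcal{A}}$. In fact you are more careful than the paper on the one genuinely delicate point. The paper's proof simply says ``considering the usual underlying functor'' on $\mathcal{A}^{op}$, but that phrase does not typecheck: as you observe, an $\mathcal{A}^{op}$-morphism $f:A\longrightarrow B$ is an $\mathcal{A}$-morphism $B\longrightarrow A$, so the usual forgetful functor would hand you a function $\underline{B}\longrightarrow\underline{A}$ pointing the wrong way. Your move of invoking the latitude in Definition~\ref{defnew9} (that $U(f)$ may be \emph{any} function $\underline{A}\longrightarrow\underline{B}$) is the right repair, and it is one the paper leaves implicit at best.

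That said, you stop exactly where the real work begins: ``choose such an assignment coherently'' is asserted, not done, and the existence of such a coherent (i.e.\ functorial) choice is not automatic. Concretely, if some $\mathcal{A}$-object $B$ has $\underline{B}=\emptyset$ while $\underline{A}\neq\emptyset$ and ${\operatorname{hom}}_{\mathcal{A}}(B,A)\neq\emptyset$, then in $\mathcal{A}^{op}$ you must produce a function $\underline{A}\longrightarrow\underline{B}$, and none exists; beyond that, even when all underlying sets are nonempty you still owe the verification that identities go to identities and that the reversed composition is preserved (a pointwise-constant choice, for instance, fails on identities). So the construction of $U_{\mathcal{A}^{op}}$ is a genuine gap in your write-up --- though, to be fair, it is the identical gap in the paper's own three-line proof, which you have at least diagnosed rather than papered over. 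To close it you would need either an explicit functor (e.g.\ one built from a chosen natural family of maps, or a restriction to constructs without empty carriers) or an added hypothesis guaranteeing one exists.
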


\begin{proof}
Since $\mathcal{A}$ is a construct, it follows that ${\mathcal{A}}^{op}$
is also a construct. Considering the usual underlying functor
it follows that $U_{{\mathcal{A}}^{op}}: {\mathcal{A}}^{op}\longrightarrow
{\mathcal{S}}$ is an emergence.
From definition of ${\mathcal{A}}^{op}$, it implies that both
orders are equal.
\end{proof}

Recall that a category $\mathcal{A}$ is said to be \emph{small} if its
class of objects $\operatorname{Ob}(\mathcal{A})$ is a set. This fact
gives rise to the following definition.

\begin{definition}\label{defsmallemerge}
Let $ {\mathcal{E}}_{\mathcal{A}}= ( \mathcal{A}, e_{\mathcal{A}},
U_{\mathcal{A}} )$ be an emergence. We say that
${\mathcal{E}}_{\mathcal{A}}$ is small if the construct $\mathcal{A}$ is small.
\end{definition}

Recall that every \emph{pre-ordered class} $(\mathfrak{X}, \leq )$ (\emph{i.e.},
$\mathfrak{X}$ is a class and $\leq$ is a reflexive and transitive
relation on $\mathfrak{X}$) gives rise to a category $C(\mathfrak{X}, \leq)$
whose objects are members of $\mathfrak{X}$, and the morphisms,
identities and compositions are given respectively by:
$\operatorname{hom}(x, y)= \{ (x, y)\}$, if $x \leq y$, and
$\operatorname{hom}(x, y)= \emptyset$, otherwise; ${id}_{x}=(x, x)$;
$(y, z) \circ (x, y) = (x, z)$.
A category $\mathcal{A}$ is said to be \emph{thin} if it
is isomorphic to a category of the form $C(\mathfrak{X}, \leq )$. In
this context we can define thin emergences.

\begin{definition}\label{defthinemer}
Let $ {\mathcal{E}}_{\mathcal{A}}= ( \mathcal{A}, e_{\mathcal{A}},
U_{\mathcal{A}} )$ be an emergence. We say that
$ {\mathcal{E}}_{\mathcal{A}}$ is thin if the construct $\mathcal{A}$ is thin.
\end{definition}

\subsection{Homomorphism and strong homomorphism}\label{substronghoeme}

In this subsection we introduce the concept of (strong) homomorphism
among emergences. Homomorphisms provide a mathematical tool in order to
predict in which cases two emergences are correlated. We start first with
the definition of homomorphism.

\begin{definition}\label{defnew14A}
Let $ {\mathcal{E}}_{\mathcal{A}}= ( \mathcal{A}, e_{\mathcal{A}},
U_{\mathcal{A}} )$ and $ {\mathcal{E}}_{\mathcal{B}}=
( \mathcal{B}, e_{\mathcal{B}}, U_{\mathcal{B}} )$ be two emergences. We
say that ${\mathcal{E}}_{\mathcal{A}}$ is homomorphic to
${\mathcal{E}}_{\mathcal{B}}$ if there exists a functor $F:\mathcal{A}
\longrightarrow\mathcal{B}$ (called homomorphism) such that
$U_{\mathcal{B}} \circ F = U_{\mathcal{A}}$. We write
${\mathcal{E}}_{\mathcal{A}} {\sim}_{h} {\mathcal{E}}_{\mathcal{B}}$
to denote that $ {\mathcal{E}}_{\mathcal{A}}$ is homomorphic
to ${\mathcal{E}}_{\mathcal{B}}$. In other words, the following
diagram
\begin{center}
\begin{tikzpicture}
  \matrix (m) [matrix of math nodes,row sep=3em,column sep=3em,minimum width=2em]
  {
     \mathcal{A} & \mathcal{S} \\
     \mathcal{B} &  \\};
  \path[-stealth]
    (m-1-1) edge node [left] {$F$} (m-2-1)
            edge node [above] {$U_{\mathcal{A}}$} (m-1-2)
    (m-2-1) edge node [below] {$U_{\mathcal{B}}$}(m-1-2);
\end{tikzpicture}
\end{center}
commutes.
Sometimes we write $F: {\mathcal{E}}_{\mathcal{A}}\longrightarrow
{\mathcal{E}}_{\mathcal{B}}$ meaning the existence of a homomorphism
$F:\mathcal{A} \longrightarrow\mathcal{B}$.
\end{definition}

It is important to observe that our definition of homomorphism is
similar to that of concrete functor (see Definition 5.9 in
\cite{strecker:1990}). However, there exist a crucial differences between
them. Since the generalized underlying functor (see
Definition~\ref{defnew9}) is not necessary faithful,
it follows that a homomorphism between emergences (according to
Definition~\ref{defnew14A}) is not always faithful, in
contrast with a concrete functor (see Proposition 5.10 (1) in
\cite{strecker:1990}). In fact, our definition generalizes the
former.

\begin{proposition}\label{homoembed}
Let ${\mathcal{E}}_{\mathcal{A}}$ and ${\mathcal{E}}_{\mathcal{B}}$
be emergences. Assume that
$F, G: {\mathcal{E}}_{\mathcal{A}}\longrightarrow {\mathcal{E}}_{\mathcal{B}}$
are homomorphisms. If $U_{\mathcal{B}}$ is embedding then $F=G$.
\end{proposition}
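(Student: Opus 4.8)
The plan is to exploit the two defining equations of the homomorphisms, namely $U_{\mathcal{B}} \circ F = U_{\mathcal{A}}$ and $U_{\mathcal{B}} \circ G = U_{\mathcal{A}}$, which together force $U_{\mathcal{B}} \circ F = U_{\mathcal{B}} \circ G$. Since a functor is determined by its action on objects together with its action on morphisms, I would establish these two agreements separately and then conclude $F=G$. The hypothesis that $U_{\mathcal{B}}$ is \emph{embedding}, i.e.\ injective on the whole class $\operatorname{Mor}(\mathcal{B})$, is precisely what allows me to cancel $U_{\mathcal{B}}$ on the left.

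First I would treat the morphisms. Fix an arbitrary $\mathcal{A}$-morphism $f : A \longrightarrow A^{*}$. Applying the two defining equations to $f$ gives $U_{\mathcal{B}}(F(f)) = U_{\mathcal{A}}(f) = U_{\mathcal{B}}(G(f))$, hence $U_{\mathcal{B}}(F(f)) = U_{\mathcal{B}}(G(f))$. As $U_{\mathcal{B}}$ is injective on morphisms, this yields $F(f) = G(f)$. Thus $F$ and $G$ coincide on every $\mathcal{A}$-morphism.

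Next I would recover agreement on objects from agreement on identities. For each $\mathcal{A}$-object $A$, the previous step applied to $id_{A}$ gives $F(id_{A}) = G(id_{A})$; since functors preserve identities, $F(id_{A}) = id_{F(A)}$ and $G(id_{A}) = id_{G(A)}$, whence $id_{F(A)} = id_{G(A)}$. Because the $\operatorname{hom}$-sets of $\mathcal{B}$ are pairwise disjoint (part of the definition of a category), a single morphism cannot lie in two distinct hom-sets, so $id_{F(A)} = id_{G(A)}$ forces $F(A) = G(A)$. Hence $F$ and $G$ coincide on objects as well, and therefore $F = G$.

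The main point to get right is the object assignment: cancelling $U_{\mathcal{B}}$ delivers equality of the morphism parts of $F$ and $G$, but one must not simply assume this settles the object parts. The identity-morphism argument, combined with the pairwise disjointness of the $\operatorname{hom}$-sets, is exactly what bridges this gap, and it is here that the full strength of \emph{embedding} --- injectivity on the entire class $\operatorname{Mor}(\mathcal{B})$ rather than merely on each hom-set --- is essential. Indeed, mere faithfulness of $U_{\mathcal{B}}$ would not suffice: distinct $\mathcal{B}$-objects may share the same underlying set and the same image of their identities, allowing $F$ and $G$ to differ on objects while still satisfying $U_{\mathcal{B}} \circ F = U_{\mathcal{B}} \circ G$.
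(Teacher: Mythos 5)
Your proof is correct and follows essentially the same route as the paper: both cancel $U_{\mathcal{B}}$ from $U_{\mathcal{B}}\circ F = U_{\mathcal{A}} = U_{\mathcal{B}}\circ G$ using the embedding hypothesis. The only difference is that the paper simply invokes the standard fact that an embedding is faithful and injective on objects, whereas you derive the object-level agreement explicitly from the identity morphisms and the pairwise disjointness of the $\operatorname{hom}$-sets --- a worthwhile detail, but not a different argument.
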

\begin{proof}
Since $F$ and $G$ are homomorphisms, it follows that $U_{\mathcal{B}}
\circ F=U_{\mathcal{A}}$ and $U_{\mathcal{B}}\circ G =
U_{\mathcal{A}}$; so $U_{\mathcal{B}}\circ F = U_{\mathcal{B}}\circ G$.
Because $U_{\mathcal{B}}$ is both faithful and injective on objects,
the result follows. The proof is complete.
\end{proof}

\begin{proposition}\label{prophomom}
Let $ {\mathcal{E}}_{\mathcal{A}}= ( \mathcal{A}, e_{\mathcal{A}}, U_{\mathcal{A}} )$,
$ {\mathcal{E}}_{\mathcal{B}}= ( \mathcal{B}, e_{\mathcal{B}}, U_{\mathcal{B}} )$ and
$ {\mathcal{E}}_{\mathcal{C}}= ( \mathcal{C}, e_{\mathcal{C}}, U_{\mathcal{C}} )$ be emergences.
Then the following holds:
\begin{itemize}
\item [ $\operatorname{(1)}$] the relation ${\sim}_{h}$ is reflexive, that is,
$ {\mathcal{E}}_{\mathcal{A}}{\sim}_{h}{\mathcal{E}}_{\mathcal{A}}$ for all
emergences ${\mathcal{E}}_{\mathcal{A}}$;

\item [ $\operatorname{(2)}$] the relation ${\sim}_{h}$ is transitive:
if $ {\mathcal{E}}_{\mathcal{A}} {\sim}_{h} {\mathcal{E}}_{\mathcal{B}}$ and
${\mathcal{E}}_{\mathcal{B}} {\sim}_{h} {\mathcal{E}}_{\mathcal{C}}$ then
$ {\mathcal{E}}_{\mathcal{A}} {\sim}_{h} {\mathcal{E}}_{\mathcal{C}}$.
\end{itemize}
\end{proposition}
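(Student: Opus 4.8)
The plan is to prove both parts by directly exhibiting a witnessing functor in each case and verifying the defining equation $U_{(-)}\circ F = U_{(-)}$ of Definition~\ref{defnew14A}. For part $(1)$, the natural candidate is the identity functor ${id}_{\mathcal{A}}:\mathcal{A}\longrightarrow\mathcal{A}$ recalled in Section~\ref{sec3}. Since composing any functor with the identity functor returns the same functor, we have $U_{\mathcal{A}}\circ {id}_{\mathcal{A}}=U_{\mathcal{A}}$ immediately, so ${id}_{\mathcal{A}}$ is a homomorphism from ${\mathcal{E}}_{\mathcal{A}}$ to itself and reflexivity holds.

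For part $(2)$, assume ${\mathcal{E}}_{\mathcal{A}}{\sim}_{h}{\mathcal{E}}_{\mathcal{B}}$ and ${\mathcal{E}}_{\mathcal{B}}{\sim}_{h}{\mathcal{E}}_{\mathcal{C}}$. By Definition~\ref{defnew14A} there exist functors $F:\mathcal{A}\longrightarrow\mathcal{B}$ and $G:\mathcal{B}\longrightarrow\mathcal{C}$ satisfying $U_{\mathcal{B}}\circ F=U_{\mathcal{A}}$ and $U_{\mathcal{C}}\circ G=U_{\mathcal{B}}$. I would take $G\circ F$ as the witnessing homomorphism from ${\mathcal{E}}_{\mathcal{A}}$ to ${\mathcal{E}}_{\mathcal{C}}$. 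First recall from the composition-of-functors result stated in Section~\ref{sec3} that $G\circ F:\mathcal{A}\longrightarrow\mathcal{C}$ is again a functor. Then, using associativity of composition of functors,
$$U_{\mathcal{C}}\circ(G\circ F)=(U_{\mathcal{C}}\circ G)\circ F=U_{\mathcal{B}}\circ F=U_{\mathcal{A}}.$$
Hence $G\circ F$ is a homomorphism and ${\mathcal{E}}_{\mathcal{A}}{\sim}_{h}{\mathcal{E}}_{\mathcal{C}}$, establishing transitivity.

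The only points requiring any care are the two facts invoked in the transitivity step, namely that $G\circ F$ is genuinely a functor and that functor composition is associative; both are standard and already recorded in the preliminaries, so I anticipate no real obstacle. Essentially the argument transports the usual preorder structure on ``objects together with structure-compatible maps'' into the present setting. I would also note, as a sanity check on the statement, that the proposition deliberately omits symmetry: a homomorphism $F$ need not be invertible and Definition~\ref{defnew14A} imposes no reversibility on the equation $U_{\mathcal{B}}\circ F=U_{\mathcal{A}}$, so ${\sim}_{h}$ is a \emph{preorder} rather than an equivalence relation, which is exactly why only reflexivity and transitivity are asserted.
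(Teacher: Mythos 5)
Your proposal is correct and follows essentially the same route as the paper's own proof: reflexivity via the identity functor and transitivity via the composite $G\circ F$ together with associativity of functor composition. The only difference is cosmetic — your write-up is slightly more careful in labelling the composite (the paper's proof momentarily writes the composite in the wrong order before concluding correctly), and your closing remark on why symmetry fails matches the paper's subsequent remark.
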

\begin{proof}
$\operatorname{(1)}$ In order to show Item~1, note that, given an
emergence ${\mathcal{E}}_{\mathcal{A}}$, the identity functor
${Id}_{\mathcal{A}}: \mathcal{A}\longrightarrow \mathcal{A}$ satisfies
$U_{\mathcal{A}}\circ {Id}_{\mathcal{A}} =U_{\mathcal{A}}$ so
${\mathcal{E}}_{\mathcal{A}}{\sim}_{h}{\mathcal{E}}_{\mathcal{A}}$.\\
$\operatorname{(2)}$ Assume that $ {\mathcal{E}}_{\mathcal{A}}
{\sim}_{h} {\mathcal{E}}_{\mathcal{B}}$ and
${\mathcal{E}}_{\mathcal{B}}{\sim}_{h} {\mathcal{E}}_{\mathcal{C}}$.
Thus, there exist functors $T_1 :\mathcal{A}\longrightarrow
\mathcal{B}$ and $T_2 :\mathcal{B}\longrightarrow\mathcal{C}$
such that $U_{\mathcal{B}}\circ T_1 = U_{\mathcal{A}}$ and
$U_{\mathcal{C}}\circ T_2 = U_{\mathcal{B}}$. We know that
$T_1 \circ T_2$ is also a functor. Moreover, we have
$[U_{\mathcal{C}}\circ T_2] \circ T_1 = U_{\mathcal{B}}\circ T_1 =
{U}_{\mathcal{A}}$. Therefore, $ {\mathcal{E}}_{\mathcal{A}}
{\sim}_{h}{\mathcal{E}}_{\mathcal{C}}$ by means of the functor
$T_2 \circ T_1$.
\end{proof}

\begin{remark}
Note that although the relation ${\sim}_{h}$ is reflexive and transitive
it is not necessarily symmetric.
\end{remark}

We next introduce the concept of \emph{equivalence} of emergences. Recall
that a functor $F:\mathcal{A} \longrightarrow\mathcal{B}$ is called
\emph{isomorphism-dense} if for every $\mathcal{B}$-object $B$ there exists an
$\mathcal{A}$-object $A$ such that $F(A)$ is isomorphic to $B$.

\begin{definition}\label{defequivemer}
Let $ {\mathcal{E}}_{\mathcal{A}}= ( \mathcal{A}, e_{\mathcal{A}},
U_{\mathcal{A}} )$ and $ {\mathcal{E}}_{\mathcal{B}}=
( \mathcal{B}, e_{\mathcal{B}}, U_{\mathcal{B}} )$ be two emergences. A
homomorphism $F:\mathcal{A}\longrightarrow \mathcal{B}$ is said to
be an equivalence from ${\mathcal{E}}_{\mathcal{A}}$ to
${\mathcal{E}}_{\mathcal{B}}$ if $F$ is full, faithful, and
isomorphism-dense. If there exists an equivalence from
$\mathcal{A}$ to $\mathcal{B}$ we say that
${\mathcal{E}}_{\mathcal{A}}$ is equivalent to
${\mathcal{E}}_{\mathcal{B}}$.
\end{definition}

The next result shows that equivalences of emergences are
reflexive and transitive.

\begin{proposition}\label{propequivaemrge}
The equivalence among emergences is a relation reflexive and
transitive in the cartesian product of
the conglomerate of all categories and the conglomerate
of all functors.
\end{proposition}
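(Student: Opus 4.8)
The plan is to prove reflexivity and transitivity separately, mirroring the structure of Proposition~\ref{prophomom} but now with the stronger requirement that the witnessing homomorphisms are equivalences, i.e. full, faithful, and isomorphism-dense.

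For reflexivity, I would take the identity functor $\mathrm{Id}_{\mathcal{A}}:\mathcal{A}\longrightarrow\mathcal{A}$. By Proposition~\ref{prophomom}~(1) it is already a homomorphism, since $U_{\mathcal{A}}\circ\mathrm{Id}_{\mathcal{A}}=U_{\mathcal{A}}$. It remains to check that it is full, faithful, and isomorphism-dense. Each hom-set restriction $\mathrm{Id}_{\mathcal{A}}:\operatorname{hom}(A,A^{*})\longrightarrow\operatorname{hom}(A,A^{*})$ is the identity map on the hom-set, hence bijective, giving both fullness and faithfulness at once; and for isomorphism-density, every $\mathcal{A}$-object $B$ satisfies $\mathrm{Id}_{\mathcal{A}}(B)=B$, which is trivially isomorphic to $B$ via $\mathrm{id}_{B}$. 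Thus $\mathcal{A}$ is equivalent to itself.

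For transitivity, suppose $F:{\mathcal{E}}_{\mathcal{A}}\longrightarrow{\mathcal{E}}_{\mathcal{B}}$ and $G:{\mathcal{E}}_{\mathcal{B}}\longrightarrow{\mathcal{E}}_{\mathcal{C}}$ are equivalences. By the transitivity argument already given in Proposition~\ref{prophomom}~(2), the composite $G\circ F:\mathcal{A}\longrightarrow\mathcal{C}$ is a homomorphism, since $U_{\mathcal{C}}\circ(G\circ F)=(U_{\mathcal{C}}\circ G)\circ F=U_{\mathcal{B}}\circ F=U_{\mathcal{A}}$. The work is to verify that $G\circ F$ inherits the three defining properties of an equivalence. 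Faithfulness and fullness follow because a composite of faithful (respectively full) functors is faithful (respectively full): on each hom-set the restriction of $G\circ F$ factors as the composite of the restriction of $F$ followed by that of $G$, and a composite of injective (respectively surjective) maps is injective (respectively surjective).

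The main obstacle will be isomorphism-density of the composite, since this is the only property that does not compose by a purely formal set-theoretic argument. Given a $\mathcal{C}$-object $C$, isomorphism-density of $G$ yields a $\mathcal{B}$-object $B$ with $G(B)\cong C$, and isomorphism-density of $F$ yields an $\mathcal{A}$-object $A$ with $F(A)\cong B$. The key step is then to transport the second isomorphism through $G$: since functors preserve isomorphisms, $F(A)\cong B$ gives $G(F(A))\cong G(B)$, and composing with $G(B)\cong C$ (using that isomorphism of objects is transitive, as the composite of two isomorphisms is again an isomorphism) produces $(G\circ F)(A)\cong C$. Hence $G\circ F$ is isomorphism-dense, completing the verification that it is an equivalence and establishing transitivity.
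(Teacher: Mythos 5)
Your proof is correct and follows essentially the same route as the paper: reflexivity via the identity functor, and transitivity by observing that the composite is a homomorphism (via Proposition~\ref{prophomom}) and then that a composite of equivalences is an equivalence. The only difference is that where the paper simply cites Proposition~3.36 of \cite{strecker:1990} for that last fact, you prove it inline, including the correct transport of the isomorphism $F(A)\cong B$ through $G$ for isomorphism-density.
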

\begin{proof}
Let $ {\mathcal{E}}_{\mathcal{A}}= ( \mathcal{A}, e_{\mathcal{A}},
U_{\mathcal{A}} )$ be an emergence. To prove the reflexivity,
it suffices to consider the identity functor ${Id}_{\mathcal{A}}:\mathcal{A}
\longrightarrow\mathcal{A}$.

To prove the transitivity, assume that $ {\mathcal{E}}_{\mathcal{A}}$ is
equivalent to $ {\mathcal{E}}_{\mathcal{B}}$ and
$ {\mathcal{E}}_{\mathcal{B}}$ is equivalent to
${\mathcal{E}}_{\mathcal{C}}$ with functors $T_1$ and $T_2$, respectively.
From Proposition~\ref{prophomom}, it follows
that $T_{2}\circ T_1$ is a homomorphism from
${\mathcal{E}}_{\mathcal{A}}$ to
${\mathcal{E}}_{\mathcal{C}}$. Proposition~3.36 in
\cite{strecker:1990} implies that the composite
$T_{2}\circ T_1 :\mathcal{A}\longrightarrow\mathcal{C}$ is also an
equivalence. Hence, the results follows.
\end{proof}

Note that the equivalence among emergences is not necessarily symmetric.
In the following, we introduce the definition of \emph{isomorphism}
for emergences.

\begin{definition}\label{defnew14}
Let $ {\mathcal{E}}_{\mathcal{A}}= ( \mathcal{A}, e_{\mathcal{A}},
U_{\mathcal{A}} )$ and $ {\mathcal{E}}_{\mathcal{B}}= ( \mathcal{B},
e_{\mathcal{B}}, U_{\mathcal{B}} )$ be two emergences.
We say that ${\mathcal{E}}_{\mathcal{A}}$ and ${\mathcal{E}}_{\mathcal{B}}$
are isomorphic, written ${\mathcal{E}}_{\mathcal{A}} {\cong}_{e}
{\mathcal{E}}_{\mathcal{B}}$, if:
\begin{itemize}
\item [ $\operatorname{(1)}$] $\mathcal{A}$ is isomorphic to $\mathcal{B}$
as categories;

\item [ $\operatorname{(2)}$] there exists an isomorphism $F:\mathcal{A}
\longrightarrow \mathcal{B}$ such that $U_{\mathcal{B}} \circ F =
U_{\mathcal{A}}$.
\end{itemize}
In other words, the following diagram
\begin{center}
\begin{tikzpicture}
  \matrix (m) [matrix of math nodes,row sep=3em,column sep=3em,minimum width=2em]
  {
     \mathcal{A} & \mathcal{S} \\
     \mathcal{B} &  \\};
  \path[-stealth]
    (m-1-1) edge node [left] {$F$} (m-2-1)
    (m-1-1) edge node [right] {${\cong}_{e}$} (m-2-1)
            edge node [above] {$U_{\mathcal{A}}$} (m-1-2)
    (m-2-1) edge node [below] {$U_{\mathcal{B}}$}(m-1-2);
\end{tikzpicture}
\end{center} commutes.
\end{definition}

\begin{proposition}\label{isoehequiv}
Let ${\mathcal{E}}_{\mathcal{A}}$ and ${\mathcal{E}}_{\mathcal{B}}$ be
two emergences. If ${\mathcal{E}}_{\mathcal{A}} {\cong}_{e}
{\mathcal{E}}_{\mathcal{B}}$ then both ${\mathcal{E}}_{\mathcal{A}}$ and
${\mathcal{E}}_{\mathcal{B}}$ are equivalent to each other.
\end{proposition}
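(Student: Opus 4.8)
The plan is to use the isomorphism functor supplied by Definition~\ref{defnew14} directly as the required equivalence, exploiting the elementary fact that every isomorphism of categories is simultaneously full, faithful, and isomorphism-dense. By hypothesis there is a functor $F:\mathcal{A}\longrightarrow\mathcal{B}$ together with an inverse $G:\mathcal{B}\longrightarrow\mathcal{A}$ satisfying $G\circ F={id}_{\mathcal{A}}$, $F\circ G={id}_{\mathcal{B}}$, and $U_{\mathcal{B}}\circ F=U_{\mathcal{A}}$. The last identity already says that $F$ is a homomorphism ${\mathcal{E}}_{\mathcal{A}}\longrightarrow{\mathcal{E}}_{\mathcal{B}}$ in the sense of Definition~\ref{defnew14A}, so only the three properties of Definition~\ref{defequivemer} remain to be verified.

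First I would check these three properties for $F$. Faithfulness and fullness both follow from the existence of the two-sided inverse $G$: if $F(f)=F(f')$, then applying $G$ and using $G\circ F={id}_{\mathcal{A}}$ yields $f=f'$, so each hom-set restriction of $F$ is injective; and for any $\mathcal{B}$-morphism $g:F(A)\longrightarrow F(A')$ the morphism $G(g):A\longrightarrow A'$ satisfies $F(G(g))=g$ by $F\circ G={id}_{\mathcal{B}}$, so each hom-set restriction is surjective. Isomorphism-density is immediate, since for every $\mathcal{B}$-object $B$ we have $F(G(B))=B$, which is in particular isomorphic to $B$. Hence $F$ is an equivalence from ${\mathcal{E}}_{\mathcal{A}}$ to ${\mathcal{E}}_{\mathcal{B}}$.

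It remains to produce an equivalence in the opposite direction, and here the inverse functor $G$ is the natural candidate. The one point requiring attention is that $G$ must first be shown to be a homomorphism, i.e.\ that $U_{\mathcal{A}}\circ G=U_{\mathcal{B}}$; this is obtained by composing the identity $U_{\mathcal{B}}\circ F=U_{\mathcal{A}}$ on the right with $G$ and simplifying $F\circ G$ to ${id}_{\mathcal{B}}$. Once this is in place, the very same argument as above---now applied to $G$ with inverse $F$---shows that $G$ is full, faithful, and isomorphism-dense, so $G$ is an equivalence from ${\mathcal{E}}_{\mathcal{B}}$ to ${\mathcal{E}}_{\mathcal{A}}$, giving both required directions. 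Neither direction presents genuine difficulty, so the proof is essentially bookkeeping; the only step where one must be slightly careful is checking that the commuting triangle is preserved upon passing to the inverse functor $G$, which is exactly the computation $U_{\mathcal{A}}\circ G=U_{\mathcal{B}}$ just described.
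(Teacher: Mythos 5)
Your proof is correct and is exactly the verification the paper has in mind when it declares the result ``immediate'' and omits the argument: the isomorphism $F$ of Definition~\ref{defnew14} is automatically full, faithful, and isomorphism-dense, and the inverse $G$ inherits the homomorphism condition via $U_{\mathcal{A}}\circ G=U_{\mathcal{B}}\circ F\circ G=U_{\mathcal{B}}$. No gaps; your write-up simply supplies the bookkeeping the paper chose not to print.
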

\begin{proof}
The proof is immediate, so it is omitted.
\end{proof}

In the case in which the respective generalized underlying
functors of two homomorphic emergences are
embeddings, one has an isomorphism between them.

\begin{proposition}\label{prophomoiso}
Let $ {\mathcal{E}}_{\mathcal{A}}= ( \mathcal{A}, e_{\mathcal{A}},
U_{\mathcal{A}} )$ and $ {\mathcal{E}}_{\mathcal{B}}=
(\mathcal{B}, e_{\mathcal{B}}, U_{\mathcal{B}} )$ be two emergences such that
${\mathcal{E}}_{\mathcal{A}} {\sim}_{h} {\mathcal{E}}_{\mathcal{B}}$ and
${\mathcal{E}}_{\mathcal{B}} {\sim}_{h} {\mathcal{E}}_{\mathcal{A}}$.
If $U_{\mathcal{A}}$ and $U_{\mathcal{B}}$ are both embeddings, then
$ {\mathcal{E}}_{\mathcal{A}} {\cong}_{e} {\mathcal{E}}_{\mathcal{B}}$.
\end{proposition}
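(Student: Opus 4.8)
The plan is to promote the two homomorphisms supplied by the hypotheses into a pair of mutually inverse functors, after which both clauses of Definition~\ref{defnew14} fall out immediately. First I would unwind $\mathcal{E}_{\mathcal{A}}\sim_h\mathcal{E}_{\mathcal{B}}$ and $\mathcal{E}_{\mathcal{B}}\sim_h\mathcal{E}_{\mathcal{A}}$ to extract functors $F\colon\mathcal{A}\longrightarrow\mathcal{B}$ and $G\colon\mathcal{B}\longrightarrow\mathcal{A}$ satisfying $U_{\mathcal{B}}\circ F=U_{\mathcal{A}}$ and $U_{\mathcal{A}}\circ G=U_{\mathcal{B}}$.

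The crux is to recognize that each round-trip composite is a homomorphism of an emergence to itself. Indeed, $U_{\mathcal{A}}\circ(G\circ F)=(U_{\mathcal{A}}\circ G)\circ F=U_{\mathcal{B}}\circ F=U_{\mathcal{A}}$, so $G\circ F\colon\mathcal{A}\longrightarrow\mathcal{A}$ is a homomorphism $\mathcal{E}_{\mathcal{A}}\to\mathcal{E}_{\mathcal{A}}$; and ${id}_{\mathcal{A}}$ is trivially another such homomorphism. I would then apply Proposition~\ref{homoembed} with both source and target taken to be $\mathcal{E}_{\mathcal{A}}$ --- this is legitimate because that proposition holds for any pair of emergences and only asks that the target's underlying functor be an embedding, which here is exactly $U_{\mathcal{A}}$. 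Hence $G\circ F={id}_{\mathcal{A}}$. A symmetric computation gives $U_{\mathcal{B}}\circ(F\circ G)=U_{\mathcal{B}}$, and a second application of Proposition~\ref{homoembed}, now using that $U_{\mathcal{B}}$ is an embedding, yields $F\circ G={id}_{\mathcal{B}}$.

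With both composites equal to the relevant identities, $F$ is an isomorphism of categories with inverse $G$, which is precisely clause~(1) of Definition~\ref{defnew14}; the equality $U_{\mathcal{B}}\circ F=U_{\mathcal{A}}$ already in hand is clause~(2). Therefore $\mathcal{E}_{\mathcal{A}}\cong_e\mathcal{E}_{\mathcal{B}}$. I do not expect a genuine obstacle here, since the argument is just Proposition~\ref{homoembed} invoked twice; the only points demanding care are to feed the correct emergence as the ``target'' in each invocation (the embedding hypothesis is pinned to the target's underlying functor) and to note that an embedding, being injective on morphisms, is automatically injective on objects as well --- distinct objects carry distinct identity morphisms lying in disjoint hom-sets --- so that the conclusion $F=G$ of Proposition~\ref{homoembed} fixes the functor on objects too, not merely on morphisms.
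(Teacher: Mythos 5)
Your proposal is correct and follows essentially the same route as the paper: extract the two homomorphisms, show each round-trip composite agrees with the identity by cancelling the embedding underlying functor, and conclude that the two clauses of Definition~\ref{defnew14} hold. The only cosmetic difference is that you invoke Proposition~\ref{homoembed} to perform the cancellation, whereas the paper re-derives the same faithfulness-plus-injectivity-on-objects argument inline.
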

\begin{proof}
From hypotheses, there exists a homomorphism
$F_1 : \mathcal{A}\longrightarrow\mathcal{B}$ such that $U_{\mathcal{B}}
\circ F_1 = U_{\mathcal{A}}$. We must show that $F_1$ is an isomorphism from
$\mathcal{A}$ to $\mathcal{B}$.
Again, from hypotheses, there exists a functor $F_2 : \mathcal{B}
\longrightarrow\mathcal{A}$ such that
$U_{\mathcal{A}}\circ F_2 = U_{\mathcal{B}}$. Thus $U_{\mathcal{A}}
\circ (F_2 \circ F_1 ) = U_{\mathcal{B}}
\circ F_1 = U_{\mathcal{A}}=U_{\mathcal{A}}
\circ {Id}_{\mathcal{A}}$. Since $U_{\mathcal{A}}$ is embedding,
it follows that $U_{\mathcal{A}}$ is also faithful and
injective on objects; thus $F_2 \circ F_1 = {Id}_{\mathcal{A}}$. On the other
hand we have $U_{\mathcal{B}}\circ (F_1 \circ F_2 )= U_{\mathcal{A}}
\circ F_2 = U_{\mathcal{B}}$. As $U_{\mathcal{B}}$ is also an embedding,
one has $F_1 \circ F_2 = {Id}_{\mathcal{B}}$.
Thus, $F_1$ is an isomorphism from $\mathcal{A}$ to $\mathcal{B}$.
Therefore, $ {\mathcal{E}}_{\mathcal{A}} {\cong}_{e} {\mathcal{E}}_{\mathcal{B}}$,
as required. The proof is complete.
\begin{eqnarray*}
\begin{tikzpicture}
  \matrix (m) [matrix of math nodes,row sep=3em,column sep=3em,minimum width=2em]
  {
     \mathcal{A} & \mathcal{S} \\
     \mathcal{B} &  \\};
  \path[-stealth]
    (m-1-1) edge node [left] {$F_1$} (m-2-1)
            edge node [above] {$U_{\mathcal{A}}$} (m-1-2)
    (m-2-1) edge node [below] {$U_{\mathcal{B}}$}(m-1-2);
\end{tikzpicture}
\quad
\begin{tikzpicture}
  \matrix (m) [matrix of math nodes,row sep=3em,column sep=3em,minimum width=2em]
  {
     \mathcal{A} & \mathcal{S} \\
     \mathcal{B} &  \\};
  \path[-stealth]
   (m-1-1)  edge node [above] {$U_{\mathcal{A}}$} (m-1-2)
   (m-2-1) edge node [left] {$F_2$} (m-1-1)
   (m-2-1) edge node [below] {$U_{\mathcal{B}}$}(m-1-2);
\end{tikzpicture}
\end{eqnarray*}
\end{proof}

\begin{proposition}\label{newteo15}
The isomorphism among emergences ${\cong}_{e}$ is an equivalence
relation in the cartesian product of the conglomerate of
all categories and the conglomerates of all functors between constructs.
\end{proposition}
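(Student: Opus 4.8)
The plan is to verify directly from Definition~\ref{defnew14} that $\cong_e$ satisfies the three defining properties of an equivalence relation: reflexivity, symmetry, and transitivity. In each case the category-isomorphism condition (1) and the commutation condition (2) of that definition must be checked simultaneously, and the whole argument rests on the elementary fact that category isomorphisms are closed under taking identities, inverses, and composites.

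First I would dispatch reflexivity and transitivity, since these closely mirror the proofs already given for $\sim_h$ in Proposition~\ref{prophomom} and for equivalence in Proposition~\ref{propequivaemrge}. For reflexivity, the identity functor ${Id}_{\mathcal{A}}:\mathcal{A}\longrightarrow\mathcal{A}$ is an isomorphism and satisfies $U_{\mathcal{A}}\circ{Id}_{\mathcal{A}}=U_{\mathcal{A}}$, so ${\mathcal{E}}_{\mathcal{A}}\cong_e{\mathcal{E}}_{\mathcal{A}}$. For transitivity, suppose ${\mathcal{E}}_{\mathcal{A}}\cong_e{\mathcal{E}}_{\mathcal{B}}$ via an isomorphism $F_1:\mathcal{A}\longrightarrow\mathcal{B}$ with $U_{\mathcal{B}}\circ F_1=U_{\mathcal{A}}$, and ${\mathcal{E}}_{\mathcal{B}}\cong_e{\mathcal{E}}_{\mathcal{C}}$ via an isomorphism $F_2:\mathcal{B}\longrightarrow\mathcal{C}$ with $U_{\mathcal{C}}\circ F_2=U_{\mathcal{B}}$. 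The composite $F_2\circ F_1$ is again a functor, and being a composite of category isomorphisms it is itself an isomorphism; the commutation $U_{\mathcal{C}}\circ(F_2\circ F_1)=(U_{\mathcal{C}}\circ F_2)\circ F_1=U_{\mathcal{B}}\circ F_1=U_{\mathcal{A}}$ then yields ${\mathcal{E}}_{\mathcal{A}}\cong_e{\mathcal{E}}_{\mathcal{C}}$.

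The step that distinguishes $\cong_e$ from the merely reflexive-and-transitive relation of Proposition~\ref{propequivaemrge} is symmetry, and this is precisely where the invertibility of the witnessing functor is essential. Assuming ${\mathcal{E}}_{\mathcal{A}}\cong_e{\mathcal{E}}_{\mathcal{B}}$ via an isomorphism $F:\mathcal{A}\longrightarrow\mathcal{B}$ with $U_{\mathcal{B}}\circ F=U_{\mathcal{A}}$, I would take the inverse functor $G:\mathcal{B}\longrightarrow\mathcal{A}$ satisfying $G\circ F={Id}_{\mathcal{A}}$ and $F\circ G={Id}_{\mathcal{B}}$, which exists precisely because $F$ is an isomorphism and is itself an isomorphism. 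The required commutation then follows by postcomposing the hypothesis with $G$ on the right: $U_{\mathcal{A}}\circ G=(U_{\mathcal{B}}\circ F)\circ G=U_{\mathcal{B}}\circ(F\circ G)=U_{\mathcal{B}}\circ{Id}_{\mathcal{B}}=U_{\mathcal{B}}$, so $G$ witnesses ${\mathcal{E}}_{\mathcal{B}}\cong_e{\mathcal{E}}_{\mathcal{A}}$.

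I expect no real obstacle in this argument; the only point demanding care is symmetry, where one must confirm both that the inverse $G$ is genuinely a category isomorphism (so condition (1) of Definition~\ref{defnew14} holds in the reversed direction) and that the commutation condition transfers correctly, the latter being guaranteed by associativity of functor composition together with the inverse identities for $F$. Having verified all three properties, $\cong_e$ is an equivalence relation on the cartesian product of the conglomerate of all categories and the conglomerate of all functors between constructs, as claimed.
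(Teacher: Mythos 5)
Your proposal is correct and follows essentially the same route as the paper: reflexivity via the identity functor, symmetry via the inverse functor $F^{-1}$ (your computation $U_{\mathcal{A}}\circ G = U_{\mathcal{B}}$ is exactly the paper's $U_{\mathcal{B}} = U_{\mathcal{A}}\circ F^{-1}$), and transitivity via composition of isomorphisms. No gaps; if anything, your treatment of symmetry is slightly more explicit than the paper's.
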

\begin{proof}
Let $ {\mathcal{E}}_{\mathcal{A}}= ( \mathcal{A}, e_{\mathcal{A}},
U_{\mathcal{A}} )$ be an emergence. We first show that
${\mathcal{E}}_{\mathcal{A}}{\cong}_{e} {\mathcal{E}}_{\mathcal{A}}$. It is clear
that $\mathcal{A}\cong \mathcal{A}$ as categories. Moreover,
${id}_{\mathcal{A}}: \mathcal{A}\longrightarrow \mathcal{A}$ is
an isomorphism and $U_{\mathcal{A}} \circ {id}_{\mathcal{A}}=
U_{\mathcal{A}}$. So ${\cong}_{e}$ is reflexive.

Assume that
${\mathcal{E}}_{\mathcal{A}}= ( \mathcal{A}, e_{\mathcal{A}},
U_{\mathcal{A}} )$ and $ {\mathcal{E}}_{\mathcal{B}}=
( \mathcal{B}, e_{\mathcal{B}}, U_{\mathcal{B}} )$
are two emergences such that
${\mathcal{E}}_{\mathcal{A}}{\cong}_{e}{\mathcal{E}}_{\mathcal{B}}$.
Thus $\mathcal{A}\cong \mathcal{B}$ as categories implies
that $\mathcal{B}\cong \mathcal{A}$ as categories.
Since there exists an isomorphism $F: \mathcal{A}
\longrightarrow\mathcal{B}$ from $\mathcal{A}$ to $\mathcal{B}$
satisfying $U_{\mathcal{B}}\circ F =
U_{\mathcal{A}}$, it follows that $U_{\mathcal{B}}=U_{\mathcal{A}}
\circ F^{-1}$. Hence ${\cong}_{e}$ is symmetric.

To prove the transitivity, let us consider that ${\mathcal{E}}_{\mathcal{A}}$,
${\mathcal{E}}_{\mathcal{B}}$ and ${\mathcal{E}}_{\mathcal{C}}$
are emergences, with ${\mathcal{E}}_{\mathcal{A}} {\cong}_{e}
{\mathcal{E}}_{\mathcal{B}}$ and ${\mathcal{E}}_{\mathcal{B}} {\cong}_{e}
{\mathcal{E}}_{\mathcal{C}}$. It is clear that $\mathcal{A} \cong
\mathcal{C}$ as categories. Additionally, we know that there exist
homomorphisms $F :\mathcal{A}\longrightarrow \mathcal{B}$ and $G:\mathcal{B}
\longrightarrow\mathcal{C}$ such that $U_{\mathcal{B}} \circ F =
U_{\mathcal{A}}$ and  $U_{\mathcal{C}}\circ G = U_{\mathcal{B}}$. Thus,
we have $(U_{\mathcal{C}} \circ G) \circ F = U_{\mathcal{A}}\Longrightarrow
U_{\mathcal{C}} \circ (G \circ F) = U_{\mathcal{A}}$. Since $G\circ
F:\mathcal{A}\longrightarrow \mathcal{C}$ is also an isomorphism, it
follows that the relation ${\cong}_{e}$ is transitive. This completes
the proof.
\end{proof}

We next define the concept of \emph{strong homomorphism}. As the name
proposed, it is more powerful to correlate emergences than
homomorphisms.

\begin{definition}\label{defnew14AS}
Let $ {\mathcal{E}}_{\mathcal{A}}= ( \mathcal{A}, e_{\mathcal{A}},
U_{\mathcal{A}} )$ and $ {\mathcal{E}}_{\mathcal{B}}=
(\mathcal{B}, e_{\mathcal{B}}, U_{\mathcal{B}} )$ be two emergences. We
say that ${\mathcal{E}}_{\mathcal{A}}$ is strong homomorphic to
${\mathcal{E}}_{\mathcal{B}}$ if there exists a functor
$F:\mathcal{A} \longrightarrow\mathcal{B}$ (called strong
homomorphism) such that $U_{\mathcal{B}} \circ F = U_{\mathcal{A}}$
and $|e_{\mathcal{A}}|= |e_{\mathcal{B}}|$. We write
${\mathcal{E}}_{\mathcal{A}}{\sim}_{h}^{st} {\mathcal{E}}_{\mathcal{B}}$
to denote that $ {\mathcal{E}}_{\mathcal{A}}$ is strong
homomorphic to ${\mathcal{E}}_{\mathcal{B}}$.
\end{definition}

The following result is a natural consequence of the previous definition.

\begin{proposition}\label{prophomomS}
The relation ${\sim}_{h}^{st}$ is reflexive and transitive.
\end{proposition}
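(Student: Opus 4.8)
The plan is to mirror the argument of Proposition~\ref{prophomom}, which already establishes that the ordinary homomorphism relation ${\sim}_{h}$ is reflexive and transitive, and to additionally track the cardinality constraint $|e_{\mathcal{A}}|=|e_{\mathcal{B}}|$ that distinguishes a strong homomorphism (Definition~\ref{defnew14AS}) from a plain one (Definition~\ref{defnew14A}). Since that extra constraint is an equality of cardinals, its reflexivity and transitivity are automatic, so essentially no work beyond Proposition~\ref{prophomom} is needed.

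For reflexivity, I would take an arbitrary emergence ${\mathcal{E}}_{\mathcal{A}}=(\mathcal{A}, e_{\mathcal{A}}, U_{\mathcal{A}})$ and exhibit the identity functor ${Id}_{\mathcal{A}}:\mathcal{A}\longrightarrow\mathcal{A}$ as the required strong homomorphism. It satisfies $U_{\mathcal{A}}\circ {Id}_{\mathcal{A}}=U_{\mathcal{A}}$, which is the homomorphism condition, and trivially $|e_{\mathcal{A}}|=|e_{\mathcal{A}}|$, which is the additional condition of Definition~\ref{defnew14AS}. Hence ${\mathcal{E}}_{\mathcal{A}}{\sim}_{h}^{st}{\mathcal{E}}_{\mathcal{A}}$.

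For transitivity, suppose ${\mathcal{E}}_{\mathcal{A}}{\sim}_{h}^{st}{\mathcal{E}}_{\mathcal{B}}$ and ${\mathcal{E}}_{\mathcal{B}}{\sim}_{h}^{st}{\mathcal{E}}_{\mathcal{C}}$, witnessed by functors $T_1:\mathcal{A}\longrightarrow\mathcal{B}$ and $T_2:\mathcal{B}\longrightarrow\mathcal{C}$ with $U_{\mathcal{B}}\circ T_1=U_{\mathcal{A}}$, $U_{\mathcal{C}}\circ T_2=U_{\mathcal{B}}$, $|e_{\mathcal{A}}|=|e_{\mathcal{B}}|$ and $|e_{\mathcal{B}}|=|e_{\mathcal{C}}|$. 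Exactly as in Proposition~\ref{prophomom}, the composite $T_2\circ T_1$ is a functor and satisfies $U_{\mathcal{C}}\circ(T_2\circ T_1)=(U_{\mathcal{C}}\circ T_2)\circ T_1=U_{\mathcal{B}}\circ T_1=U_{\mathcal{A}}$. By transitivity of equality of cardinals we also obtain $|e_{\mathcal{A}}|=|e_{\mathcal{C}}|$, so $T_2\circ T_1$ is a strong homomorphism and ${\mathcal{E}}_{\mathcal{A}}{\sim}_{h}^{st}{\mathcal{E}}_{\mathcal{C}}$.

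I do not expect any genuine obstacle here: the only point requiring care is that Definition~\ref{defnew14AS} demands an honest witnessing functor rather than merely the existence of a chain, but composing the two given functors and invoking transitivity of cardinal equality supplies such a witness directly. No result beyond Proposition~\ref{prophomom} and the two definitions should be required.
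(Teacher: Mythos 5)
Your proposal is correct and follows exactly the route the paper intends: the paper's proof simply says ``Similar to that of Proposition~\ref{prophomom},'' and you have carried out that adaptation, using the identity functor for reflexivity and the composite $T_2\circ T_1$ for transitivity while observing that the extra cardinality condition $|e_{\mathcal{A}}|=|e_{\mathcal{B}}|$ is preserved trivially by reflexivity and transitivity of equality. Nothing is missing.
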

\begin{proof}
Similar to that of Proposition~\ref{prophomom}.
\end{proof}

Note that, as in the case of homomorphisms, strong homomorphisms are not
necessarily symmetric. The next definition establishes when two
emergences are strong isomorphic.

\begin{definition}\label{defnew14S}
Let $ {\mathcal{E}}_{\mathcal{A}}= ( \mathcal{A}, e_{\mathcal{A}},
U_{\mathcal{A}} )$ and $ {\mathcal{E}}_{\mathcal{B}}= ( \mathcal{B},
e_{\mathcal{B}}, U_{\mathcal{B}} )$ be two emergences.
We say that ${\mathcal{E}}_{\mathcal{A}}$ and ${\mathcal{E}}_{\mathcal{B}}$
are strong isomorphic written ${\mathcal{E}}_{\mathcal{A}} {\cong}_{e}^{st}
{\mathcal{E}}_{\mathcal{B}}$ if
\begin{itemize}
\item [ $\operatorname{(1)}$] $\mathcal{A}$ is isomorphic to
$\mathcal{B}$ as categories;

\item [ $\operatorname{(2)}$] there exists an isomorphism $F:\mathcal{A}
\longrightarrow\mathcal{B}$ such
that $U_{\mathcal{B}} \circ F = U_{\mathcal{A}}$;

\item [ $\operatorname{(3)}$] $|e_{\mathcal{A}}|=|e_{\mathcal{B}}|$.
\end{itemize}
\end{definition}

\begin{proposition}\label{propisomoS}
The relation ${\cong}_{e}^{st}$ is an equivalence relation.
\end{proposition}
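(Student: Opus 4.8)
The plan is to check the three defining properties of an equivalence relation directly from Definition~\ref{defnew14S}, exploiting the fact that its conditions $\operatorname{(1)}$ and $\operatorname{(2)}$ are precisely the two conditions defining ${\cong}_{e}$ in Definition~\ref{defnew14}. Since Proposition~\ref{newteo15} already establishes that ${\cong}_{e}$ is reflexive, symmetric, and transitive, the whole argument reduces to verifying that the extra requirement $\operatorname{(3)}$, namely $|e_{\mathcal{A}}|=|e_{\mathcal{B}}|$, is carried along at each step. Because $\operatorname{(3)}$ is just an equality of cardinals, this amounts to noting that equality of cardinalities is itself reflexive, symmetric, and transitive.

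For reflexivity, given an emergence ${\mathcal{E}}_{\mathcal{A}}$, I would take the identity functor ${id}_{\mathcal{A}}:\mathcal{A}\longrightarrow\mathcal{A}$, which is an isomorphism satisfying $U_{\mathcal{A}}\circ {id}_{\mathcal{A}}=U_{\mathcal{A}}$, so conditions $\operatorname{(1)}$ and $\operatorname{(2)}$ hold exactly as in Proposition~\ref{newteo15}; condition $\operatorname{(3)}$ is the trivial equality $|e_{\mathcal{A}}|=|e_{\mathcal{A}}|$. For symmetry, assuming ${\mathcal{E}}_{\mathcal{A}}{\cong}_{e}^{st}{\mathcal{E}}_{\mathcal{B}}$ with isomorphism $F:\mathcal{A}\longrightarrow\mathcal{B}$ and $U_{\mathcal{B}}\circ F=U_{\mathcal{A}}$, the inverse $F^{-1}:\mathcal{B}\longrightarrow\mathcal{A}$ is again an isomorphism and composing $U_{\mathcal{B}}\circ F=U_{\mathcal{A}}$ on the right with $F^{-1}$ yields $U_{\mathcal{A}}\circ F^{-1}=U_{\mathcal{B}}$; meanwhile $|e_{\mathcal{A}}|=|e_{\mathcal{B}}|$ immediately gives $|e_{\mathcal{B}}|=|e_{\mathcal{A}}|$. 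For transitivity, from ${\mathcal{E}}_{\mathcal{A}}{\cong}_{e}^{st}{\mathcal{E}}_{\mathcal{B}}$ and ${\mathcal{E}}_{\mathcal{B}}{\cong}_{e}^{st}{\mathcal{E}}_{\mathcal{C}}$ with isomorphisms $F:\mathcal{A}\longrightarrow\mathcal{B}$ and $G:\mathcal{B}\longrightarrow\mathcal{C}$, the composite $G\circ F$ is an isomorphism (as in Proposition~\ref{newteo15}) with $U_{\mathcal{C}}\circ(G\circ F)=(U_{\mathcal{C}}\circ G)\circ F=U_{\mathcal{B}}\circ F=U_{\mathcal{A}}$, while $|e_{\mathcal{A}}|=|e_{\mathcal{B}}|$ and $|e_{\mathcal{B}}|=|e_{\mathcal{C}}|$ chain to $|e_{\mathcal{A}}|=|e_{\mathcal{C}}|$.

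Because every ingredient is either borrowed verbatim from Proposition~\ref{newteo15} or is a one-line symmetry/transitivity statement about cardinal equality, I do not anticipate any genuine obstacle. The only point demanding a little care is purely bookkeeping: at each stage I must confirm that the same functor used to witness ${\cong}_{e}$ also respects condition $\operatorname{(3)}$, so that the witness for ${\cong}_{e}^{st}$ is exactly the functor produced by Proposition~\ref{newteo15} together with the relevant cardinality equality. No new categorical machinery is required.
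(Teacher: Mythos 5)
Your proof is correct and follows essentially the same route as the paper, whose entire proof is the remark that the argument is ``similar to that of Proposition~\ref{newteo15}''; you have simply spelled out the reuse of the identity, inverse, and composite functors together with the observation that equality of the cardinals $|e_{\mathcal{A}}|$ is itself reflexive, symmetric, and transitive. No discrepancy to report.
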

\begin{proof}
Similar to that of Proposition~\ref{newteo15}.
\end{proof}

\subsection{Cartesian product of emergence}\label{subcarteeme}

In this section, we show that cartesian products of emergences are also emergences.
Let us first recall the \emph{cartesian product} of categories.

\begin{definition}\label{cartesian}
Let $\mathcal{A}$ and $\mathcal{B}$ be two categories. The cartesian product
of $\mathcal{A}$ and $\mathcal{B}$ is the category $\mathcal{A}
\times \mathcal{B}$ defined as follows: the objets are all ordered pairs $(A, B)$,
where $ A \in \operatorname{Ob}(\mathcal{A})$ and $B \in \operatorname{Ob}
(\mathcal{B})$; for all $(A, B), (A^{*}, B^{*}) \in \operatorname{Ob}(\mathcal{A})
\times \operatorname{Ob}(\mathcal{B})$, given $\mathcal{A}$-morphisms
$A \xrightarrow{f_{1}} A^{*}$ and $A^{*} \xrightarrow{f_{2}} A^{'}$ and
$\mathcal{B}$-morphisms $B \xrightarrow{g_{1}} B^{*}$ and $B^{*}
\xrightarrow{g_{2}} B^{'}$ ,
the composition is defined as $(f_2 , g_2 )\circ(f_1 , g_1 )=(f_2
\circ f_1 , g_2 \circ g_1 )$, and the
identities are $id_{(A, B)}=(id_{A}, id_{B})$.
This concept can be generalized similarly to cartesian product of $n$
categories.
\end{definition}

Although the following result is well-known, we prove it here for completeness.

\begin{proposition}\label{newprop20}
Let ${\mathcal{A}}_{1}, {\mathcal{A}}_{2}, \ldots , {\mathcal{A}}_{n}$ and
${\mathcal{B}}_{1}, {\mathcal{B}}_{2}, \ldots , {\mathcal{B}}_{n}$ be categories and
consider the cartesian products ${\mathcal{A}}_{1} \times {\mathcal{A}}_{2}\times \ldots
\times {\mathcal{A}}_{n}$ and
${\mathcal{B}}_{1} \times {\mathcal{B}}_{2}\times \ldots \times {\mathcal{B}}_{n}$.
Assume that $F={\prod}_{i=1}^{n}F_{i}:
{\mathcal{A}}_{1} \times {\mathcal{A}
}_{2}\times \ldots \times {\mathcal{A}}_{n}\longrightarrow
{\mathcal{B}}_{1} \times {\mathcal{B}}_{2}\times \ldots \times {\mathcal{B}}_{n}$
is the application of the corresponding $F_i$ componentwise,
where $F_i: {\mathcal{A}}_{i}\longrightarrow
{\mathcal{B}}_{i}$ is a functor from ${\mathcal{A}}_{i}$ to
${\mathcal{B}}_{i}$ for each $i=1, \ldots , n$. Then $F$ is functor.
\end{proposition}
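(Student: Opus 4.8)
The plan is to verify directly that $F$ satisfies the three defining properties of a functor: that it sends objects to objects and morphisms to morphisms with the correct domains and codomains, that it preserves identities, and that it preserves composition. Since both the source and target categories are cartesian products whose structure (composition and identities) is defined componentwise according to Definition~\ref{cartesian}, each verification will reduce to invoking the corresponding functor axiom of each $F_i$ in its own coordinate.

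First I would check that $F$ is well defined on objects and morphisms. Given an object $(A_1, \ldots, A_n)$ of ${\mathcal{A}}_1 \times \cdots \times {\mathcal{A}}_n$, each $F_i(A_i)$ is a ${\mathcal{B}}_i$-object, so the tuple $F(A_1, \ldots, A_n)=(F_1(A_1), \ldots, F_n(A_n))$ is an object of ${\mathcal{B}}_1 \times \cdots \times {\mathcal{B}}_n$. Likewise, for a morphism $(f_1, \ldots, f_n): (A_1, \ldots, A_n) \longrightarrow (A_1^{*}, \ldots, A_n^{*})$, the functoriality of each $F_i$ guarantees that $F_i(f_i): F_i(A_i) \longrightarrow F_i(A_i^{*})$ is a ${\mathcal{B}}_i$-morphism with the expected domain and codomain; hence $F(f_1, \ldots, f_n)=(F_1(f_1), \ldots, F_n(f_n))$ is a morphism from $F(A_1, \ldots, A_n)$ to $F(A_1^{*}, \ldots, A_n^{*})$.

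Next I would verify the preservation of identities and of composition. For identities, using that $id_{(A_1, \ldots, A_n)}=(id_{A_1}, \ldots, id_{A_n})$ by Definition~\ref{cartesian} and that each $F_i$ preserves identities, one computes $F(id_{(A_1, \ldots, A_n)})=(F_1(id_{A_1}), \ldots, F_n(id_{A_n}))=(id_{F_1(A_1)}, \ldots, id_{F_n(A_n)})=id_{F(A_1, \ldots, A_n)}$. For composition, I would take composable morphisms $(f_1, \ldots, f_n)$ and $(g_1, \ldots, g_n)$ and apply, in order, the componentwise composition law of the product category, the fact that each $F_i$ preserves composition, and then the componentwise composition law once more, thereby obtaining $F((g_1, \ldots, g_n)\circ(f_1, \ldots, f_n))=F(g_1, \ldots, g_n)\circ F(f_1, \ldots, f_n)$.

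The argument presents no genuine obstacle: it is a straightforward coordinatewise bookkeeping in which every step is forced by Definition~\ref{cartesian} together with the functor axioms of the individual $F_i$. The only point requiring mild care is keeping track of the tuples and indices so that the componentwise composition in each of the two product categories is invoked in the correct order; once the case $n=2$ is written out explicitly, the general statement follows by the identical computation performed simultaneously in each of the $n$ coordinates.
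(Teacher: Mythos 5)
Your proposal is correct and follows essentially the same route as the paper's own proof: a direct componentwise verification that $F$ respects objects, morphisms, identities, and composition, using Definition~\ref{cartesian} and the functor axioms of each $F_i$. No gaps.
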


\begin{proof}
Assume that ${(A_{i})}_{i=1}^{n} \in \operatorname{Ob}({\prod}_{i=1}^{n}
{{\mathcal{A}}_{i}})$. From definition we have
$ F({(A_{i})}_{i=1}^{n})= (F_{1}(A_{1}) , F_{2}(A_{2}), \ldots , F_{n}(A_{n})) \in
\operatorname{Ob}({\prod}_{i=1}^{n} {{\mathcal{B}}_{i}}).$
Suppose that $( f_1 , f_2 , \ldots , f_n ), ( g_1 ,$ $g_2 , \ldots , g_n ) \in
\operatorname{Mor} ({\prod}_{i=1}^{n} {{A}}_{i})$, where $f_{i}:A_{i}^{(1)}
\longrightarrow A_{i}^{(2)}$ and
$g_{i}:A_{i}^{(2)}\longrightarrow A_{i}^{(3)}$ for all $i=1, 2, \ldots , n$.
Thus
\begin{eqnarray*}
F[( g_1 , g_2 , \ldots , g_n ) \circ ( f_1 , f_2 , \ldots , f_n )]&=&
F( [g_{1}\circ f_{1}] , [g_{2}\circ f_{2}], \ldots , [g_{n}\circ f_{n}] )\\
&\stackrel{def}{=}& ( F_{1}(g_{1}\circ f_{1}) , F_{2}(g_{2}\circ f_{2}),
\ldots , F_{n}(g_{n}\circ f_{n}))\\
&=& ([F_{1}(g_{1}) \circ F_{1}(f_{1})] , [F_{2}(g_{2}) \circ F_{2}(f_{2})],
\ldots ,\\ & & [F_{n}(g_{n}) \circ F_{n}(f_{n})]\\
&\stackrel{def}{=}& (F_{1}(g_{1}) , F_{2}(g_{2}) , \ldots , F_{n}(g_{n}) )
\circ \\ & & (F_{1}(f_{1}) , F_{2}(f_{2}) ,
\ldots , F_{n}(f_{n}) )\\
&=& F( g_1 , g_2 , \ldots , g_n )\circ F( f_1 , f_2 , \ldots , f_n ).
\end{eqnarray*}

Let ${(A_{i})}_{i=1}^{n} \in \operatorname{Ob}({\prod}_{i=1}^{n} {{\mathcal{A}}_{i}})$
and let ${id}_{A_{i}}: A_{i} \longrightarrow A_{i}$
be the identity of $A_{i}$ for all $i= 1, 2, \ldots , n$. Therefore,
\begin{eqnarray*}
F( {id}_{A_{1}}, {id}_{A_{2}}, \ldots , {id}_{A_{n}}) &\stackrel{def}{=}&
( F_1 ({id}_{A_{1}}), F_2 ({id}_{A_{2}}) , \ldots , F_n ({id}_{A_{n}}) )\\
&=& ( {id}_{F_1 (A_{1})}, {id}_{F_2 (A_{2})}, \ldots , {id}_{F_n (A_{n})})\\
&\stackrel{def}{=}& {Id}_{F({\prod}_{i=1}^{n} {{A}}_{i})}.
\end{eqnarray*}
\end{proof}

Let ${\mathcal{A}}_{1}, {\mathcal{A}}_{2},
\ldots , {\mathcal{A}}_{n}$ be categories. The projector functor
${\Pi}_{{\mathcal{A}}_{j}}: {\prod}_{i=1}^{n} {{\mathcal{A}}_{i}}\longrightarrow
{\mathcal{A}}_{j}$ is defined as follows: if $ {(A_{i})}_{i=1}^{n}\in
\operatorname{Ob}({\prod}_{i=1}^{n} {{\mathcal{A}}_{i}})$ and if
${(f_{i})}_{i=1}^{n}\in \operatorname{Mor}({\prod}_{i=1}^{n} {{\mathcal{A}}_{i}})$
then $P_{{\mathcal{A}}_{j}} ({(A_{i})}_{i=1}^{n})= A_j$ and
$P_{{\mathcal{A}}_{j}} ({(f_{i})}_{i=1}^{n})=f_{j}$.

The following result is a natural consequence of our definition of emergence.

\begin{proposition}\label{newprop22}
Let ${\mathcal{A}}_{1}, {\mathcal{A}}_{2}, \ldots , {\mathcal{A}}_{n}$ be constructs.
Then the triple $({\prod}_{i=1}^{n} {{\mathcal{A}}_{i}},$
$e_{({\prod}_{i=1}^{n} {{\mathcal{A}}_{i}})}, U_{({\prod}_{i=1}^{n}
{{\mathcal{A}}_{i}})})$, where $U_{({\prod}_{i=1}^{n} {{\mathcal{A}}_{i}})}$ is the
usual underlying functor, is an emergence.
\end{proposition}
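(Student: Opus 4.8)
The plan is to check the three defining conditions of an emergence (Definition~\ref{defnew10}) for the given triple: that $\prod_{i=1}^{n}{\mathcal{A}}_{i}$ is a construct, that $e_{(\prod_{i=1}^{n}{\mathcal{A}}_{i})}$ is a finite set of operations, and that $U_{(\prod_{i=1}^{n}{\mathcal{A}}_{i})}$ is a GU functor into $\mathcal{S}$. I would dispatch the functorial condition first, as it is the cleanest, and then construct the structure.

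For the underlying functor, I would exhibit $U_{(\prod_{i=1}^{n}{\mathcal{A}}_{i})}$ as a composite of functors. Since each $U_{{\mathcal{A}}_{i}}:{\mathcal{A}}_{i}\longrightarrow\mathcal{S}$ is a functor, Proposition~\ref{newprop20} (with ${\mathcal{B}}_{i}=\mathcal{S}$) shows that the componentwise map $\prod_{i=1}^{n}U_{{\mathcal{A}}_{i}}:\prod_{i=1}^{n}{\mathcal{A}}_{i}\longrightarrow \prod_{i=1}^{n}\mathcal{S}$ is a functor. Composing it with the product functor $\prod_{i=1}^{n}\mathcal{S}\longrightarrow\mathcal{S}$, which sends an object $(S_{1},\ldots,S_{n})$ to $S_{1}\times\cdots\times S_{n}$ and a morphism $(g_{1},\ldots,g_{n})$ to $g_{1}\times\cdots\times g_{n}$, again yields a functor, because the composite of functors is a functor. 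Its value on an object $(A_{1},\ldots,A_{n})$ is the set $\underline{A_{1}}\times\cdots\times\underline{A_{n}}$, i.e.\ precisely the underlying set of the product object; hence this composite is the usual underlying functor of $\prod_{i=1}^{n}{\mathcal{A}}_{i}$. By the Remark following Definition~\ref{defnew9}, every usual underlying functor is in particular a GU functor, so the third condition holds.

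For the remaining two conditions I would equip $\prod_{i=1}^{n}{\mathcal{A}}_{i}$ with the structure inherited componentwise from the factors. Writing $m_{i}=|e_{{\mathcal{A}}_{i}}|$, each factor contributes its $m_{i}$ operation types; on an object $(A_{1},\ldots,A_{n})$, whose underlying set is $\underline{A_{1}}\times\cdots\times\underline{A_{n}}$, I would lift the $j$-th operation $e_{A_{i}}^{(j)}$ of the $i$-th factor so that it acts through the $i$-th coordinate. The resulting family $e_{(\prod_{i=1}^{n}{\mathcal{A}}_{i})}$ is a set of operations of cardinality $\sum_{i=1}^{n}m_{i}$, which is finite since each $m_{i}$ is finite and $n$ is finite; moreover every product object carries operations of exactly these types, so the matching requirement of Definition~\ref{defnew8} is met and $\prod_{i=1}^{n}{\mathcal{A}}_{i}$ is a construct with a finite structure.

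The step demanding the most care is the explicit componentwise lifting of the factors' operations, since a binary operation living on a single coordinate must be promoted to a genuine operation on the full product set; once an extension convention is fixed, verifying that each lifted operation shares the properties of its source operation in the sense of Definition~\ref{defnew8} is routine bookkeeping rather than a real difficulty. Assembling the three verified conditions gives that the triple is an emergence, of order $\sum_{i=1}^{n}m_{i}$.
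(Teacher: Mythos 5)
Your proposal is correct and follows essentially the same route as the paper: the paper's (much terser) proof likewise observes that the cartesian product of constructs is a construct whose operations are those of the component constructs, and that the usual underlying functor is a GU functor. You simply supply the details the paper omits — in particular the verification of functoriality via Proposition~\ref{newprop20} composed with the product functor $\prod_{i=1}^{n}\mathcal{S}\longrightarrow\mathcal{S}$, and the explicit count showing the inherited structure is finite.
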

\begin{proof}
Since the cartesian product of
constructs is also a construct (the operations are the operations
of each construct which composes it) and because
$U_{({\prod}_{i=1}^{n} {{\mathcal{A}}_{i}})}$ is a GU functor,  we are done.
\end{proof}

\subsection{Representability and sub-emergence}\label{subrepresenteme}

Representability of emergences is defined according to the representability of
their corresponding GU functors.

\begin{definition}\label{emerepres}
Let ${\mathcal{E}}_{\mathcal{A}}= ( \mathcal{A}, e_{\mathcal{A}}, U_{\mathcal{A}} )$
be an emergence. We say that ${\mathcal{E}}_{\mathcal{A}}$ is representable
if the GU functor $U_{\mathcal{A}}:\mathcal{A}\longrightarrow \mathcal{S}$ is representable.
\end{definition}

\begin{example}
Let $\mathcal{V}$ be the construct of the real vector
spaces where the GU functor is the usual underlying functor.
Then the emergence ${\mathcal{E}}_{\mathcal{V}}$
is represented by the pair $(\mathbb{R}, 1)$. If we have
$\mathcal{A}=\mathcal{G}$, that is,
$\mathcal{A}$ is the construct of groups, where the GU
functor is considered as the usual underlying functor,
then the emergence
${\mathcal{E}}_{\mathcal{G}}$ is representable by
the pair $(\mathbb{Z}, 1)$.
\end{example}

\begin{proposition}\label{repgumono}
If the emergence ${\mathcal{E}}_{\mathcal{A}}$ is representable,
then $U_{\mathcal{A}}$ preserves monomorphisms.
\end{proposition}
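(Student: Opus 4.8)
The plan is to unwind the definition of representability and reduce the statement to the classical fact that covariant hom-functors preserve monomorphisms, then transport this property across the natural isomorphism supplied by representability.

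First, I would recall that, by Definition~\ref{emerepres}, the emergence ${\mathcal{E}}_{\mathcal{A}}$ being representable means that the GU functor $U_{\mathcal{A}}$ is naturally isomorphic to a covariant hom-functor $\operatorname{hom}(A, -):\mathcal{A}\longrightarrow \mathcal{S}$ for some $\mathcal{A}$-object $A$. Let ${\tau}:\operatorname{hom}(A, -)\longrightarrow U_{\mathcal{A}}$ denote this natural isomorphism, so that each component ${\tau}_{X}:\operatorname{hom}(A, X)\longrightarrow U_{\mathcal{A}}(X)$ is an $\mathcal{S}$-isomorphism, that is, a bijection of sets.

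Second, I would establish that $\operatorname{hom}(A, -)$ preserves monomorphisms. Let $f: X\longrightarrow Y$ be a monomorphism in $\mathcal{A}$. Since $\operatorname{hom}(A, f)$ sends $g\mapsto f\circ g$, to verify injectivity I would suppose $\operatorname{hom}(A, f)(g)=\operatorname{hom}(A, f)(g')$ for $g, g'\in \operatorname{hom}(A, X)$; this means $f\circ g = f\circ g'$, and since $f$ is a monomorphism we conclude $g=g'$. Hence $\operatorname{hom}(A, f)$ is an injective function, i.e., a monomorphism in $\mathcal{S}$.

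Third, I would transport this conclusion along ${\tau}$. Given a monomorphism $f: X\longrightarrow Y$, the naturality square for ${\tau}$ at $f$ commutes, so $U_{\mathcal{A}}(f)\circ {\tau}_{X}= {\tau}_{Y}\circ \operatorname{hom}(A, f)$. Because ${\tau}_{X}$ and ${\tau}_{Y}$ are bijections, one may write $U_{\mathcal{A}}(f)= {\tau}_{Y}\circ \operatorname{hom}(A, f)\circ {\tau}_{X}^{-1}$; as the composite of injective functions (the two bijections together with the monomorphism $\operatorname{hom}(A, f)$) is injective, $U_{\mathcal{A}}(f)$ is a monomorphism in $\mathcal{S}$, and thus $U_{\mathcal{A}}$ preserves monomorphisms. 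I do not expect a serious obstacle here, since this is essentially the classical statement that representable functors preserve monomorphisms; the only point requiring care is to invoke the commuting naturality square in the correct direction so as to express $U_{\mathcal{A}}(f)$ as a conjugate of $\operatorname{hom}(A, f)$ by isomorphisms, whence preservation of injectivity is immediate.
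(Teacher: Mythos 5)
Your proof is correct and is exactly the standard argument: hom-functors preserve monomorphisms because composition with a mono is injective, and the property transfers along the components of the natural isomorphism, which are bijections. The paper does not write this out but simply cites Proposition 7.37(1) of Ad\'amek--Herrlich--Strecker, whose proof is the same one you give, so you have in effect supplied the details the paper delegates to the reference.
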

\begin{proof}
See Proposition~\cite[Proposition 7.37 (1)]{strecker:1990}.
\end{proof}

The concept of \emph{sub-emergence} is defined similarly as subcategory.

\begin{definition}\label{sub-emergence}
Let $ {\mathcal{E}}_{\mathcal{A}}= ( \mathcal{A}, e_{\mathcal{A}},
U_{\mathcal{A}} )$ be an emergence.
We say that ${\mathcal{E}}_{\mathcal{B}}= ( \mathcal{B}, e_{\mathcal{B}},
U_{\mathcal{B}} )$ is a sub-emergence of $ {\mathcal{E}}_{\mathcal{A}}$
if the following conditions hold:
\begin{itemize}
\item [ $\operatorname{(i)}$] $\operatorname{Ob}(\mathcal{B})\subseteq
\operatorname{Ob}(\mathcal{A})$;

\item[ $\operatorname{(ii)}$] for all $A, A^{*} \in \operatorname{Ob}
(\mathcal{B})$, it follows that
${\operatorname{hom}}_{\mathcal{B}}(A, A^{*})\subseteq {\operatorname{hom}}_{\mathcal{A}}
(A, A^{*}) $;

 \item [ $\operatorname{(iii)}$] for all $A \in \operatorname{Ob}
(\mathcal{B})$, there exists ${id}_{A}: A\longrightarrow A$
such that ${id}_{A}$ viewed as an $\mathcal{B}$-morphism is equal to ${id}_{A}$
viewed as an $\mathcal{A}$-morphism;

\item [ $\operatorname{(iv)}$] the composition law in $\mathcal{B}$
is the restriction of the composition law in $\mathcal{A}$;

\item[ $\operatorname{(v)}$] the inclusion functor $E:\mathcal{B}\hookrightarrow
\mathcal{A}$ satisfies $U_{\mathcal{B}}= U_{\mathcal{A}}
\circ E$.
\end{itemize}
\end{definition}

\begin{remark}
$\operatorname{(1)}$ Notice that, since $\operatorname{Ob}(\mathcal{B})
\subseteq \operatorname{Ob}(\mathcal{A})$, it follows that
$e_{\mathcal{A}}=e_{\mathcal{B}}$.\\
$\operatorname{(2)}$ It is clear from the definition that all
sub-emergence is also an emergence.
\end{remark}

\begin{definition}\label{fullsub-emer}
Let ${\mathcal{E}}_{\mathcal{B}}= ( \mathcal{B}, e_{\mathcal{B}},
U_{\mathcal{B}})$ be a sub-emergence of
${\mathcal{E}}_{\mathcal{A}}$. We say that ${\mathcal{E}}_{\mathcal{B}}$ is a
full sub-emergence if ${\mathcal{E}}_{\mathcal{B}}$
satisfies the criteria given in
Definition~\ref{sub-emergence} and also the following condition:
for all $A, A^{*} \in \operatorname{Ob}(\mathcal{A})$, it
follows that ${\operatorname{hom}}_{\mathcal{A}}(A, A^{*})=
{\operatorname{hom}}_{\mathcal{B}}(A, A^{*})$.
\end{definition}

For every sub-emergency ${\mathcal{E}}_{\mathcal{B}}$ of the
emergence ${\mathcal{E}}_{\mathcal{A}}$, the inclusion functor
$I_{(\mathcal{B}\hookrightarrow\mathcal{A})}: \mathcal{B}\hookrightarrow
\mathcal{A}$ is always an embedding
(injective on morphisms). Such functor is full (\emph{i.e}.,
surjective on the $\operatorname{hom}$ sets) if and only
if ${\mathcal{E}}_{\mathcal{B}}$
is a full sub-emergence of ${\mathcal{E}}_{\mathcal{A}}$.

\begin{definition}\label{newdef18}
Let $ {\mathcal{E}}_{\mathcal{A}}= ( \mathcal{A}, e_{\mathcal{A}},
U_{\mathcal{A}} )$ and $ {\mathcal{E}}_{\mathcal{B}}=
( \mathcal{B}, e_{\mathcal{B}}, U_{\mathcal{B}} )$ be two emergences. We say that
$ {\mathcal{E}}_{\mathcal{A}}$ induces $ {\mathcal{E}}_{\mathcal{B}}$ if each
$\mathcal{B}$-object $B= (\underline{B}, e_{\mathcal{B}})$ induces an $\mathcal{A}$-object
$A= (\underline{A}=\underline{B}, e_{A})$ such that $e_{B}\subseteq e_{A}$.
\end{definition}

The concept of \emph{quasi-emergence} is analogous to that of quasi-category.

\begin{definition}\label{quasiemer}
A quasi-emergence is an ordered quadruple
$\mathcal{E}\mathcal{M}= (\operatorname{Ob}(\mathcal{E}\mathcal{M}),
$ ${\operatorname{hom}}_{(\mathcal{E}\mathcal{M})}, id, \circ )$, where
$\operatorname{Ob}(\mathcal{E}\mathcal{M})$ is the conglomerate of all emergences,
${\operatorname{hom}}_{\mathcal{E}\mathcal{M}}$ is the conglomerate
of all homomorphisms between emergences, $id$ is the class of all identities
of constructs and the composite $\circ$ is the composite
in the sense of emergences according to Definition~\ref{defnew14A}.
\end{definition}

\subsection{Equalizer and co-equalizer of emergence}\label{subequalieme}

In this section we introduce the concept of equalizer for emergences.
The following definition establishes such concept.

\begin{definition}\label{defemeequa}
Let $ {\mathcal{E}}_{\mathcal{A}}$,
$ {\mathcal{E}}_{\mathcal{B}}$ and ${\mathcal{E}}_{\mathfrak{E}}$
be emergences. Assume that $F, G:\mathcal{A}\longrightarrow
\mathcal{B}$ are functors. We say that a functor $E:\mathfrak{E}\longrightarrow
\mathcal{A}$ is an equalizer emergence of $F$ and $G$ if:
\begin{itemize}
\item [ $\operatorname{(1)}$] $F \circ E = G \circ E$;

\item[ $\operatorname{(2)}$] for each emergence ${\mathcal{E}}_{{\mathfrak{E}}^{*}}$
and for each functor $E^{*}:{\mathfrak{E}}^{*}\longrightarrow \mathcal{A}$
such that $F \circ E^{*} = G \circ E^{*}$, there exists a unique functor (universal property)
$\overline{E}:{\mathfrak{E}}^{*} \longrightarrow\mathfrak{E}$ such that $E^{*}= E
\circ \overline{E}$ and the following diagram
\end{itemize}
\begin{eqnarray*}
\begin{tikzpicture}
  \matrix (m) [matrix of math nodes,row sep=3em,column sep=3em,minimum width=2em]
  {
      {\mathfrak{E}}^{*} &  & \\
      \mathfrak{E} &  \mathcal{A} & \mathcal{B}\\};
  \path[-stealth]
    (m-1-1) edge node [left] {$\overline{E}$} (m-2-1)
    (m-1-1) edge node [right] {$E^{*}$}(m-2-2)
    (m-2-1) edge node [below] {$E$} (m-2-2)
    (m-2-2) edge node [above] {$F$} (m-2-3)
    (m-2-2) edge node [below] {$G$} (m-2-3);
\end{tikzpicture}
\end{eqnarray*} commutes.
\end{definition}

In the quasi-emergence $\mathcal{E}\mathcal{M}$, equalizer emergences exist.

\begin{theorem}\label{existequaeme}
Let $ {\mathcal{E}}_{\mathcal{A}}$ and $ {\mathcal{E}}_{\mathcal{B}}$
be emergences. Assume that
$F, G:\mathcal{A}\longrightarrow \mathcal{B}$ are functors. Then there exists an
equalizer emergence $ \mathfrak{E} \xrightarrow{E} \mathcal{A}$ of $F$ and $G$.
\end{theorem}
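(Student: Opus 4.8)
The plan is to reproduce the standard construction of an equalizer of two functors and then check that the resulting category carries the structure of an emergence, so that it lives inside the quasi-emergence $\mathcal{E}\mathcal{M}$. First I would define $\mathfrak{E}$ to be the subcategory of $\mathcal{A}$ selected by $F$ and $G$: take
$$\operatorname{Ob}(\mathfrak{E}) = \{ A \in \operatorname{Ob}(\mathcal{A}) : F(A) = G(A)\}$$
and, for $A, A^{*} \in \operatorname{Ob}(\mathfrak{E})$,
$$\operatorname{hom}_{\mathfrak{E}}(A, A^{*}) = \{ f \in \operatorname{hom}_{\mathcal{A}}(A, A^{*}) : F(f) = G(f)\},$$
with identities and composition inherited from $\mathcal{A}$. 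A short closure check shows this is a category: if $F(A) = G(A)$ then $F({id}_{A}) = {id}_{F(A)} = {id}_{G(A)} = G({id}_{A})$, so ${id}_{A}$ is an $\mathfrak{E}$-morphism; and if $F(f) = G(f)$, $F(g) = G(g)$ then $F(g \circ f) = F(g)\circ F(f) = G(g)\circ G(f) = G(g \circ f)$, so composites of $\mathfrak{E}$-morphisms are again $\mathfrak{E}$-morphisms. Hence the inclusion $E: \mathfrak{E}\hookrightarrow\mathcal{A}$ is a functor.

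Next I would promote $\mathfrak{E}$ to an emergence. Since $\operatorname{Ob}(\mathfrak{E})\subseteq\operatorname{Ob}(\mathcal{A})$, each $\mathfrak{E}$-object already carries the operations coming from $e_{\mathcal{A}}$, so I set $e_{\mathfrak{E}} = e_{\mathcal{A}}$ and $U_{\mathfrak{E}} = U_{\mathcal{A}}\circ E$. With these choices $\mathfrak{E}$ satisfies every clause of Definition~\ref{sub-emergence} (the $\operatorname{hom}$-sets are subsets, the identities and composition are restricted from $\mathcal{A}$, and the inclusion satisfies $U_{\mathfrak{E}} = U_{\mathcal{A}}\circ E$), so $\mathcal{E}_{\mathfrak{E}} = (\mathfrak{E}, e_{\mathfrak{E}}, U_{\mathfrak{E}})$ is a sub-emergence of $\mathcal{E}_{\mathcal{A}}$ and is therefore itself an emergence by Remark~(2) following Definition~\ref{sub-emergence}.

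I would then verify the two defining properties of an equalizer emergence. Condition~(1), $F\circ E = G\circ E$, is immediate: on an object $A$ of $\mathfrak{E}$ one has $F(E(A)) = F(A) = G(A) = G(E(A))$, and on a morphism $f$ of $\mathfrak{E}$ one has $F(E(f)) = F(f) = G(f) = G(E(f))$. For the universal property~(2), suppose $E^{*}:\mathfrak{E}^{*}\longrightarrow\mathcal{A}$ is any functor with $F\circ E^{*} = G\circ E^{*}$. Then for every $\mathfrak{E}^{*}$-object $X$ we get $F(E^{*}(X)) = G(E^{*}(X))$, so $E^{*}(X)\in\operatorname{Ob}(\mathfrak{E})$, and likewise $F(E^{*}(f)) = G(E^{*}(f))$ for every $\mathfrak{E}^{*}$-morphism $f$, so $E^{*}(f)$ is an $\mathfrak{E}$-morphism. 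Consequently $E^{*}$ factors through $\mathfrak{E}$, and I define $\overline{E}:\mathfrak{E}^{*}\longrightarrow\mathfrak{E}$ by $\overline{E}(X) = E^{*}(X)$ and $\overline{E}(f) = E^{*}(f)$, now regarded as landing in $\mathfrak{E}$; by construction $E\circ\overline{E} = E^{*}$. Uniqueness follows because $E$ is the inclusion, hence injective on objects and morphisms, so any $\overline{E}{}'$ with $E\circ\overline{E}{}' = E^{*}$ must coincide with $\overline{E}$.

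The purely categorical part — closure of $\mathfrak{E}$, functoriality of $E$, and the factorization yielding the universal property — is routine. The one step that genuinely needs attention, and where I expect the only real friction, is confirming that the object/morphism selection actually produces a legitimate sub-emergence: that is, that $e_{\mathfrak{E}} = e_{\mathcal{A}}$ equips $\mathfrak{E}$ with a valid construct structure and that $U_{\mathfrak{E}} = U_{\mathcal{A}}\circ E$ is a bona fide GU functor in the sense of Definition~\ref{defnew9}. It is precisely this verification, rather than the formal equalizer argument, that upgrades the categorical equalizer into an equalizer emergence within $\mathcal{E}\mathcal{M}$.
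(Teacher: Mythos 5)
Your proposal is correct and follows essentially the same route as the paper: the same full subcategory $\mathfrak{E}$ cut out by $F(A)=G(A)$ and $F(f)=G(f)$, the same closure checks, and the same factorization argument for the universal property via the inclusion functor. The only difference is that you are slightly more careful than the paper in two places — explicitly equipping $\mathfrak{E}$ with $e_{\mathfrak{E}}=e_{\mathcal{A}}$ and $U_{\mathfrak{E}}=U_{\mathcal{A}}\circ E$ so that it is genuinely a sub-emergence, and spelling out why $\overline{E}$ is unique — both of which the paper leaves implicit.
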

\begin{proof}
We first define the construct $\mathfrak{E}$ as follows. The class of
objects of $\mathfrak{E}$ is the class $\operatorname{Ob}(\mathfrak{E})=
\{ A \in \operatorname{Ob}(\mathcal{A}) \ | \ F(A) = G(A) \}$; the class of morphisms of
$\mathfrak{E}$ is the class $\operatorname{Mor}(\mathfrak{E})=\{
f \in \operatorname{Mor}(\mathcal{A}) \ | \ F(f)=G(f) \}$. We will
show that $\operatorname{Mor}(\mathfrak{E})$ is well defined. In fact,
for each $\mathfrak{E}$-morphisms $A \xrightarrow{f} B$ and
$B \xrightarrow{g} C$, we have
\begin{eqnarray*}
F( g \circ f) &=& F(g)\circ F(f)\\
&=& G(g) \circ G(f)\\
&=& G(g \circ f)
\end{eqnarray*}
that is, the composite $g \circ f$ is also an
$\mathfrak{E}$-morphism. Moreover, for each $\mathfrak{E}$-object $A$
and for the identity $A \xrightarrow{{id}_{A}} A$ we have $F(A)=G(A)$ and
$[F(A) \xrightarrow{{id}_{F(A)}} F(A) ]= [G(A)
\xrightarrow{{id}_{G(A)}} G(A)]$ since $F({id}_{A})=
{id}_{F(A)}={id}_{G(A)}=G({id}_{A})$ by uniqueness of the identity. Hence, for
each $\mathfrak{E}$-object $A$, the
identity ${id}_{A}$ belongs to $\operatorname{Mor}(\mathfrak{E})$.

We claim that the inclusion functor $\mathfrak{E}\xrightarrow{{I}_{(\mathfrak{E}
\hookrightarrow\mathcal{A})}}\mathcal{A}$ is an
equalizer emergence of $F$ and $G$. In fact, it
follows from construction that $F \circ {I}_{(\mathfrak{E}
\hookrightarrow\mathcal{A})}= G \circ {I}_{(\mathfrak{E}
\hookrightarrow\mathcal{A})}$. Further, for each functor
${\mathfrak{E}}^{*} \xrightarrow{E^{*}} \mathcal{A}$
such that $F \circ E^{*} = G \circ E^{*}$ one has
${\operatorname{Im}}_{(\operatorname{Ob}({\mathfrak{E}}^{*}))}[E^{*}]
\subset \operatorname{Ob}(\mathfrak{E})$ and
${\operatorname{Im}}_{(\operatorname{Mor}({\mathfrak{E}}^{*}))}[E^{*}]\subset
\operatorname{Mor}(\mathfrak{E})$. Therefore, we define  the unique
functor $\overline{E}:{\mathfrak{E}}^{*}
\longrightarrow \mathfrak{E}$
such that $E^{*}= {I}_{(\mathfrak{E}
\hookrightarrow\mathcal{A})} \circ \overline{E}$, that is, $\overline{E}= E^{*}$.
The proof is complete.
\end{proof}

Equalizer emergences are essentially unique, as states the following result.

\begin{proposition}\label{equaemerg}
Let $ {\mathcal{E}}_{\mathcal{A}}$, $ {\mathcal{E}}_{\mathcal{B}}$
be emergences and $F, G:\mathcal{A}\longrightarrow
\mathcal{B}$ be functors. If $E:\mathfrak{E}\longrightarrow \mathcal{A}$
and $E^{*}:{\mathfrak{E}}^{*}\longrightarrow \mathcal{A}$
are equalizer emergences of $F$ and $G$, then there exists an isomorphism
$T:{\mathfrak{E}}^{*}\longrightarrow \mathfrak{E}$ such that $E^{*} = E
\circ T$. Moreover, if $E:\mathfrak{E}
\longrightarrow \mathcal{A}$ is an equalizer emergence of $F$ and $G$ and
$I:{\mathfrak{E}}^{*}\longrightarrow \mathfrak{E}$ is an
isomorphism, then the composite $E\circ I:{\mathfrak{E}}^{*}\longrightarrow
\mathcal{A}$ is also an equalizer emergence of $F$ and $G$.
\end{proposition}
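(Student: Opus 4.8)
The plan is to run the standard ``uniqueness of a universal construction up to isomorphism'' argument, using only the universal property from Definition~\ref{defemeequa} together with the existence established in Theorem~\ref{existequaeme}. Throughout, the key observation is that an equalizer emergence is characterized by a factorization property whose uniqueness clause forces any two such objects to be canonically isomorphic.

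For the first assertion, I would first apply the universal property of $E:\mathfrak{E}\longrightarrow\mathcal{A}$ to the equalizing functor $E^{*}$. Since $E^{*}$ satisfies $F\circ E^{*}=G\circ E^{*}$, there is a unique functor $T:{\mathfrak{E}}^{*}\longrightarrow\mathfrak{E}$ with $E^{*}=E\circ T$. Symmetrically, applying the universal property of $E^{*}$ to $E$ (which equalizes $F$ and $G$ by hypothesis) yields a unique functor $S:\mathfrak{E}\longrightarrow{\mathfrak{E}}^{*}$ with $E=E^{*}\circ S$. It then remains to check that $S$ and $T$ are mutually inverse. For this I would compute $E\circ(T\circ S)=(E\circ T)\circ S=E^{*}\circ S=E=E\circ{Id}_{\mathfrak{E}}$; since $E$ itself is an equalizing functor, the uniqueness clause of the universal property of $E$ (applied to the two factorizations $T\circ S$ and ${Id}_{\mathfrak{E}}$ of $E$ through $E$) gives $T\circ S={Id}_{\mathfrak{E}}$. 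The analogous computation $E^{*}\circ(S\circ T)=E^{*}=E^{*}\circ{Id}_{{\mathfrak{E}}^{*}}$ yields $S\circ T={Id}_{{\mathfrak{E}}^{*}}$. Hence $T$ is an isomorphism with inverse $S$, and $E^{*}=E\circ T$, as required.

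For the ``moreover'' assertion, let $E:\mathfrak{E}\longrightarrow\mathcal{A}$ be an equalizer emergence of $F$ and $G$ and let $I:{\mathfrak{E}}^{*}\longrightarrow\mathfrak{E}$ be an isomorphism. I would first verify the equalizing condition $F\circ(E\circ I)=(F\circ E)\circ I=(G\circ E)\circ I=G\circ(E\circ I)$. For the universal property, take any emergence ${\mathcal{E}}_{{\mathfrak{E}}^{**}}$ and functor $E^{**}:{\mathfrak{E}}^{**}\longrightarrow\mathcal{A}$ with $F\circ E^{**}=G\circ E^{**}$. The universal property of $E$ supplies a unique $\overline{E}:{\mathfrak{E}}^{**}\longrightarrow\mathfrak{E}$ with $E^{**}=E\circ\overline{E}$; then $I^{-1}\circ\overline{E}$ factors $E^{**}$ through $E\circ I$, since $(E\circ I)\circ(I^{-1}\circ\overline{E})=E\circ\overline{E}=E^{**}$. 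Uniqueness transfers along $I$: if $H$ also satisfies $(E\circ I)\circ H=E^{**}$, then $E\circ(I\circ H)=E^{**}$ forces $I\circ H=\overline{E}$ by the uniqueness clause for $E$, whence $H=I^{-1}\circ\overline{E}$. Thus $E\circ I$ enjoys the full universal property and is an equalizer emergence of $F$ and $G$.

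I do not expect a genuine obstacle here; the argument is entirely formal. The only point requiring care is the correct invocation of the uniqueness clause of the universal property in the first part---specifically, recognizing that $E$ (respectively $E^{*}$) must be viewed as a competing equalizing functor factoring through itself, so that both $T\circ S$ and the identity are legitimate mediating functors and must therefore coincide. Once this is set up, the remaining verifications are routine diagram chases.
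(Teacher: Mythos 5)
Your argument is correct and is precisely the standard ``uniqueness of a universal construction up to isomorphism'' proof that the paper itself invokes by deferring to Proposition~7.53 of \cite{strecker:1990}; both directions, including the key step of playing $T\circ S$ against ${Id}_{\mathfrak{E}}$ via the uniqueness clause, are handled correctly. The only difference is that you spell the argument out in full where the paper merely cites it.
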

\begin{proof}
It suffices to adapt the proof of Proposition~$7.53$ of \cite{strecker:1990}.
\end{proof}

We next introduce the concept of \emph{regular monomorphism emergence}.

\begin{definition}\label{reghomoeme}
Let ${\mathcal{E}}_{\mathcal{A}}$ and ${\mathcal{E}}_{\mathfrak{E}}$
be emergences. A functor $E:\mathfrak{E}\longrightarrow \mathcal{A}$
is said to be a regular monomorphism emergence if it is an equalizer emergence of
some pair of functors.
\end{definition}

The following result establishes nice properties of regular monomorphism
emergences (RHE)'s.

\begin{proposition}\label{regmonomono}
Let ${\mathcal{E}}_{\mathcal{A}}$ and ${\mathcal{E}}_{\mathfrak{E}}$
be emergences and let $E:\mathfrak{E}\longrightarrow \mathcal{A}$
be a functor. Then the following are equivalent:
\begin{itemize}

\item [ $\operatorname{(1)}$] $E:\mathfrak{E}\longrightarrow
\mathcal{A}$ is a RHE.

\item [ $\operatorname{(2)}$] $E:\mathfrak{E}\longrightarrow
\mathcal{A}$ is a monomorphism.

\item [ $\operatorname{(3)}$] $E:\mathfrak{E}\longrightarrow
\mathcal{A}$ is an embedding.
\end{itemize}
\end{proposition}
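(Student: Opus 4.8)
The plan is to prove the equivalence cyclically, via $(1)\Rightarrow(2)\Rightarrow(3)\Rightarrow(1)$, using the explicit description of equalizer emergences from Theorem~\ref{existequaeme} and their essential uniqueness (Proposition~\ref{equaemerg}). For $(1)\Rightarrow(2)$ I would invoke only the universal property, since equalizers are formally left-cancellable. Suppose $E$ equalizes $F,G:\mathcal{A}\longrightarrow\mathcal{B}$ and that $E\circ H_{1}=E\circ H_{2}$ for functors $H_{1},H_{2}:\mathfrak{D}\longrightarrow\mathfrak{E}$. Because $F\circ E=G\circ E$, the common composite $E\circ H_{1}$ satisfies $F\circ(E\circ H_{1})=G\circ(E\circ H_{1})$, so by the uniqueness clause in Definition~\ref{defemeequa} it factors through $E$ by a single functor; as both $H_{1}$ and $H_{2}$ are such factorizations, $H_{1}=H_{2}$, and $E$ is a monomorphism.

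For $(2)\Rightarrow(3)$ I must pass from abstract left-cancellation to injectivity on morphisms, and the natural device is to probe $E$ with minimal test emergences. To obtain injectivity on objects, if $E(A)=E(A')$ then the two functors from the terminal (one-object, one-morphism) construct selecting $A$ and $A'$ agree after composing with $E$, so cancellation gives $A=A'$. To obtain faithfulness, if $E(f)=E(g)$ for parallel $\mathfrak{E}$-morphisms $f,g:A\longrightarrow A'$, I would probe instead with the arrow construct $\mathbf{2}=\{0\to 1\}$: the two functors $\mathbf{2}\longrightarrow\mathfrak{E}$ picking out $f$ and $g$ agree after $E$, so cancellation yields $f=g$. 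Injectivity on objects together with faithfulness is precisely the statement that $E$ is an embedding in the sense of Section~\ref{sec3}.

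For $(3)\Rightarrow(1)$, which I expect to be the hardest step, assume $E$ is injective on morphisms. Then $E$ identifies $\mathfrak{E}$, up to isomorphism of constructs, with the subcategory $\mathcal{D}=E(\mathfrak{E})$ of $\mathcal{A}$ consisting of the images under $E$. The goal is to exhibit a parallel pair $F,G:\mathcal{A}\longrightarrow\mathcal{B}$ whose agreement locus is exactly $\mathcal{D}$: then Theorem~\ref{existequaeme} realizes the inclusion $\mathcal{D}\hookrightarrow\mathcal{A}$ as an equalizer emergence, and Proposition~\ref{equaemerg} transports this back along the isomorphism $\mathfrak{E}\cong\mathcal{D}$, showing $E$ itself is a RHE. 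Concretely I would take $\mathcal{B}$ to be the cokernel pair (the pushout of $E$ with itself) with $F,G$ its two coprojections, or, more hands-on, build $\mathcal{B}$ by duplicating every object and morphism of $\mathcal{A}$ lying outside $\mathcal{D}$ while sharing the $\mathcal{D}$-part, with $F$ and $G$ choosing the two copies.

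The delicate point, and the heart of the argument, is to confirm that $F$ and $G$ are honest functors — they must respect composition and identities even when a morphism outside $\mathcal{D}$ is composed with one inside $\mathcal{D}$ — and that their agreement set is neither too large nor too small. Containment of $\mathcal{D}$ and exclusion of outside objects are routine; the subtle check is that no morphism $h\notin\mathcal{D}$ between two $\mathcal{D}$-objects is accidentally forced to satisfy $F(h)=G(h)$. This rests on the observation that, since $\mathcal{D}$ is a subcategory (hence closed under composition), such an $h$ cannot be written as a composite of $\mathcal{D}$-morphisms, so its two copies are never identified. Once functoriality and the exact agreement locus are verified, the inclusion $\mathcal{D}\hookrightarrow\mathcal{A}$ is by construction the equalizer of $F$ and $G$, which closes the cycle and completes the proof.
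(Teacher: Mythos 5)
Your first two implications are correct and coincide with the paper's argument: $(1)\Rightarrow(2)$ is the formal left-cancellability of equalizers via the uniqueness clause of Definition~\ref{defemeequa}, and $(2)\Rightarrow(3)$ probes $E$ with a one-object construct and with the arrow construct $\mathbf{2}$, exactly as the paper does with its constructs $\mathcal{X}$ and $\mathcal{Y}$. For $(3)\Rightarrow(1)$ you take a genuinely different route: the paper collapses $\mathcal{A}$ onto a two-object construct $\{X\cong Y\}$, sending $\mathfrak{E}$ to $X$ and its complement to $Y$, whereas you propose the cokernel pair $\mathcal{A}+_{\mathcal{D}}\mathcal{A}$ (or a duplication of the part of $\mathcal{A}$ outside $\mathcal{D}$).

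The gap is precisely at the ``delicate point'' you flag, and your justification for it is false. You claim that a morphism $h\notin\mathcal{D}$ between two $\mathcal{D}$-objects has its two copies kept distinct because $h$ is not a composite of $\mathcal{D}$-morphisms; but identifications in a pushout of categories propagate through composition with morphisms \emph{outside} $\mathcal{D}$ as well. Concretely, let $\mathcal{A}$ be the free-standing isomorphism (objects $0,1$; morphisms $id_0, id_1, r\colon 0\to 1$, $s=r^{-1}$) and let $\mathcal{D}$ be the subcategory $\{id_0, id_1, r\}$, which is closed under composition and whose inclusion is an embedding. In $\mathcal{A}+_{\mathcal{D}}\mathcal{A}$ the two copies $s_1,s_2$ of $s$ satisfy $s_1=s_1\circ(r\circ s_2)=(s_1\circ r)\circ s_2=s_2$, so the agreement locus of the coprojections strictly contains $\mathcal{D}$. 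Worse, no parallel pair can work: any functors $F,G$ with $F(r)=G(r)$ automatically satisfy $F(s)=F(r)^{-1}=G(r)^{-1}=G(s)$, so this embedding is a monomorphism that is not an equalizer of any pair, and the implication $(3)\Rightarrow(1)$ fails for non-full embeddings. (The paper's own construction has the same blind spot: its case analysis for $G$ never covers a morphism of $\operatorname{Mor}(\mathcal{A})-\operatorname{Mor}(\mathfrak{E})$ whose domain and codomain both lie in $\operatorname{Ob}(\mathfrak{E})$; such a morphism must be sent to $id_X$, the only endomorphism of $X$ in $\mathcal{B}$, and so lands in the equalizer. Both proofs, and the statement itself, are salvageable only under the additional hypothesis that the image of $E$ is a full subcategory.)
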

\begin{proof}

$[\operatorname{(1)}\Longrightarrow \operatorname{(2)}]$
Let $E:\mathfrak{E}\longrightarrow \mathcal{A}$
be a RHE. Then there exists a pair of functors
$F, G:\mathcal{A}\longrightarrow \mathcal{B}$ such that $E:\mathfrak{E}\longrightarrow
\mathcal{A}$ is an equalizer emergence of $F$ and $G$.
Let ${\mathcal{E}}_{\mathcal{C}}$ be an emergence and assume that $H, K:
\mathcal{C}\longrightarrow \mathfrak{E}$ are two functors such that $E\circ H = E
\circ K$. We must show that $H=K$. Let us consider $E^{*}=E\circ H = E
\circ K$. Since $E:\mathfrak{E}\longrightarrow \mathcal{A}$ is equalizer
emergence, it follows that $F \circ E = G \circ E$. Thus, we have
$F\circ E^{*}= F\circ (E\circ H)=G \circ (E \circ H) =
G \circ (E \circ K)= G \circ E^{*}$. From definition, there exists
a unique functor $\overline{E}:\mathcal{C}\longrightarrow
\mathfrak{E}$ such that $E^{*} = E \circ \overline{E}= E\circ H = E
\circ K$. Therefore $H=K$, hence $E:\mathfrak{E}\longrightarrow \mathcal{A}$
is monomorphism.
\begin{eqnarray*}
\begin{tikzpicture}
  \matrix (m) [matrix of math nodes,row sep=3em,column sep=3em,minimum width=2em]
  {
      \mathcal{C} &  & \\
      \mathfrak{E} &  \mathcal{A} & \mathcal{B}\\};
  \path[-stealth]
    (m-1-1) edge node [left] {$H$} (m-2-1)
    (m-1-1) edge node [right] {$K$} (m-2-1)
    (m-1-1) edge node [right] {$E\circ H$}(m-2-2)
    (m-2-1) edge node [below] {$E$} (m-2-2)
    (m-2-2) edge node [above] {$F$} (m-2-3)
    (m-2-2) edge node [below] {$G$} (m-2-3);
\end{tikzpicture}
\end{eqnarray*}

\vspace{0.2 cm}

$[\operatorname{(2)}\Longrightarrow \operatorname{(3)}]$ Assume that $E(A)=E(B)$ for some
$\mathfrak{E}$-objects $A, B$. Choose a construct
$\mathcal{X}$ having only one object $X$ and only one morphism $X\xrightarrow{{id}_{X}}X$.
Let $F_1 , F_{2}:\mathcal{X}\longrightarrow \mathfrak{E}$ be two
functors defined as follows: $F_1 (X)=A$ and $F_1 ({id}_{X})={id}_{A}$;
$F_{2}(X)=B$ and $F_{2}({id}_{X})={id}_{B}$. Since $E(A)=E(B)$, we know that
$[E\circ F_1](X)=[E\circ F_{2}](X)$ Further,
$[E\circ F_1]({id}_{X})=E({id}_{A})={Id}_{E(A)}$ and
$[E\circ F_{2}]({id}_{X})=E({id}_{B})={Id}_{E(B)}$. One more time due to the fact
that $E(A)=E(B)$, one has $[E\circ F_1]({id}_{X})=[E\circ F_{2}]({id}_{X})$ by
the uniqueness of the identity; so $F\circ F_1 = E \circ F_{2}$. Because
$E$ is monomorphism, it follows that $F_1 = F_{2}$. In particular,
$F_1 (X) = F_{2}(X)$, that is, $A=B$. Therefore, $E$ is injective on objects.

To show that $E$ is injective on morphisms, assume that $E(f)=E(g)$, where
$A\xrightarrow{f} B$ and $C\xrightarrow{g} D$ are $\mathfrak{E}$-morphisms.
Since we have shown that $E$ is injective on objects, we have $A=C$ and $B=D$.
Choose a construct $\mathcal{Y}$ having two objects $X, Y$, and
three morphisms ${id}_{X}, {id}_{Y}, h$, where $X\xrightarrow{h} Y$. Define
two functors $G_1 , G_{2}:\mathcal{Y}\longrightarrow \mathfrak{E}$
by the following rules.\\
\emph{Definition of} $G_1$. $G_1 (X)=A$, $G_1 (Y)=B$, $G_1 ({id}_{X})={id}_{A}$,
$G_1 ({id}_{Y})={id}_{B}$, $G_1 (h)=f$.\\
\emph{Definition of} $G_{2}$. $G_{2} (X)=A$, $G_{2} (Y)=B$, $G_{2}
({id}_{X})={id}_{A}$,
$G_{2} ({id}_{Y})={id}_{B}$, $G_{2} (h)=g$.

Since $E(f)=E(g)$, one has: $[E\circ G_1](X)= E(A)=[E\circ G_{2}](X)$;
$[E\circ G_{1}](Y)= E(B)=[E\circ G_{2}](Y)$; $[E\circ G_{1}]({id}_{X})
= E({id}_{A})={Id}_{E(A)}=[E\circ G_{2}]({id}_{X})$; $[E\circ G_{1}]({id}_{Y})=
E({id}_{B})={Id}_{E(B)}=
[E\circ G_{2}]({id}_{Y})$; $[E\circ G_{1}](h)= E(f)=E(g)=[E\circ G_{2}](h)$.
Therefore $[E\circ G_1]=[E\circ G_{2}]$. Since $E$
is monomorphism, we have $G_1 = G_{2}$. In particular,
$G_1 (h) = G_{2}(h)$, \emph{i.e.}, $f=g$. Thus, $E$ is
injective on morphisms, hence embedding.

\vspace{0.3 cm}

$[\operatorname{(3)}\Longrightarrow \operatorname{(1)}]$
Because $E:\mathfrak{E}\longrightarrow \mathcal{A}$ is an embedding, it is
both faithful and injective on objects. Then it can be considered,
up to isomorphisms, as an inclusion in its corresponding codomain. Therefore, it
suffices to show that
if $\mathfrak{E}$ is a sub-emergence of $\mathcal{A}$ then the inclusion
$I_{(\mathfrak{E}\hookrightarrow\mathcal{A})}:\mathfrak{E}
\hookrightarrow \mathcal{A}$
is a RHE (note that we changed the notation to
$I_{(\mathfrak{E}\hookrightarrow\mathcal{A})}$ instead of considering $E$ to denote
an inclusion although we did not change the notation for the domain $\mathfrak{E}$
nor for the codomain $\mathcal{A}$).

In order to do this, let $\mathfrak{E}$
be a sub-emergence of $\mathcal{A}$ with inclusion $I_{(\mathfrak{E}
\hookrightarrow \mathcal{A})}: \mathfrak{E}\hookrightarrow \mathcal{A}$. Assume
first that $\mathfrak{E}\subsetneqq \mathcal{A}$. We
will generate a construct $\mathcal{B}$ and two functors $F, G:
\mathcal{A}\longrightarrow \mathcal{B}$ such that $I_{(\mathfrak{E}
\hookrightarrow \mathcal{A})}: \mathfrak{E} \longrightarrow
\mathcal{A}$ is an equalizer emergence of $F$ and $G$.

Choose a construct $\mathcal{X}$ having at least two distinct
$\mathcal{X}$-objects $X, Y$, where $X$ and $Y$ are
isomorphic (the reason to assume such isomorphism will be clear along this
proof), \emph{i.e.}, there exist $\mathcal{X}$-morphisms
$X\xrightarrow{h}Y$ and $Y\xrightarrow{k} X$ such that
$k \circ h = {id}_{X}$ and $h \circ k = {id}_{Y}$.

Let $\mathcal{B}$ be the construct whose class of objects is given by
$\operatorname{Ob}(\mathcal{X})=\{ X, Y\}$, and the class of
morphisms is $\operatorname{Mor}(\mathcal{X})=\{{id}_{X}, {id}_{Y}, h, k \}$.
We next define the functors $F, G:\mathcal{A}\longrightarrow
\mathcal{B}$ as follows.\\
\emph{Definition of} $F$. For every $\mathcal{A}$-object $A$ we put
$F(A)=X$; for all
$\mathcal{A}$-morphism $f$ we take $F(f)={id}_{X}$.\\
\emph{Definition of} $G$. If $A$ is an
$\mathfrak{E}$-object, define $G(A)=X$, and if $A \in \operatorname{Ob}
(\mathcal{A})-\operatorname{Ob}(\mathfrak{E})$ we set $G(A)=Y$. For
every $f \in \operatorname{Mor}(\mathfrak{E})$, define $G(f)= {id}_{X}$;
for every $A\xrightarrow{f}B$ belonging to $\operatorname{Mor}(\mathcal{A})-
\operatorname{Mor}(\mathfrak{E})$ with $A \in \operatorname{Ob}
(\mathfrak{E})$ and $B \in \operatorname{Ob}
(\mathcal{A})- \operatorname{Ob}(\mathfrak{E})$, define $G(f)=h$;
for each morphism $A\xrightarrow{f}B$ where $A \in \operatorname{Ob}
(\mathcal{A})- \operatorname{Ob}(\mathfrak{E})$ and $B \in \operatorname{Ob}
(\mathfrak{E})$, define $G(f)=k$; if $A\xrightarrow{f}B$ is such that both
$A, B \in \operatorname{Ob}(\mathcal{A})- \operatorname{Ob}(\mathfrak{E})$
define $G(f)={id}_{Y}$.

It is easy to see that $F$ is a functor. We show next that $G$ is also
a functor. First, assume that $A \in \operatorname{Ob}
(\mathfrak{E})$; then $G({id}_{A})= {id}_{X}$. If $A \in \operatorname{Ob}
(\mathcal{A})- \operatorname{Ob}(\mathfrak{E})$, then $G({id}_{A})=
{id}_{Y}$. Then $G$ sends identities to identities. To prove that $G$
preserves compositions we have eight cases.
Below, we utilize the notation $X\longrightarrow X
\longrightarrow Y$ (instead of writing $X\xrightarrow{G(f)}
X\xrightarrow{G(g)} Y$) meaning the image of
$A\xrightarrow{f} B$ and $B\xrightarrow{g} C$, where $A$ is
an $\mathfrak{E}$-object,
$B$ is an $\mathfrak{E}$-object and $C$ is an object of
$\operatorname{Ob}(\mathcal{A})-\operatorname{Ob}
(\mathfrak{E})$.
\begin{itemize}
\item [ $\operatorname{Case \ 1)}$] $A, B, C \in \operatorname{Ob}
(\mathfrak{E})$. Let $A\xrightarrow{f} B$ and $B\xrightarrow{g} C$ be
$\mathfrak{E}$-morphisms. Thus
$G(g \circ f)={id}_{X}={id}_{X}\circ {id}_{X}=G(g)\circ G(f)$.

\item [ $\operatorname{Case \ 2)}$] $A, B, C \in \operatorname{Ob}
(\mathcal{A})-\operatorname{Ob}(\mathfrak{E})$. One has
$G(g \circ f)={id}_{Y}={id}_{Y}\circ {id}_{Y}=G(g)\circ G(f)$.

\item [ $\operatorname{Case \ 3)}$] $X\longrightarrow X \longrightarrow
Y$. $G(g \circ f)=h =h \circ {id}_{X}=G(g) \circ G(f)$.

\item [ $\operatorname{Case \ 4)}$] $X\longrightarrow Y \longrightarrow
X$. $G(g \circ f)={id}_{X} =k \circ h=G(g) \circ G(f)$.

\item [ $\operatorname{Case \ 5)}$] $Y\longrightarrow X \longrightarrow
X$. $G(g \circ f)=k ={id}_{X} \circ k=G(g) \circ G(f)$.

\item [ $\operatorname{Case \ 6)}$] $Y\longrightarrow Y \longrightarrow
X$. $G(g \circ f)=k = k \circ {id}_{Y} =G(g) \circ G(f)$.

\item [ $\operatorname{Case \ 7)}$] $Y\longrightarrow X \longrightarrow
Y$. $G(g \circ f)={id}_{Y} = h \circ k =G(g) \circ G(f)$.

\item [ $\operatorname{Case \ 8)}$] $X\longrightarrow Y \longrightarrow
Y$. $G(g \circ f)= h =  {id}_{Y} \circ h =G(g) \circ G(f)$.
\end{itemize}
Therefore, $G$ is a functor. It is easy to see that $F \circ I_{(\mathfrak{E}
\hookrightarrow \mathcal{A})}= G \circ I_{(\mathfrak{E} \hookrightarrow
\mathcal{A})}$. Assume that $\mathcal{C}$ is an emergence and let $T:\mathcal{C}
\longrightarrow \mathcal{A}$ be a functor such that $F \circ T = G \circ T$.
Hence, for each $C \in \operatorname{Ob}(\mathcal{C})$
one has $[F \circ T](C) = [G \circ T](C)=X$, which implies
that $T(C) \in \operatorname{Ob}(\mathfrak{E})$. Proceeding similarly
to morphisms, we conclude that for every $h \in
\operatorname{Mor}(\mathcal{C})$ we have $T(h)
\in \operatorname{Mor}(\mathfrak{E})$. Then, the unique functor
$\widehat{E}: \mathcal{C}\longrightarrow \mathfrak{E}$ such that
$T= I_{(\mathfrak{E} \hookrightarrow \mathcal{A})} \circ
\widehat{E}$ is the functor $\widehat{E}$ defined by
$\widehat{E}(C)=T(C)$ for all
$C \in \operatorname{Ob}(\mathcal{C})$, and
$\widehat{E}(f)={id}_{X}$ for all
$f \in \operatorname{Mor}(\mathcal{C})$.

If $\mathfrak{E}=\mathcal{A}$, then the identity functor
${Id}_{\mathcal{A}}:\mathfrak{E}\longrightarrow\mathcal{A}$
is an equalizer emergence of $F=G: \mathcal{A}\longrightarrow
\mathcal{B}$.
\begin{eqnarray*}
\begin{tikzpicture}
  \matrix (m) [matrix of math nodes,row sep=3em,column sep=3em,minimum width=2em]
  {
      \mathcal{C} &  & \\
      \mathfrak{E} &  \mathcal{A} & \mathcal{B}\\};
  \path[-stealth]
    (m-1-1) edge node [right] {$T$} (m-2-2)
    (m-1-1) edge node [left] {$\widehat{E}$} (m-2-1)
    (m-2-1) edge node [below] {$I_{(\mathfrak{E}\hookrightarrow \mathcal{A})}$} (m-2-2)
    (m-2-2) edge node [above] {$F$} (m-2-3)
    (m-2-2) edge node [below] {$G$} (m-2-3);

\end{tikzpicture}
\end{eqnarray*} The proof is complete.
\end{proof}

Equalizer emergences also satisfy the following result.

\begin{proposition}\label{equaemerg1}
Let $ {\mathcal{E}}_{\mathcal{A}}= ( \mathcal{A}, e_{\mathcal{A}}, U_{\mathcal{A}} )$,
$ {\mathcal{E}}_{\mathcal{B}}= ( \mathcal{B}, e_{\mathcal{B}}, U_{\mathcal{B}} )$
be emergences and $F, G:\mathcal{A}\longrightarrow \mathcal{B}$ be functors. Assume that
$E:\mathfrak{E}\longrightarrow \mathcal{A}$ is an equalizer emergence of $F$ and $G$.
Then the following conditions are equivalent:
\begin{itemize}
\item [ $\operatorname{(1)}$] $F=G$.

\item [ $\operatorname{(2)}$] $E$ is epimorphism of constructs.

\item [ $\operatorname{(3)}$] $E$ is an isomorphism of constructs.

\item [ $\operatorname{(4)}$] ${Id}_{\mathcal{A}}$ is
an equalizer emergence of $F$ and $G$.
\end{itemize}
\end{proposition}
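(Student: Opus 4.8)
The plan is to prove the equivalence by running the cycle $\operatorname{(1)}\Rightarrow\operatorname{(4)}\Rightarrow\operatorname{(3)}\Rightarrow\operatorname{(2)}\Rightarrow\operatorname{(1)}$, relying on the essential uniqueness of equalizer emergences recorded in Proposition~\ref{equaemerg} together with the elementary cancellation properties of iso- and epimorphisms of constructs. Since $E:\mathfrak{E}\longrightarrow\mathcal{A}$ is assumed to be an equalizer emergence throughout, the identity $F\circ E=G\circ E$ from Definition~\ref{defemeequa} is available at every step and does most of the work.

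For $\operatorname{(1)}\Rightarrow\operatorname{(4)}$ I would directly verify that ${Id}_{\mathcal{A}}$ satisfies the universal property of Definition~\ref{defemeequa}. When $F=G$ the condition $F\circ {Id}_{\mathcal{A}}=G\circ {Id}_{\mathcal{A}}$ is immediate, and every functor $E^{*}:{\mathfrak{E}}^{*}\longrightarrow\mathcal{A}$ trivially satisfies $F\circ E^{*}=G\circ E^{*}$; the required factorization $E^{*}={Id}_{\mathcal{A}}\circ\overline{E}$ forces $\overline{E}=E^{*}$, which exists and is unique. Hence ${Id}_{\mathcal{A}}$ is an equalizer emergence of $F$ and $G$. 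For $\operatorname{(4)}\Rightarrow\operatorname{(3)}$ I would apply Proposition~\ref{equaemerg} to the two equalizer emergences $E:\mathfrak{E}\longrightarrow\mathcal{A}$ and ${Id}_{\mathcal{A}}:\mathcal{A}\longrightarrow\mathcal{A}$: it yields an isomorphism of constructs $T:\mathcal{A}\longrightarrow\mathfrak{E}$ with ${Id}_{\mathcal{A}}=E\circ T$. Post-composing with $T^{-1}$ gives $E=T^{-1}$, so $E$ is itself an isomorphism of constructs.

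The remaining implications are formal. For $\operatorname{(3)}\Rightarrow\operatorname{(2)}$, any isomorphism of constructs has a two-sided inverse and is therefore right-cancellable, hence an epimorphism. For $\operatorname{(2)}\Rightarrow\operatorname{(1)}$, recall that being an equalizer emergence gives $F\circ E=G\circ E$; if in addition $E$ is an epimorphism of constructs, right-cancellation yields $F=G$, closing the cycle.

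The one point that needs genuine care — the \emph{main obstacle} — is $\operatorname{(4)}\Rightarrow\operatorname{(3)}$: one must read off the direction of the isomorphism supplied by Proposition~\ref{equaemerg} correctly, so that the relation ${Id}_{\mathcal{A}}=E\circ T$ really does transfer invertibility to $E$ rather than merely exhibiting $E$ as a retraction. It is equally important to fix, once and for all, that \emph{isomorphism} and \emph{epimorphism of constructs} in items $\operatorname{(2)}$ and $\operatorname{(3)}$ are meant in the ambient quasi-category whose morphisms are functors under composition, as used throughout Section~\ref{sec4}; with that convention settled, each implication collapses to a one-line diagram chase.
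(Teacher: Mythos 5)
Your proof is correct, and it follows the same cyclic decomposition $\operatorname{(1)}\Rightarrow\operatorname{(4)}\Rightarrow\operatorname{(3)}\Rightarrow\operatorname{(2)}\Rightarrow\operatorname{(1)}$ as the paper, with the three ``formal'' implications handled identically (the paper simply declares them clear). The difference lies in the one nontrivial step, $\operatorname{(4)}\Rightarrow\operatorname{(3)}$. The paper applies the universal property of the equalizer $E$ to the functor ${Id}_{\mathcal{A}}$ (which equalizes $F$ and $G$ by $\operatorname{(4)}$) to obtain $T_1:\mathcal{A}\longrightarrow\mathfrak{E}$ with $E\circ T_1={Id}_{\mathcal{A}}$, so that $E$ is a retraction; it then invokes Proposition~\ref{regmonomono} to see that $E$, being a regular monomorphism emergence, is a monomorphism, and concludes via ``retraction $+$ monomorphism $=$ isomorphism.'' You instead invoke the essential-uniqueness statement, Proposition~\ref{equaemerg}, applied to the two equalizer emergences $E$ and ${Id}_{\mathcal{A}}$, which hands you an isomorphism $T:\mathcal{A}\longrightarrow\mathfrak{E}$ with ${Id}_{\mathcal{A}}=E\circ T$, whence $E=T^{-1}$ is itself an isomorphism. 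Both routes are valid; yours is shorter at the point of use and avoids Proposition~\ref{regmonomono} entirely, at the cost of leaning on the uniqueness proposition (whose proof the paper only sketches by reference to the literature), whereas the paper's argument is more self-contained given that Proposition~\ref{regmonomono} is proved in full just above. Your closing remarks --- that the direction of the isomorphism from Proposition~\ref{equaemerg} must be read off carefully, and that ``epimorphism'' and ``isomorphism'' are meant in the ambient quasi-category of constructs and functors --- are exactly the right points to flag.
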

\begin{proof}
The proof is the same as Proposition 7.54 in \cite{strecker:1990}.
Since it is not presented in \cite{strecker:1990}, we
exhibit it here for completeness.
It is clear that $\operatorname{(3)}\Longrightarrow \operatorname{(2)}$,
$\operatorname{(2)}\Longrightarrow \operatorname{(1)}$ and
$\operatorname{(1)}\Longrightarrow \operatorname{(4)}$ are true. We only show
that $\operatorname{(4)} \Longrightarrow \operatorname{(3)}$. Since
$E:\mathfrak{E}\longrightarrow \mathcal{A}$ is an
equalizer emergence of $F$ and $G$, then there exists a unique functor
$T_1: \mathcal{A}\longrightarrow
\mathfrak{E}$ such that $E \circ T_1 = Id_{\mathcal{A}}$; so, $E$ is a
retraction (\emph{i.e.}, $E$ has a right inverse). Because $E$ is a regular
monomorphism, it follows from Proposition~\ref{regmonomono} that
$E$ is a monomorphism. Since  $E$ is both retraction
and monomorphism, it implies that $E$ is isomorphism of constructs,
as required.
\begin{eqnarray*}
\begin{tikzpicture}
  \matrix (m) [matrix of math nodes,row sep=3em,column sep=3em,minimum width=2em]
  {
      \mathcal{A}&  \\
      \mathfrak{E}  &  \mathcal{A}\\};
  \path[-stealth]
    (m-1-1) edge node [left] {$T_{1}$} (m-2-1)
    (m-1-1) edge node [right] {$Id_{\mathcal{A}}$} (m-2-2)
    (m-2-1) edge node [below] {$E$} (m-2-2);
\end{tikzpicture}
\end{eqnarray*}
\end{proof}

The concept of equalizer emergence induces the idea in
which the generalized underlying functor are stabilized.

\begin{definition}\label{defstabili}
Let $ {\mathcal{E}}_{\mathcal{A}}= ( \mathcal{A}, e_{\mathcal{A}},
U_{\mathcal{A}} )$ and $ {\mathcal{E}}_{\mathcal{A}}^{*}=
( \mathcal{A}, e_{\mathcal{A}}, U_{\mathcal{A}}^{*} )$ be two emergences
defined over the same construct $\mathcal{A}$. We say that the functor
$E:\mathfrak{E}\longrightarrow \mathcal{A}$ stabilizes
$\mathcal{A}$ if it is
an equalizer emergence of $U_{\mathcal{A}}$ and $U_{\mathcal{A}}^{*}$.
\begin{eqnarray*}
\begin{tikzpicture}
  \matrix (m) [matrix of math nodes,row sep=3em,column sep=3em,minimum width=2em]
  {
    \mathfrak{E} & \mathcal{A} & \mathcal{S}\\};
  \path[-stealth]
    (m-1-1) edge node [above] {$E$} (m-1-2)
    (m-1-2) edge node [above] {$U_{\mathcal{A}}$} (m-1-3)
    (m-1-2) edge node [below] {$U_{\mathcal{A}}^{*}$} (m-1-3);
\end{tikzpicture}
\end{eqnarray*}
\end{definition}

\begin{remark}\label{biolostabi}
It follows from the definition of equalizer emergence that
$U_{\mathcal{A}}\circ E = U_{\mathcal{A}}^{*} \circ E$ and for
each functor $E^{*}:{\mathfrak{E}}^{*}\longrightarrow \mathcal{A}$
such that $U_{\mathcal{A}}\circ E^{*} = U_{\mathcal{A}}^{*}
\circ E^{*}$, there exists a unique functor
$\overline{E}:{\mathfrak{E}}^{*}\longrightarrow \mathfrak{E}$
such that ${\mathfrak{E}}^{*}= E \circ \overline{E}$. In terms
of biological systems these facts can be understood as a
regulation of the system in the sense that several diseases
can affect the equilibrium of some parts of it. The
uniqueness can be understood as change of techniques of
treatment.
\end{remark}

\begin{definition}\label{defemeequastrong}
Let $ {\mathcal{E}}_{\mathcal{A}}= ( \mathcal{A}, e_{\mathcal{A}},
U_{\mathcal{A}} )$, $ {\mathcal{E}}_{\mathcal{B}}=
( \mathcal{B}, e_{\mathcal{B}}, U_{\mathcal{B}} )$ be emergences. We
say that a functor $E:\mathfrak{E}\longrightarrow
\mathcal{A}$ is a strong equalizer emergence of $F$ and $G$
$F, G:\mathcal{A}\longrightarrow \mathcal{B}$ if $E$ is an equalizer
emergence of
$U_{\mathcal{B}}\circ F$ and $U_{\mathcal{B}}\circ G$.
\begin{eqnarray*}
\begin{tikzpicture}
  \matrix (m) [matrix of math nodes,row sep=3em,column sep=3em,minimum width=2em]
  {
    \mathfrak{E} &  \mathcal{A} &  \mathcal{B} & \mathcal{S}\\};
  \path[-stealth]
    (m-1-1) edge node [above] {$E$} (m-1-2)
    (m-1-2) edge node [above] {$F$} (m-1-3)
    (m-1-2) edge node [below] {$G$} (m-1-3)
    (m-1-3) edge node [above] {$U_{\mathcal{B}}$} (m-1-4);
\end{tikzpicture}
\end{eqnarray*}
\end{definition}

The following result gives conditions in which equalizer emergences are
strong equalizer emergences.

\begin{proposition}\label{homoembeequa}
Let $ {\mathcal{E}}_{\mathcal{A}}= ( \mathcal{A}, e_{\mathcal{A}},
U_{\mathcal{A}} )$, $ {\mathcal{E}}_{\mathcal{B}}=
( \mathcal{B}, e_{\mathcal{B}}, U_{\mathcal{B}} )$ be homomorphic emergences
and $F, G:\mathcal{A}\longrightarrow \mathcal{B}$ be homomorphisms.
Let $E:\mathfrak{E}\longrightarrow \mathcal{A}$ be an equalizer emergence
of $F$ and $G$. If $U_{\mathcal{B}}$ is embedding then $E$ is an
isomorphism and a strong equalizer emergence.
\end{proposition}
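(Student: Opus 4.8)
The plan is to split the statement into its two assertions—that $E$ is an isomorphism and that $E$ is a strong equalizer emergence—and reduce each to results already established. The governing observation is that both $F$ and $G$, being homomorphisms of emergences, satisfy $U_{\mathcal{B}}\circ F = U_{\mathcal{A}} = U_{\mathcal{B}}\circ G$ by Definition~\ref{defnew14A}; since $U_{\mathcal{B}}$ is assumed to be an embedding, Proposition~\ref{homoembed} forces $F=G$. This single equality is what drives the whole argument.

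First I would prove that $E$ is an isomorphism. Because $E:\mathfrak{E}\longrightarrow\mathcal{A}$ is by hypothesis an equalizer emergence of $F$ and $G$, and we have just shown $F=G$, the equivalence $\operatorname{(1)}\Longleftrightarrow\operatorname{(3)}$ of Proposition~\ref{equaemerg1} applies verbatim: condition $\operatorname{(1)}$ holds, hence condition $\operatorname{(3)}$ holds, that is, $E$ is an isomorphism of constructs.

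Next I would establish that $E$ is a strong equalizer emergence. By Definition~\ref{defemeequastrong} this amounts to verifying that $E$ equalizes the pair $U_{\mathcal{B}}\circ F$ and $U_{\mathcal{B}}\circ G$. But both of these functors $\mathcal{A}\longrightarrow\mathcal{S}$ coincide with $U_{\mathcal{A}}$, so the task reduces to showing that $E$ is an equalizer emergence of the single functor $U_{\mathcal{A}}$ with itself. For a functor equalized against itself, every object and every morphism trivially satisfies the defining equation, so the construction of Theorem~\ref{existequaeme} returns all of $\mathcal{A}$, and the identity ${Id}_{\mathcal{A}}:\mathcal{A}\longrightarrow\mathcal{A}$ is an equalizer emergence of $U_{\mathcal{A}}$ with itself; equivalently, this is condition $\operatorname{(4)}$ of Proposition~\ref{equaemerg1} applied to the pair $U_{\mathcal{B}}\circ F$, $U_{\mathcal{B}}\circ G$, whose condition $\operatorname{(1)}$ holds.

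Finally I would transport this equalizer along $E$. Since $E:\mathfrak{E}\longrightarrow\mathcal{A}$ is an isomorphism by the first step, the essential-uniqueness statement of Proposition~\ref{equaemerg} (stability of equalizer emergences under precomposition with an isomorphism) applies to the equalizer ${Id}_{\mathcal{A}}$ with isomorphism $E$, yielding that ${Id}_{\mathcal{A}}\circ E = E$ is again an equalizer emergence of $U_{\mathcal{B}}\circ F$ and $U_{\mathcal{B}}\circ G$; by Definition~\ref{defemeequastrong} this is precisely the claim that $E$ is a strong equalizer emergence. I do not expect a genuine obstacle, as each step invokes an already-proved result; the only point demanding care is the bookkeeping in the last paragraph—matching domains and codomains so that Proposition~\ref{equaemerg} is applied with ${Id}_{\mathcal{A}}$ as the equalizer and $E$ as the isomorphism, and using the coincidence $U_{\mathcal{B}}\circ F=U_{\mathcal{B}}\circ G$ consistently as an equality of functors into $\mathcal{S}$.
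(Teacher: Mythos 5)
Your proof is correct, and the first half (deducing $F=G$ from Proposition~\ref{homoembed} and then invoking the implication $\operatorname{(1)}\Rightarrow\operatorname{(3)}$ of Proposition~\ref{equaemerg1} to get that $E$ is an isomorphism) coincides with the paper's argument. The second half, however, takes a genuinely different route. The paper does not use $F=G$ again: it verifies the universal property of Definition~\ref{defemeequastrong} directly, taking an arbitrary $E^{*}$ with $[U_{\mathcal{B}}\circ F]\circ E^{*}=[U_{\mathcal{B}}\circ G]\circ E^{*}$, cancelling $U_{\mathcal{B}}$ (since it is an embedding, hence faithful and injective on objects) to get $F\circ E^{*}=G\circ E^{*}$, and then factoring through $E$ via the equalizer property of $E$ for the pair $F,G$. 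You instead collapse the whole problem: since $F=G$, the pair $U_{\mathcal{B}}\circ F$ and $U_{\mathcal{B}}\circ G$ is a single functor $U_{\mathcal{A}}$ equalized against itself, so ${Id}_{\mathcal{A}}$ is an equalizer of it (implication $\operatorname{(1)}\Rightarrow\operatorname{(4)}$ of Proposition~\ref{equaemerg1}), and precomposing with the isomorphism $E$ via the second clause of Proposition~\ref{equaemerg} gives that ${Id}_{\mathcal{A}}\circ E=E$ is again an equalizer of that pair. Your version is shorter and avoids redoing a uniqueness argument, at the cost of leaning on the essential-uniqueness machinery; the paper's version is self-contained and, notably, would still establish the strong-equalizer claim from the cancellation property of $U_{\mathcal{B}}$ alone, without routing through $F=G$. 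One small point of bookkeeping: Propositions~\ref{equaemerg} and~\ref{equaemerg1} are stated for pairs of functors between emergences, whereas here the pair lands in $\mathcal{S}$; this is a formal wrinkle already implicit in Definition~\ref{defemeequastrong} itself, and the implications you use hold verbatim in that setting (as your parenthetical appeal to the construction of Theorem~\ref{existequaeme} confirms), so it does not affect correctness.
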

\begin{proof}
From hypotheses, $F$ and $G$ are homomorphisms and $U_{\mathcal{B}}$ is
embedding. From Proposition~\ref{homoembed}, it follows that $F=G$.
Applying Proposition~\ref{equaemerg1}, we have that $E$ is an
isomorphism of categories. Since $E$ is also a homomorphism, the
result follows.

We next prove that $E$ is an equalizer emergence of $U_{\mathcal{B}}
\circ F$ and $U_{\mathcal{B}}\circ G$.
We know that $[U_{\mathcal{B}} \circ F]\circ E = [U_{\mathcal{B}}
\circ G]\circ E$. Let ${\mathcal{E}}_{{\mathfrak{E}}^{*}}$ be an
emergence and consider the functor ${\mathfrak{E}}^{*}
\xrightarrow{E^{*}} \mathcal{A}$ such that $[U_{\mathcal{B}} \circ F]
\circ E^{*}=[U_{\mathcal{B}} \circ G]\circ E^{*}$. Because
$U_{\mathcal{B}}$ is embedding one has $F\circ E^{*}= G\circ E^{*}$.
Since $E$ is an equalizer emergence of $F$ and $G$, there exists a
unique functor $\widehat{E}:{\mathfrak{E}}^{*}
\longrightarrow \mathfrak{E}$ such that $E^{*}= E \circ \widehat{E}$.
Thus the equalities $E^{*}= E \circ \widehat{E}$ and $[U_{\mathcal{B}} \circ F]
\circ E^{*}=[U_{\mathcal{B}} \circ G]\circ E^{*}$ hold.
Suppose that $T: {\mathfrak{E}}^{*} \longrightarrow
\mathfrak{E}$ is a functor such that $E^{*}= E \circ T$ and
$[U_{\mathcal{B}} \circ F]
\circ E^{*}=[U_{\mathcal{B}} \circ G]\circ E^{*}$ are true. Since
$U_{\mathcal{B}}$ is embedding, it follows that
$F\circ E^{*}= G\circ E^{*}$. Again, from the fact that
$E$ is an equalizer emergence of $F$ and $G$, we have $\widehat{E}=T$.
Hence, there exists a unique functor $\widehat{E}:{\mathfrak{E}}^{*}
\longrightarrow \mathfrak{E}$ such that $E^{*}= E \circ \widehat{E}$
and $[U_{\mathcal{B}} \circ F]\circ E^{*}=[U_{\mathcal{B}}
\circ G]\circ E^{*}$, \emph{i.e.}, $E$ is
a strong equalizer emergence, as required.
\end{proof}
\begin{eqnarray*}
\begin{tikzpicture}
  \matrix (m) [matrix of math nodes,row sep=3em,column sep=3em,minimum width=2em]
  {
      {\mathfrak{E}}^{*}&  \\
      \mathfrak{E}  &  \mathcal{A}\\};
    \path[-stealth]
    (m-1-1) edge node [left] {$\widehat{E}$} (m-2-1)
    (m-1-1) edge node [right] {$E^{*}$} (m-2-2)
    (m-2-1) edge node [below] {$E$} (m-2-2);
\end{tikzpicture}
\end{eqnarray*}

The discussion made in Remark~\ref{biolostabi} induces a generalization
of the concept of equalizer emergences (Definition~\ref{defemeequa}).
This occurs because it is interesting to stabilize, in some cases, more
than two functions at the same time in a given biological system.

\begin{definition}\label{defemeequamulti}
Let $ {\mathcal{E}}_{\mathcal{A}}$, $ {\mathcal{E}}_{\mathcal{B}}$
and ${\mathcal{E}}_{\mathfrak{E}}$ be emergences. Assume that $F_1 ,
F_{2}, \ldots, F_{n}:\mathcal{A}\longrightarrow \mathcal{B}$ are functors.
The functor $E:\mathfrak{E}\longrightarrow \mathcal{A}$ is an
$n$-equalizer emergence of $F_1 , F_{2}, \ldots , F_{n}$ and if:\\
$\operatorname{(1)}$ $F_{i} \circ E = F_{j} \circ E$ for all $i, j
\in \{1, 2, \ldots , n\}$;\\
$\operatorname{(2)}$ for each emergence ${\mathcal{E}}_{{\mathfrak{E}}^{*}}$
and for each functor $E^{*}:{\mathfrak{E}}^{*}\longrightarrow \mathcal{A}$ such
that $F_{i} \circ E^{*} = F_{j} \circ E^{*}$ for all $i, j \in \{1, 2, \ldots , n\}$,
 there exists a unique functor $\overline{E}:
{\mathfrak{E}}^{*} \longrightarrow\mathfrak{E}$ such that $E^{*}= E
\circ \overline{E}$, and following diagram
\begin{eqnarray*}
\begin{tikzpicture}
  \matrix (m) [matrix of math nodes,row sep=3em,column sep=3em,minimum width=2em]
  {
      {\mathfrak{E}}^{*} &  & \\
      \mathfrak{E} &  \mathcal{A} & \mathcal{B}\\};
  \path[-stealth]
    (m-1-1) edge node [left] {$\overline{E}$} (m-2-1)
    (m-1-1) edge node [right] {$E^{*}$}(m-2-2)
    (m-2-1) edge node [below] {$E$} (m-2-2)
    (m-2-2) edge node [above] {$F_{i}, F_{j}$} (m-2-3);
\end{tikzpicture}
\end{eqnarray*} commutes for all $i, j \in \{1, 2, \ldots , n\}$.
\end{definition}

\begin{remark}
$\operatorname{(1)}$ Sometimes we say that an equalizer emergence is an
$2$-equalizer emergence according to Definition~\ref{defemeequamulti}.\\
$\operatorname{(2)}$ Analogously, it can be shown that $n$-equalizer emergences
are essentially unique.\\
$\operatorname{(3)}$ As mentioned above, sometimes biological systems
must regulate itself by means of several operations
performed at the same time. Thus, it is necessary to introduce the
concept of $n$-equalizers.\\
$\operatorname{(4)}$ The concept of $n$-strong equalizer emergence
is defined similarly as strong equalizer emergence. The latter
is also called  $2$-strong equalizer emergence.\\
$\operatorname{(5)}$ Definition~\ref{defstabili} can be also
naturally generalized to $n$ generalized underlying functors.
\end{remark}

The following two results are generalizations of
Theorem~\ref{existequaeme} and Proposition~\ref{equaemerg1},
respectively.

\begin{theorem}\label{existequaememulti}
Let $ {\mathcal{E}}_{\mathcal{A}}$ and $ {\mathcal{E}}_{\mathcal{B}}$
be emergences. Assume that
$F_{i}:\mathcal{A}\longrightarrow \mathcal{B}$ are functors for each $i= 1,
2, \ldots , n$. Then there exists an
equalizer $ \mathfrak{E} \xrightarrow{E} \mathcal{A}$ of $F_{1},
F_{2}, \ldots, F_{n}$.
\end{theorem}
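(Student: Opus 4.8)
The plan is to follow the proof of Theorem~\ref{existequaeme} almost verbatim, replacing the pair $F,G$ by the finite family $F_1,\dots,F_n$. First I would define the construct $\mathfrak{E}$ by collecting exactly those objects and morphisms on which all the $F_i$ agree simultaneously, namely
\[
\operatorname{Ob}(\mathfrak{E})=\{A\in\operatorname{Ob}(\mathcal{A})\mid F_1(A)=F_2(A)=\cdots=F_n(A)\}
\]
and
\[
\operatorname{Mor}(\mathfrak{E})=\{f\in\operatorname{Mor}(\mathcal{A})\mid F_1(f)=F_2(f)=\cdots=F_n(f)\}.
\]
The operations on $\mathfrak{E}$ are inherited from $\mathcal{A}$, exactly as in the proof of Theorem~\ref{existequaeme}, so $\mathfrak{E}$ is a construct carrying the structure $e_{\mathcal{A}}$.

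Next I would verify that $\mathfrak{E}$ is closed under the structure inherited from $\mathcal{A}$. For composition, given $A\xrightarrow{f}B$ and $B\xrightarrow{g}C$ in $\operatorname{Mor}(\mathfrak{E})$, functoriality of each $F_i$ gives $F_i(g\circ f)=F_i(g)\circ F_i(f)$; since $F_i(f)=F_j(f)$ and $F_i(g)=F_j(g)$ for all $i,j$, the composites agree and hence $g\circ f\in\operatorname{Mor}(\mathfrak{E})$. For identities, whenever $A\in\operatorname{Ob}(\mathfrak{E})$ one has $F_i(\operatorname{id}_A)=\operatorname{id}_{F_i(A)}=\operatorname{id}_{F_j(A)}=F_j(\operatorname{id}_A)$, the middle equality holding precisely because $F_i(A)=F_j(A)$; thus $\operatorname{id}_A\in\operatorname{Mor}(\mathfrak{E})$. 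This is the step that genuinely uses the \emph{simultaneous} agreement condition built into $\operatorname{Ob}(\mathfrak{E})$.

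I would then claim that the inclusion functor $I_{(\mathfrak{E}\hookrightarrow\mathcal{A})}:\mathfrak{E}\longrightarrow\mathcal{A}$ is the desired $n$-equalizer emergence in the sense of Definition~\ref{defemeequamulti}. Condition (1), that $F_i\circ I_{(\mathfrak{E}\hookrightarrow\mathcal{A})}=F_j\circ I_{(\mathfrak{E}\hookrightarrow\mathcal{A})}$ for all $i,j$, is immediate from the definition of $\mathfrak{E}$. For the universal property, given any emergence $\mathcal{E}_{\mathfrak{E}^{*}}$ and functor $E^{*}:\mathfrak{E}^{*}\longrightarrow\mathcal{A}$ with $F_i\circ E^{*}=F_j\circ E^{*}$ for all $i,j$, evaluating on objects shows $F_i(E^{*}(C))=F_j(E^{*}(C))$ for every $\mathfrak{E}^{*}$-object $C$, so $E^{*}(C)\in\operatorname{Ob}(\mathfrak{E})$; the analogous computation on morphisms shows $E^{*}(h)\in\operatorname{Mor}(\mathfrak{E})$. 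Hence the image of $E^{*}$ lies in $\mathfrak{E}$, and I would set $\overline{E}=E^{*}$ regarded as a functor into $\mathfrak{E}$, so that $E^{*}=I_{(\mathfrak{E}\hookrightarrow\mathcal{A})}\circ\overline{E}$.

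Finally, uniqueness of $\overline{E}$ follows because the inclusion is an embedding, hence injective on objects and on morphisms: any two factorizations of $E^{*}$ through $I_{(\mathfrak{E}\hookrightarrow\mathcal{A})}$ must coincide after composing with the inclusion, forcing them to be equal. I do not expect a genuine obstacle here, since the content is entirely a bookkeeping generalization of Theorem~\ref{existequaeme}; the only point requiring a moment's care is confirming that the pairwise conditions $F_i\circ E^{*}=F_j\circ E^{*}$ \emph{for all} $i,j$ are exactly what is needed to land the image of $E^{*}$ inside the simultaneous-agreement construct $\mathfrak{E}$, rather than merely inside the agreement locus of a single pair.
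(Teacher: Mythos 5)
Your proposal is correct and is exactly the generalization the paper intends: the paper's proof of this theorem simply says it is ``similar to that of Theorem~\ref{existequaeme}'', and your construction of $\mathfrak{E}$ as the simultaneous agreement locus of all the $F_i$, together with the inclusion functor and the factorization $\overline{E}=E^{*}$, reproduces that argument faithfully. No gaps to report.
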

\begin{proof}
Similar to that of Theorem~\ref{existequaeme}.
\end{proof}

\begin{proposition}\label{equaemerg1n}
Let $ {\mathcal{E}}_{\mathcal{A}}= ( \mathcal{A}, e_{\mathcal{A}},
U_{\mathcal{A}} )$, $ {\mathcal{E}}_{\mathcal{B}}= ( \mathcal{B},
e_{\mathcal{B}}, U_{\mathcal{B}} )$ be emergences and $F_1 , F_{2}, \ldots ,
F_{n}:\mathcal{A}\longrightarrow \mathcal{B}$ be
functors. Assume that $E:\mathfrak{E}\longrightarrow \mathcal{A}$ is an
$n$-equalizer emergence of $F_1 , F_{2}, \ldots , F_{n}$.
Then the following conditions are equivalent:
\begin{itemize}
\item [ $\operatorname{(1)}$] $F_{i}=F_{j}$ for all or all $i, j \in \{ = 1,
2, \ldots , n \}$;

\item [ $\operatorname{(2)}$] $E$ is epimorphism of constructs;

\item [ $\operatorname{(3)}$] $E$ is an isomorphism of constructs;

\item [ $\operatorname{(4)}$] $Id_{\mathcal{A}}$ is an $n$-equalizer emergence
of $F_{i}$, for all $i = 1, 2, \ldots , n$.
\end{itemize}
\end{proposition}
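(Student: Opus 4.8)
The plan is to establish the cycle of implications $(3)\Rightarrow(2)\Rightarrow(1)\Rightarrow(4)\Rightarrow(3)$, mirroring the argument of Proposition~\ref{equaemerg1} but working with the \emph{simultaneous} equalizing condition of Definition~\ref{defemeequamulti}. The three implications $(3)\Rightarrow(2)$, $(2)\Rightarrow(1)$ and $(1)\Rightarrow(4)$ are the routine ones, and I would dispatch them first. For $(3)\Rightarrow(2)$, every isomorphism of constructs is in particular an epimorphism. For $(2)\Rightarrow(1)$, I would invoke condition $(1)$ of Definition~\ref{defemeequamulti}, namely $F_i\circ E=F_j\circ E$ for all $i,j$; since $E$ is assumed to be an epimorphism I can cancel it on the right to obtain $F_i=F_j$ for all $i,j\in\{1,\ldots,n\}$. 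For $(1)\Rightarrow(4)$, if all the $F_i$ coincide, then $Id_{\mathcal{A}}$ trivially satisfies $F_i\circ Id_{\mathcal{A}}=F_j\circ Id_{\mathcal{A}}$, and for any test functor $E^{*}:\mathfrak{E}^{*}\to\mathcal{A}$ the required factorization is forced to be $\overline{E}=E^{*}$, which is manifestly unique; hence $Id_{\mathcal{A}}$ is an $n$-equalizer emergence.

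The crux is $(4)\Rightarrow(3)$, and here I would proceed in two moves. First, from $(4)$ the equality $F_i\circ Id_{\mathcal{A}}=F_j\circ Id_{\mathcal{A}}$ gives $F_i=F_j$ for all $i,j$; consequently $Id_{\mathcal{A}}:\mathcal{A}\to\mathcal{A}$ satisfies the equalizing hypothesis of condition $(2)$ of Definition~\ref{defemeequamulti} relative to $E$, so the universal property of the $n$-equalizer emergence $E$ yields a unique functor $T_{1}:\mathcal{A}\to\mathfrak{E}$ with $E\circ T_{1}=Id_{\mathcal{A}}$. Thus $E$ is a retraction (it admits a right inverse). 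Second, I would show that $E$ is a monomorphism. This is the $n$-ary analogue of the implication $(1)\Rightarrow(2)$ of Proposition~\ref{regmonomono}, and the same argument transfers: given functors $H,K$ with $E\circ H=E\circ K$, set $E^{*}=E\circ H$; the relations $F_i\circ E=F_j\circ E$ force $F_i\circ E^{*}=F_j\circ E^{*}$ for all $i,j$, so the uniqueness clause of the universal property of $E$ identifies both $H$ and $K$ with the single factorization $\overline{E}$ of $E^{*}$, whence $H=K$.

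Finally I would combine the two facts. Since $E\circ(T_{1}\circ E)=(E\circ T_{1})\circ E=Id_{\mathcal{A}}\circ E=E=E\circ Id_{\mathfrak{E}}$ and $E$ is a monomorphism, cancelling $E$ on the left gives $T_{1}\circ E=Id_{\mathfrak{E}}$. Together with $E\circ T_{1}=Id_{\mathcal{A}}$ this exhibits $T_{1}$ as a two-sided inverse of $E$, so $E$ is an isomorphism of constructs, establishing $(3)$. The main obstacle I anticipate is purely bookkeeping rather than conceptual: Proposition~\ref{regmonomono} is stated only for the two-functor case, so I must either invoke its $n$-functor generalization or, as above, re-run its monomorphism argument directly for the simultaneous equalizing condition; once $E$ is known to be a monomorphic retraction, the passage to an isomorphism is immediate.
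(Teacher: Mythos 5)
Your proposal is correct and follows essentially the same route as the paper, which simply adapts the proof of Proposition~\ref{equaemerg1}: the easy cycle $(3)\Rightarrow(2)\Rightarrow(1)\Rightarrow(4)$, then for $(4)\Rightarrow(3)$ the universal property yields a right inverse $T_1$ of $E$, and $E$ being a (regular) monomorphism makes it an isomorphism. Your extra care in re-running the monomorphism argument of Proposition~\ref{regmonomono} for the $n$-ary case, rather than citing the binary statement, is a reasonable bookkeeping refinement but not a different approach.
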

\begin{proof}
Similar to that of Proposition~\ref{equaemerg1}.
\end{proof}

We next define the concept of $n$-co-equalizer emergence directly,
without considering the particular case of two functors.

\begin{definition}\label{defemeco-equa}
Let $ {\mathcal{E}}_{\mathcal{A}}$, $ {\mathcal{E}}_{\mathcal{B}}$
and ${\mathcal{E}}_{\mathcal{C}}$ be emergences. Assume that
$F_1 , F_{2}, \ldots , F_{n}: \mathcal{A}\longrightarrow \mathcal{B}$ are
functors. The functor $C:\mathcal{B} \longrightarrow
\mathcal{C}$ is said to be an $n$-co-equalizer
emergence of  $F_1 , F_{2}, \ldots , F_{n}$ if:
\begin{itemize}
\item [ $\operatorname{(1)}$] $C \circ F_{i} = C \circ F_{j}$ for all $i,
j \in \{1, 2, \ldots , n\}$;

\item [ $\operatorname{(2)}$] for each emergence ${\mathcal{E}}_{{\mathcal{C}}^{*}}$
and for each functor $C^{*}:\mathcal{B}\longrightarrow {\mathcal{C}}^{*}$ such
that $ C^{*}\circ F_{i} = C^{*}\circ F_{j}$ for all $i, j \in \{1, 2, \ldots , n\}$,
there exists a unique functor $\overline{C}:
\mathcal{C} \longrightarrow {\mathcal{C}}^{*}$ such that $C^{*}= \overline{C}
\circ C$ such that the following diagram
\end{itemize}
\begin{eqnarray*}
\begin{tikzpicture}
  \matrix (m) [matrix of math nodes,row sep=3em,column sep=3em,minimum width=2em]
  {
      \mathcal{A} & \mathcal{B} & \mathcal{C} \\
      &  & {\mathcal{C}}^{*}\\};
  \path[-stealth]
    (m-1-1) edge node [above] {$F_{i}, F_{j}$} (m-1-2)
       (m-1-2) edge node [above] {$C$}(m-1-3)
    (m-1-2) edge node [below] {$C^{*}$}(m-2-3)
    (m-1-3) edge node [right] {$\overline{C}$}(m-2-3);
\end{tikzpicture}
\end{eqnarray*} commutes.
\end{definition}

\begin{remark}
We do not present results of co-equalizers since it is dual to
that of equalizers. In fact, from the Duality Principle we have:\\
$\operatorname{(1)}$ Co-equalizer emergences are essentially unique.\\
$\operatorname{(2)}$ For each result which is valid to equalizers
there exists an analogous for co-equalizers via dualiy.
\end{remark}

\subsection{Product and co-product of emergence}\label{subsourceeme}

In this subsection, we introduce the concept of \emph{source emergence}. Such definition
is similar to that of source in Category Theory.

\begin{definition}\label{emesource}
Let $ {\mathcal{E}}_{\mathcal{A}}= ( \mathcal{A}, e_{\mathcal{A}}, U_{\mathcal{A}} )$
be an emergence and assume that, for each
$i \in I$ where $I$ is a class, $ {\mathcal{E}}_{{\mathcal{A}}_{i}}= ( {\mathcal{A}}_{i},
e_{{\mathcal{A}}_{i}}, U_{{\mathcal{A}}_{i}} )$ are emergences. A source emergence
is a pair $({\mathcal{E}}_{\mathcal{A}}, {(F_{i})}_{i \in I})$, where
$F_{i}: \mathcal{A} \longrightarrow {\mathcal{A}}_{i}$ is a family of
functors with domain $\mathcal{A}$, indexed by $I$.
\end{definition}

The construct $\mathcal{A}$ is said to be the domain of the
source and the family of constructs ${({\mathcal{A}}_{i})}_{i \in I}$
is called the codomain of the source. We will denote a source emergence by
${({\mathcal{E}}_{\mathcal{A}}, F_{i})}_{I}$ or by
$ {({\mathcal{E}}_{\mathcal{A}} \xrightarrow
{F_{i}}{\mathcal{E}}_{{\mathcal{A}}_{i}})}_{I}$.

\begin{remark}
Although $F_{i}: \mathcal{A} \longrightarrow {\mathcal{A}}_{i}$ is a
family of functors from $\mathcal{A}$ to ${\mathcal{A}}_{i}$, by abuse
of notation, we denote a source emergence by $ {({\mathcal{E}}_{\mathcal{A}}
\xrightarrow {F_{i}}{\mathcal{E}}_{{\mathcal{A}}_{i}})}_{I}$.
\end{remark}

\begin{definition}\label{compsource}
If $ \mathbb{A}={({\mathcal{E}}_{\mathcal{A}} \xrightarrow
{F_{i}}{\mathcal{E}}_{{\mathcal{A}}_{i}})}_{I}$
is a source emergence and if, for each $i \in I$, $ {\mathbb{A}}_{i}=
{({\mathcal{E}}_{{\mathcal{A}}_{i}} \xrightarrow
{G_{ij}}{\mathcal{E}}_{{\mathcal{A}}_{ij}})}_{J_i}$
is a source emergence, then
$$ ({\mathbb{A}}_{i})\circ \mathbb{A}=
{({\mathcal{E}}_{\mathcal{A}} \xrightarrow
{G_{ij}\circ F_{i}}{\mathcal{E}}_{{\mathcal{A}}_{ij}})}_{i
\in I, j \in J_{i}}$$ is a
source emergence called the composite of $\mathbb{A}$ and of the
family ${({\mathbb{A}}_{i})}_{I}$.
\end{definition}

In this paper, given a source emergence $ \mathbb{A}={({\mathcal{E}}_{\mathcal{A}}
\xrightarrow {F_{i}}{\mathcal{E}}_{{\mathcal{A}}_{i}})}_{I}$ and a functor
$F: \mathcal{B}\longrightarrow \mathcal{A}$, we utilize the notation $ \mathbb{A} \circ F$
to denote $ \mathbb{A} \circ F={({\mathcal{E}}_{\mathcal{B}} \xrightarrow
{F_{i}\circ F}{\mathcal{E}}_{{\mathcal{A}}_{i}})}_{I}$.

\begin{definition}\label{monosource}
Let $\mathcal{B}$ be a category. A source emergence
$\mathbb{F}= {({\mathcal{E}}_{\mathcal{A}}, F_{i})}_{I}$ is said to be a
mono-source emergence if for any pair of functors $\mathcal{B}\xrightarrow{R}
\mathcal{A}$ and $\mathcal{B}\xrightarrow{S} \mathcal{A}$,
the equation $\mathbb{F} \circ R = \mathbb{F} \circ S$ (that is,
$ F_{i}\circ R= F_{i}\circ S$ for each $i \in I$), implies that $R=S$.
\end{definition}

\begin{definition}\label{prodemer}
A source emergence $\mathbb{P}= {({\mathcal{E}}_{\mathcal{P}} \xrightarrow
{P_{i}}{\mathcal{E}}_{{\mathcal{A}}_{i}})}_{I}$ is called a product
emergence if for every source emergence $\mathbb{A}={({\mathcal{E}}_{\mathcal{A}}
\xrightarrow {S_{i}}{\mathcal{E}}_{{\mathcal{A}}_{i}})}_{I}$
with the same codomain as $\mathcal{P}$, there exists a unique functor
(universal property) $F:\mathcal{A} \longrightarrow\mathcal{P}$ such that
$\mathbb{A}=\mathbb{P}\circ F$. In other words, the following diagram
\begin{eqnarray*}
\begin{tikzpicture}
  \matrix (m) [matrix of math nodes,row sep=3em,column sep=3em,minimum width=2em]
  {
     \mathcal{P} &  {\mathcal{A}}_{i}\\
     \mathcal{A} &  \\};
  \path[-stealth]
    (m-2-1) edge node [left] {$F$} (m-1-1)
    (m-2-1) edge node [below] {$S_{i}$} (m-1-2)
    (m-1-1) edge node [above] {$P_{i}$}(m-1-2);
\end{tikzpicture}
\end{eqnarray*}
commutes for each $i \in I$. A product emergence with codomain
${({\mathcal{A}}_{i})}_{I}$ is called a product of
${({\mathcal{A}}_{i})}_{I}$.
\end{definition}

The following result is a direct consequence of Definition~\ref{prodemer}.

\begin{proposition}\label{monoprodeme}
Every product emergence is a mono-source emergence.
\end{proposition}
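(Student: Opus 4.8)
The plan is to exploit the \emph{uniqueness} clause of the universal property defining a product emergence; the existence clause plays no role. First I would unwind the definition of mono-source emergence (Definition~\ref{monosource}): I must take an arbitrary construct $\mathcal{B}$ together with two functors $R, S: \mathcal{B} \longrightarrow \mathcal{P}$ satisfying $P_i \circ R = P_i \circ S$ for every $i \in I$ (this is the hypothesis $\mathbb{P} \circ R = \mathbb{P} \circ S$), and from this deduce $R = S$.

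The key construction is to package the common composites into a single source emergence having the same codomain as $\mathbb{P}$. Concretely, I would set $\mathbb{A} = {({\mathcal{E}}_{\mathcal{B}} \xrightarrow{P_i \circ R} {\mathcal{E}}_{{\mathcal{A}}_i})}_{I}$, which is a legitimate source emergence since each $P_i \circ R: \mathcal{B} \longrightarrow {\mathcal{A}}_i$ is a functor and its codomain family is exactly ${({\mathcal{A}}_i)}_{I}$, matching the codomain of $\mathbb{P}$. Because $\mathbb{P}$ is a product emergence, the universal property of Definition~\ref{prodemer} applies to $\mathbb{A}$: there is a \emph{unique} functor $F: \mathcal{B} \longrightarrow \mathcal{P}$ with $\mathbb{A} = \mathbb{P} \circ F$, that is, $P_i \circ F = P_i \circ R$ for all $i \in I$.

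Next I would observe that both candidate functors satisfy this defining equation. Indeed, $R$ itself gives $P_i \circ R = P_i \circ R$ trivially, while $S$ gives $P_i \circ S = P_i \circ R$ by the standing hypothesis $P_i \circ R = P_i \circ S$. Hence $R$ and $S$ are both solutions of the equation that characterizes $F$ uniquely, so by the uniqueness assertion of the universal property one concludes $R = F = S$. This yields $R = S$, which is precisely the mono-source condition, completing the argument.

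I do not anticipate a genuine obstacle: the argument is the standard ``products form mono-sources'' proof transported to emergences. The only point requiring care is to invoke the \emph{uniqueness} part rather than the existence part of the product property, and to verify that the auxiliary source $\mathbb{A}$ really has domain $\mathcal{B}$ and the prescribed codomain ${({\mathcal{A}}_i)}_{I}$, so that Definition~\ref{prodemer} is directly applicable with $\mathcal{A}$ there replaced by $\mathcal{B}$ here.
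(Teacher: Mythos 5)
Your argument is correct and is exactly the standard proof the paper defers to (Proposition 10.21 of \cite{strecker:1990}): package the common composites $P_i \circ R = P_i \circ S$ into a source $\mathbb{A}$ with the same codomain as $\mathbb{P}$ and invoke the uniqueness clause of the universal property to force $R = S$. No gaps; this matches the intended proof.
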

\begin{proof}
The proof is similar to that of Proposition 10.21 in \cite{strecker:1990}.
\end{proof}

Product emergences are essentially unique, as states the following result.

\begin{proposition}\label{uniquepro}
For every family ${({\mathcal{A}}_{i})}_{i \in I}$ of constructs, product
emergences of ${({\mathcal{A}}_{i})}_{I}$ are essentially unique. More
precisely, if $\mathbb{P}= {({\mathcal{E}}_{\mathcal{P}} \xrightarrow {P_{i}}
{\mathcal{E}}_{{\mathcal{A}}_{i}})}_{I}$
is a product emergence of ${({\mathcal{A}}_{i})}_{I}$, then one has:
\begin{itemize}
\item [ $\operatorname{(1)}$] for each product emergence
$\mathbb{Q}= {({\mathcal{E}}_{\mathcal{Q}} \xrightarrow {Q_{i}}
{\mathcal{E}}_{{\mathcal{A}}_{i}})}_{I}$,
there exists an isomorphism $\mathcal{Q} \xrightarrow {H} \mathcal{P}$
with $\mathbb{Q}=\mathbb{P}\circ H$;

\item [ $\operatorname{(2)}$] for each isomorphism $\mathcal{A}
\xrightarrow{G} \mathcal{P}$
the source $\mathbb{P}\circ G$ is a product of ${({\mathcal{A}}_{i})}_{I}$.
\end{itemize}
\end{proposition}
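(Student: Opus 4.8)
The plan is to run the standard uniqueness-up-to-isomorphism argument for universal constructions, using the uniqueness clause of Definition~\ref{prodemer} as the decisive tool. For Item~$\operatorname{(1)}$, since $\mathbb{P}$ is a product emergence and $\mathbb{Q}= {({\mathcal{E}}_{\mathcal{Q}} \xrightarrow{Q_{i}}{\mathcal{E}}_{{\mathcal{A}}_{i}})}_{I}$ is a source emergence with the same codomain, the universal property of $\mathbb{P}$ yields a unique functor $H:\mathcal{Q}\longrightarrow \mathcal{P}$ with $\mathbb{Q}=\mathbb{P}\circ H$, that is, $Q_{i}=P_{i}\circ H$ for every $i\in I$. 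Interchanging the roles of $\mathbb{P}$ and $\mathbb{Q}$ (legitimate because $\mathbb{Q}$ is also a product emergence of the same family), the universal property of $\mathbb{Q}$ produces a unique functor $K:\mathcal{P}\longrightarrow \mathcal{Q}$ with $\mathbb{P}=\mathbb{Q}\circ K$, i.e., $P_{i}=Q_{i}\circ K$ for all $i$. The remaining task is to show that $H$ and $K$ are mutually inverse.

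Substituting one factorization into the other gives, for each $i$, $P_{i}=Q_{i}\circ K=(P_{i}\circ H)\circ K=P_{i}\circ(H\circ K)$, so the source $\mathbb{P}$ factors through itself both via $H\circ K$ and via ${Id}_{\mathcal{P}}$. Applying the uniqueness half of the universal property of $\mathbb{P}$ to the source $\mathbb{P}$ itself forces $H\circ K={Id}_{\mathcal{P}}$. Symmetrically, $Q_{i}=Q_{i}\circ(K\circ H)$ for all $i$, and the uniqueness clause for $\mathbb{Q}$ gives $K\circ H={Id}_{\mathcal{Q}}$. Hence $H$ is an isomorphism of constructs with inverse $K$, and $\mathbb{Q}=\mathbb{P}\circ H$ holds by construction, which proves Item~$\operatorname{(1)}$.

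For Item~$\operatorname{(2)}$, given an isomorphism $G:\mathcal{A}\longrightarrow \mathcal{P}$, I would verify that $\mathbb{P}\circ G={({\mathcal{E}}_{\mathcal{A}} \xrightarrow{P_{i}\circ G}{\mathcal{E}}_{{\mathcal{A}}_{i}})}_{I}$ satisfies the defining property in Definition~\ref{prodemer}. For an arbitrary source emergence ${({\mathcal{E}}_{\mathcal{B}} \xrightarrow{S_{i}}{\mathcal{E}}_{{\mathcal{A}}_{i}})}_{I}$ with the same codomain, the universal property of $\mathbb{P}$ supplies a unique functor $F:\mathcal{B}\longrightarrow \mathcal{P}$ with $S_{i}=P_{i}\circ F$. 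Setting $F'=G^{-1}\circ F$ gives $(P_{i}\circ G)\circ F'=P_{i}\circ F=S_{i}$, so the required mediating functor exists; uniqueness follows by cancellation, since any $F''$ with $(P_{i}\circ G)\circ F''=S_{i}$ satisfies $P_{i}\circ(G\circ F'')=S_{i}$, whence $G\circ F''=F$ by uniqueness for $\mathbb{P}$, and therefore $F''=G^{-1}\circ F=F'$.

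The argument is entirely formal, so I do not expect a genuine obstacle; the only delicate point is the correct invocation of the uniqueness half of the universal property in Item~$\operatorname{(1)}$, where it must be applied to the source $\mathbb{P}$ (respectively $\mathbb{Q}$) regarded as factoring through itself, rather than to some extraneous source. Once this is arranged, everything reduces to composing functors and cancelling the isomorphism $G$.
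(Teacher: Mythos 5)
Your proof is correct and is exactly the argument the paper intends: the paper's proof merely says ``adapt Proposition 10.22 of \cite{strecker:1990}'', and what you have written out is that standard adaptation (universal property in both directions, then the uniqueness clause applied to $\mathbb{P}$ and $\mathbb{Q}$ factoring through themselves to force $H\circ K$ and $K\circ H$ to be identities, plus the routine cancellation argument for Item~(2)). No gaps.
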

\begin{proof}
It suffices to adapt the proof of Proposition 10.22 in \cite{strecker:1990}.
\end{proof}

Given a family ${({\mathcal{A}}_{i})}_{I}$ of constructs,
the unique product emergence of this family is
denoted by ${({\prod}{\mathcal{A}}_{i}, {\pi}_{j})}_{j \in I}$.
Since we are interested in investigating biological systems,
it is only necessary to guarantee the existence of products
of a finite number of emergences, as states the following
result.

\begin{theorem}\label{existprodemer}
Given any finite family ${\mathcal{A}}_{1}, {\mathcal{A}}_{2}, \ldots,
{\mathcal{A}}_{n}$ of constructs, there exists a product
emergence of ${({\mathcal{A}}_{i})}_{i=1}^{n}$. The product emergence is the
family of projections ${\pi}_{j}:{\prod}_{i=1}^{n}{\mathcal{A}}_{i}
\longrightarrow {\mathcal{A}}_{j}$, where
${\prod}_{i=1}^{n}{\mathcal{A}}_{i}$ is the cartesian product of
${({\mathcal{A}}_{i})}_{i=1}^{n}$.
\end{theorem}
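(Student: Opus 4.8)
The plan is to exhibit the cartesian product $\mathcal{P}= {\prod}_{i=1}^{n}{\mathcal{A}}_{i}$ together with the projection functors ${\pi}_{j}$ as a product emergence in the sense of Definition~\ref{prodemer}, and to verify the required universal property directly. First I would invoke Proposition~\ref{newprop22} to guarantee that ${\mathcal{E}}_{\mathcal{P}}$ is genuinely an emergence, so that $\mathbb{P}= {({\mathcal{E}}_{\mathcal{P}} \xrightarrow{{\pi}_{i}} {\mathcal{E}}_{{\mathcal{A}}_{i}})}_{i=1}^{n}$ is a legitimate source emergence; each ${\pi}_{j}$ is a functor by the componentwise description of composition and identities furnished by Definition~\ref{cartesian}.

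The heart of the argument is the construction of the mediating functor. Given an arbitrary source emergence $\mathbb{A}= {({\mathcal{E}}_{\mathcal{A}} \xrightarrow{S_{i}} {\mathcal{E}}_{{\mathcal{A}}_{i}})}_{i=1}^{n}$ with the same codomain, I would define $F:\mathcal{A}\longrightarrow \mathcal{P}$ by tupling the given functors: on objects $F(A)=(S_{1}(A), \ldots, S_{n}(A))$, and on a morphism $A\xrightarrow{f} A^{*}$, $F(f)=(S_{1}(f), \ldots, S_{n}(f))$. Using the componentwise composition law of Definition~\ref{cartesian} and the functoriality of each $S_{i}$, one checks
\begin{eqnarray*}
F(g\circ f) &=& (S_{i}(g\circ f))_{i} = (S_{i}(g)\circ S_{i}(f))_{i}\\
&=& (S_{i}(g))_{i}\circ (S_{i}(f))_{i} = F(g)\circ F(f),
\end{eqnarray*}
and similarly $F({id}_{A})=({id}_{S_{i}(A)})_{i}={id}_{F(A)}$, so $F$ is a functor. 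The commutativity ${\pi}_{j}\circ F = S_{j}$ for every $j$ is then immediate from ${\pi}_{j}((S_{i}(A))_{i})=S_{j}(A)$ on objects and the analogous identity on morphisms, which is exactly the assertion $\mathbb{A}=\mathbb{P}\circ F$.

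Finally I would establish uniqueness. Suppose $G:\mathcal{A}\longrightarrow \mathcal{P}$ is any functor satisfying ${\pi}_{j}\circ G = S_{j}$ for all $j$. For each $\mathcal{A}$-object $A$ the image $G(A)$ is an element of ${\prod}_{i=1}^{n}{\mathcal{A}}_{i}$ whose $j$-th coordinate is ${\pi}_{j}(G(A))=S_{j}(A)={\pi}_{j}(F(A))$; since an object of the cartesian product is completely determined by its coordinates, $G(A)=F(A)$, and the same coordinatewise comparison on morphisms forces $G(f)=F(f)$, whence $G=F$. This is a standard universal-property verification, so I do not anticipate a serious obstacle. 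The only point worth flagging is that Definition~\ref{prodemer} requires merely a mediating functor between the underlying constructs, not a homomorphism compatible with the GU functors; hence no condition involving $U_{\mathcal{P}}$ needs to be imposed on $F$, and the conclusion follows exactly as in the classical product of categories.
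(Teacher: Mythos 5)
Your proposal is correct and follows essentially the same route as the paper's own proof: the mediating functor is defined by tupling the given functors componentwise, the identities ${\pi}_{j}\circ F = S_{j}$ are read off from the definition, and uniqueness is obtained by comparing coordinates of any competing functor. The only difference is that you spell out the functoriality check for $F$ and the appeal to Proposition~\ref{newprop22}, which the paper leaves implicit.
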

\begin{proof}
Let $({\mathcal{E}}_{\mathcal{A}}, {(F_{i})}_{i=1}^{n})$ be a source
emergence, with functors $F_{i}: \mathcal{A} \longrightarrow
{\mathcal{A}}_{i}$. For each $\mathcal{A}$-objects $A, A^{*}$, and for each
$\mathcal{A}$-morphism
$A\xrightarrow{h} A^{*}$, define the functor $F: \mathcal{A}\longrightarrow
{\prod}_{i=1}^{n}{\mathcal{A}}_{i}$ such that $F(A)=(F_{1}(A), F_{2}(A),
\ldots , F_{n}(A))$ and
$F(h)= (F_{1}(h), F_{2}(h), \ldots , F_{n}(h))$, where for
each $i=1, 2,\ldots, n$, $F_{i}(h)$ is an ${\mathcal{A}}_{i}$-morphism
$F_{i}(A)\xrightarrow{F_{i}(h)} F_{i}(A^{*})$. From definition, it is
clear that ${\pi}_{i} \circ F = F_{i}$, for
each $i=1, 2,\ldots, n$, \emph{i.e.}, the diagram
\begin{eqnarray*}
\begin{tikzpicture}
  \matrix (m) [matrix of math nodes,row sep=3em,column sep=3em,minimum width=2em]
  {
     {\prod}_{i=1}^{n}{\mathcal{A}}_{i} & {\mathcal{A}}_{i} \\
     \mathcal{A} &  \\};
  \path[-stealth]
    (m-1-1) edge node [above] {${\pi}_{i}$} (m-1-2)
    (m-2-1) edge node [left] {$F$} (m-1-1)
    (m-2-1) edge node [below] {$F_{i}$}(m-1-2);
\end{tikzpicture}
\end{eqnarray*}
commutes for each $i$.

We will now show that $F$ is unique. To do this, assume that
$G: \mathcal{A}\longrightarrow {\prod}_{i=1}^{n}
{\mathcal{A}}_{i}$ is a functor  such that ${\pi}_{i} \circ G = F_{i}$, for
each $i=1, 2,\ldots, n$. For each $\mathcal{A}$-object $A$,
it follows from definition of $G$ that
$G(A)=(A_{1}, A_{2}, \ldots , A_{n})$ for some
${\mathcal{A}}_{i}$-objects $A_{i}$, $i=1, 2,\ldots, n$.
Thus $[{\pi}_{i} \circ G](A) = F_{i}(A)$ implies that $F_{i}(A)=A_{i}$
for each $i=1, 2,\ldots, n$; so $F(A)=G(A)$ for all
$\mathcal{A}$-object $A$. Hence $F=G$ on objects. Analogously,
for each $\mathcal{A}$-morphism $h$, $A\xrightarrow{h} A^{*}$, one has
$G(h)=(h_{1}, h_{2}, \ldots , h_{n})$ for some
${\mathcal{A}}_{i}$-morphisms $h_{i}$, $i=1, 2,\ldots, n$. Since
$[{\pi}_{i} \circ G](h) = F_{i}(h)$, it follows that
$F_{i}(h)= h_{i}$ for each $i=1, 2,\ldots, n$; hence $F(h)=G(h)$.
Therefore, $F=G$, and the proof is complete.
\end{proof}

We next introduce the concept of \emph{sink emergence}.

\begin{definition}\label{emesink}
Let $ {\mathcal{E}}_{\mathcal{A}}= ( \mathcal{A}, e_{\mathcal{A}},
U_{\mathcal{A}} )$ be an emergence and assume that, for each
$i \in I$ where $I$ is a class, $ {\mathcal{E}}_{{\mathcal{A}}_{i}}= ( {\mathcal{A}}_{i},
e_{{\mathcal{A}}_{i}}, U_{{\mathcal{A}}_{i}} )$ are emergences. A sink emergence
is a pair $({(F_{i})}_{i \in I}, {\mathcal{E}}_{\mathcal{A}})$, where
$F_{i}: {\mathcal{A}}_{i} \longrightarrow \mathcal{A}$ is a family of
functors with codomain $\mathcal{A}$, indexed by $I$.
\end{definition}

The family of constructs ${({\mathcal{A}}_{i})}_{i \in I}$ is called the
domain of the sink. We write
$ {({\mathcal{E}}_{\mathcal{A}}, F_{i})}_{I}$ or $ {({\mathcal{E}}_{{\mathcal{A}}_{i}}
\xrightarrow {F_{i}} {\mathcal{E}}_{\mathcal{A}})}_{I} $ to denote
a sink emergence.

\begin{definition}\label{coprodemer}
A sink emergence $\mathfrak{C} = {({\mathcal{E}}_{{\mathcal{A}}_{i}}
\xrightarrow {{\lambda}_{i}} {\mathcal{E}}_{\mathcal{C}})}_{I} $
is called a co-product emergence if for every sink emergence $\mathbb{A}=
{({\mathcal{E}}_{{\mathcal{A}}_{i}}
\xrightarrow {F_{i}} {\mathcal{E}}_{\mathcal{A}})}_{I}$, there exists
a unique functor $H:\mathcal{C} \longrightarrow\mathcal{A}$ such that
$\mathbb{A}=H \circ \mathfrak{C}$, that is, the following diagram
\begin{eqnarray*}
\begin{tikzpicture}
  \matrix (m) [matrix of math nodes,row sep=3em,column sep=3em,minimum width=2em]
  {
      \mathcal{C} & {\mathcal{A}}_{i} \\
      \mathcal{A} &  \\};
  \path[-stealth]
    (m-1-1) edge node [left] {$H$} (m-2-1)
    (m-1-2) edge node [below] {$F_{i}$} (m-2-1)
    (m-1-2) edge node [above] {${\lambda}_{i}$}(m-1-1);
\end{tikzpicture}
\end{eqnarray*}
commutes for each $i \in I$. A co-product emergence with domain
${({\mathcal{A}}_{i})}_{I}$ is called a co-product of
${({\mathcal{A}}_{i})}_{I}$,
\end{definition}

\begin{remark}
Analogously to product emergences, co-product emergences are also unique
up to isomorphisms. Since the proof of this result is similar to that of
Proposition~\ref{uniquepro}, we do not present it here.
For a family ${({\mathcal{A}}_{i})}_{I}$ of constructs, the unique
co-product emergence of this family is denoted by
${({\Upsilon}_{j}, {\coprod}{\mathcal{A}}_{i})}_{j \in I}$.
\end{remark}

The following result states that co-products of finite
families of constructs exist.

\begin{theorem}\label{existcopemer}
Let $ {\mathcal{A}}_{1}, \ldots , {\mathcal{A}}_{n}$ be a
finite family of constructs. Then there exists a co-product
emergence obtained from them.
\end{theorem}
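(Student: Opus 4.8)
The plan is to mirror the construction carried out in Theorem~\ref{existprodemer}, replacing the cartesian product by the \emph{disjoint union} of constructs, which realizes the categorical co-product. First I would define the underlying construct $\mathcal{C}=\coprod_{i=1}^{n}{\mathcal{A}}_{i}$ as follows: its objects are the pairs $(i, A)$ with $A \in \operatorname{Ob}({\mathcal{A}}_{i})$; for two objects $(i, A)$ and $(j, B)$ one sets $\operatorname{hom}((i, A), (j, B)) = {\operatorname{hom}}_{{\mathcal{A}}_{i}}(A, B)$ when $i = j$ and $\operatorname{hom}((i, A), (j, B)) = \emptyset$ when $i \neq j$; identities and composition are inherited block-wise from each ${\mathcal{A}}_{i}$. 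As in the product case (Proposition~\ref{newprop22}), this is a construct in the sense of Definition~\ref{defnew8}: every object $(i, A)$ retains the operations $e_{A}$ it carried in ${\mathcal{A}}_{i}$, so the structure of $\mathcal{C}$ is the operations of each block acting on the objects of that block.

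Next I would introduce the co-product injections ${\Upsilon}_{i}:{\mathcal{A}}_{i}\longrightarrow \mathcal{C}$ given on objects by ${\Upsilon}_{i}(A) = (i, A)$ and on morphisms by ${\Upsilon}_{i}(f) = f$ (placed in the $i$-th block). These are functors by inspection, and they are compatible with the corresponding generalized underlying functors, so each ${\Upsilon}_{i}$ is a homomorphism of emergences. I then claim that the sink $\mathfrak{C}=({\mathcal{E}}_{{\mathcal{A}}_{i}}\xrightarrow{{\Upsilon}_{i}}{\mathcal{E}}_{\mathcal{C}})_{i=1}^{n}$ is a co-product emergence in the sense of Definition~\ref{coprodemer}.

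To verify the universal property, given an arbitrary sink emergence $\mathbb{A}=({\mathcal{E}}_{{\mathcal{A}}_{i}}\xrightarrow{F_{i}}{\mathcal{E}}_{\mathcal{A}})_{i=1}^{n}$ I would define $H:\mathcal{C}\longrightarrow \mathcal{A}$ by $H(i, A) = F_{i}(A)$ on objects and $H(f) = F_{i}(f)$ on a morphism $f$ belonging to the $i$-th block. This $H$ is well defined precisely \emph{because} $\mathcal{C}$ carries no morphisms between distinct blocks, so every morphism is assigned to exactly one index $i$; functoriality of $H$ then follows block-wise from the functoriality of each $F_{i}$, and $H\circ{\Upsilon}_{i}=F_{i}$ holds by construction, i.e. $\mathbb{A}=H\circ\mathfrak{C}$. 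For uniqueness, suppose $H'$ also satisfies $H'\circ{\Upsilon}_{i}=F_{i}$ for all $i$; since every object and every morphism of $\mathcal{C}$ lies in the image of precisely one ${\Upsilon}_{i}$, one obtains $H'(i, A)=F_{i}(A)=H(i, A)$ on objects and $H'(f)=F_{i}(f)=H(f)$ on morphisms, whence $H'=H$.

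The step I expect to require the most care is checking that the disjoint union genuinely inherits a construct structure when the ${\mathcal{A}}_{i}$ carry possibly different operation sets; as in the product case one equips each block with its own operations, so Definition~\ref{defnew8} is to be read block-wise. Everything categorical is then forced by the absence of cross-block morphisms, which makes both the functoriality and the uniqueness of $H$ immediate. As an alternative route, the statement also follows from Theorem~\ref{existprodemer} through the Duality Principle, since co-products are dual to products and, by Proposition~\ref{newteo16}, ${\mathcal{A}}^{op}$ is a construct whenever $\mathcal{A}$ is.
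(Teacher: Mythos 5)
Your proposal is correct and follows essentially the same route as the paper: both form a disjoint union of the $\mathcal{A}_i$ by tagging (you tag objects with the index $i$, the paper tags with objects $X_i$ of an auxiliary one-object construct $\mathcal{X}_i$, which is only a cosmetic difference), then define the injections blockwise and verify the universal property and uniqueness of the induced functor $H$ exactly as the paper does with its functor $T$. The concluding observation that the coproduct could instead be obtained by duality is an extra remark not present in the paper, but the main argument coincides.
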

\begin{proof}
Note that the constructs are not necessary disjoint; then, we must
generate a disjoint union of them in order to guarantee our
proof. To do this, choose any construct $\mathcal{X}$ having at least $n$ distinct
$\mathcal{X}$-objects. Let $X_1, X_{2}, \ldots , X_{n}$ be $n$
distinct $\mathcal{X}$-objects. We then form $n$ constructs
${\mathcal{X}}_1, {\mathcal{X}}_{2}, \ldots , {\mathcal{X}}_{n}$
whose unique ${\mathcal{X}}_{i}$-object is $X_{i}$ and
the unique ${\mathcal{X}}_{i}$-morphism is the identity
$X_{i}\xrightarrow{{id}_{{X}_{i}}} X_{i}$, for every $i=1, 2,
\ldots , n$. Let us consider $n$ disjoint constructs of the form
${\mathcal{A}}_{i}\times \{{\mathcal{X}}_{i}\}$, $i=1, 2,
\ldots , n$. The disjoint union ${\bigcup}_{i=1}^{n}({\mathcal{A}}_{i}\times
\{{\mathcal{X}}_{i}\})$ is also a construct and it is denoted by
${\biguplus}_{i=1}^{n}{\mathcal{A}}_{i}$. Note that each
${\mathcal{A}}_{i}\times \{{\mathcal{X}}_{i}\}$ is isomorphic
(as categories) to ${\mathcal{A}}_{i}$.

Let us now consider the construct ${\biguplus}_{i=1}^{n}
{\mathcal{A}}_{i}$ together with the $n$
functors ${\mathcal{A}}_{j}\xrightarrow{{\Upsilon}_{j}} {\biguplus}_{i=1}^{n}
{\mathcal{A}}_{i}$, defined by ${\Upsilon}_{j}(A)=(A,
X_{j})$ and ${\Upsilon}_{j} (f)=(f, {id}_{X_{j}})$,
where $A\in \operatorname{Ob}({\mathcal{A}}_{j})$ and $f\in
\operatorname{Mor}({\mathcal{A}}_{j})$ for every
$j=1, 2, \ldots, n$. We will show that the family ${(
{\mathcal{A}}_{j}\xrightarrow{{\Upsilon}_{j}} {\biguplus}_{i=1}^{n}
{\mathcal{A}}_{i})}_{j=1}^{n}$ is a co-product of
${({\mathcal{A}}_{i})}_{i=1}^{n}$.

Let us consider a sink emergence ${({\mathcal{A}}_{j}\xrightarrow{F_{j}}
\mathcal{B})}_{j=1}^{n}$. Define the functor
$T:{\biguplus}_{i=1}^{n}{\mathcal{A}}_{i}\longrightarrow \mathcal{B} $ as
follows: for each $i=1,2 , \ldots , n$, $T(A, X_{i})= F_{i}(A)$
for all ${\mathcal{A}}_{i}$-object $A$ and for all
${\mathcal{X}}_{i}$-object $X_{i}$; $T(f, {id}_{{X}_{i}})=
F_{i}(f)$ for all ${\mathcal{A}}_{i}$-morphism $f$
and for all ${\mathcal{X}}_{i}$-morphism (${X}_{i}$-identities)
${id}_{{X}_{i}}$. It is easy to see that $T$ is, in fact, a functor
and $F_{i}=T \circ {\Upsilon}_{i}$.

To prove the uniqueness, assume that $L:{\biguplus}_{i=1}^{n}
{\mathcal{A}}_{i}\longrightarrow \mathcal{B} $ is
a functor such that $F_{i}=L \circ {\Upsilon}_{i}$. Then, for each
${\mathcal{A}}_{i}$-object
$A$ and for (the unique) ${\mathcal{X}}_{i}$-object $X_{i}$, we have
$[L \circ {\Upsilon}_{i}](A)=
[T \circ {\Upsilon}_{i}](A)$, which implies that
$L(A, X_{i})=T(A, X_{i})$ for all $i=1, 2, \ldots , n$. Hence, $L=T$ on objects.
Analogously, for each $f \in \operatorname{Mor}
({\mathcal{A}}_{i})$ and for (the unique) ${\mathcal{X}}_{i}$-morphism
${id}_{X_{i}}$, one has $L(f, {id}_{X_{i}})=
T(f, {id}_{X_{i}})$ for all $i=1, 2, \ldots , n$, which implies that
$L=T$ on morphisms. Therefore, $L=T$. To finish the proof,
it suffices to take the construct ${\biguplus}_{i=1}^{n}
{\mathcal{A}}_{i}$ with its corresponding underlying functor.
\begin{eqnarray*}
\begin{tikzpicture}
  \matrix (m) [matrix of math nodes,row sep=3em,column sep=3em,minimum width=2em]
  {
      {\biguplus}_{i=1}^{n}{\mathcal{A}}_{i} & \mathcal{B}\\
      {\mathcal{A}}_{j} &  \\};

  \path[-stealth]
    (m-1-1) edge node [above] {$T$} (m-1-2)
    (m-1-1) edge node [below] {$L$} (m-1-2)
    (m-2-1) edge node [left] {${\Upsilon}_{j}$} (m-1-1)
    (m-2-1) edge node [right] {${F}_{j}$}(m-1-2);
\end{tikzpicture}
\end{eqnarray*} The proof is complete.
\end{proof}

\subsection{Limit and co-limit of emergences}

In this section we introduce the concepts of \emph{limits} and \emph{co-limits}
of emergences. In order to do this, we must define \emph{quasi-functor}.

\begin{definition}\label{quasifunctor}
Let $\mathbb{A}$ and $\mathbb{B}$ be two quasi-categories. Then a
quasi-functor $\mathrm{T}:\mathbb{A}\longrightarrow\mathbb{B}$ is a
function that assigns to each $\mathbb{A}$-object $\mathcal{A}$ a $\mathbb{B}$-object
$\mathrm{T}(\mathcal{A})$, and to each $\mathbb{A}$-morphism $\mathcal{A}
\xrightarrow{F}\mathcal{A}^{*}$ a $\mathbb{B}$-morphism
$\mathrm{T}(\mathcal{A})\xrightarrow{\mathrm{T}(F)}
\mathrm{T}({\mathcal{A}}^{*})$ such that $\mathrm{T}$ preserves
composition of morphisms and identities morphisms.
\end{definition}

Based on the previous definition, we can define \emph{diagram} for
emergence. We denote by ${\mathbb{C}}_{\mathbb{O}}$ the quasi-category
of all constructs.

\begin{definition}\label{diagramemer}
A diagram emergence is a quasi-functor
$\mathrm{D}:\mathbb{I} \longrightarrow{\mathbb{C}}_{\mathbb{O}}$
with codomain ${\mathbb{C}}_{\mathbb{O}}$. The domain $\mathbb{I}$ is
said to be the scheme of the diagram.
\end{definition}

\begin{definition}\label{naturemersource}
Let $\mathrm{D}:\mathbb{I}\longrightarrow{\mathbb{C}}_{\mathbb{O}}$
be a diagram emergence. A ${\mathbb{C}}_{\mathbb{O}}$-source
emergence ${(\mathcal{C}\xrightarrow{{F}_{i}}{\mathcal{D}}_{i})}_{i \in
\operatorname{Ob}(\mathbb{I})}$
is said to be natural for $\mathrm{D}$ if for each
$\mathbb{I}$-morphism
$i \xrightarrow{D} j$, the following triangle
\begin{eqnarray*}
\begin{tikzpicture}
  \matrix (m) [matrix of math nodes,row sep=3em,column sep=3em,minimum width=2em]
  {
     \mathcal{C} &  \\
     {\mathcal{D}}_{i} &  {\mathcal{D}}_{j} \\};
  \path[-stealth]
    (m-1-1) edge node [left] {${F}_{i}$} (m-2-1)
    (m-1-1) edge node [above] {${F}_{j}$} (m-2-2)
    (m-2-1) edge node [below] {$\mathrm{D}(D)$} (m-2-2);
\end{tikzpicture}
\end{eqnarray*} commutes.
\end{definition}

\begin{remark}
Note that an $\mathbb{I}$-morphism means a functor $D:i \longrightarrow
j$, where $i, j$ are categories belonging to the quasi-category
$\mathbb{I}$. Similarly, a ${\mathbb{C}}_{\mathbb{O}}$-morphism is a functor
$F:\mathcal{A} \longrightarrow \mathcal{B}$, where $\mathcal{A}$
and $\mathcal{B}$ are categories belonging to ${\mathbb{C}}_{\mathbb{O}}$,
\emph{i.e.}, they are constructs.
\end{remark}

Finally, we are able to define \emph{limit} for emergences.

\begin{definition}\label{deflimitemer}
Let $\mathrm{D}:\mathbb{I}\longrightarrow{\mathbb{C}}_{\mathbb{O}}$
be a diagram emergence. A limit emergence of $\mathrm{D}$ is a natural
source ${(\mathcal{L}\xrightarrow{{L}_{i}}{\mathcal{D}}_{i})}_{i\in\operatorname{Ob}
(\mathbb{I})}$ for $\mathrm{D}$ such that, for each
natural source ${(\mathcal{C}\xrightarrow{{F}_{i}}
{\mathcal{D}}_{i})}_{i \in \operatorname{Ob}(\mathbb{I})}$ for
$\mathrm{D}$, there exists a unique
${\mathbb{C}}_{\mathbb{O}}$-morphism $F:\mathcal{C}
\longrightarrow \mathcal{L}$ such that the following triangle
\begin{eqnarray*}
\begin{tikzpicture}
  \matrix (m) [matrix of math nodes,row sep=3em,column sep=3em,minimum width=2em]
  {
     \mathcal{L} &  \\
     \mathcal{C} &  {\mathcal{D}}_{i} \\};
  \path[-stealth]
    (m-2-1) edge node [left] {$F$} (m-1-1)
    (m-1-1) edge node [above] {${L}_{i}$} (m-2-2)
    (m-2-1) edge node [below] {$F_{i}$} (m-2-2);
\end{tikzpicture}
\end{eqnarray*} commutes for each $i \in \operatorname{Ob}(\mathbb{I})$.
\end{definition}

Limits emergences are unique up to isomorphisms.

\begin{proposition}\label{uniqlimitemer}
Let $\mathrm{D}:\mathbb{I}\longrightarrow{\mathbb{C}}_{\mathbb{O}}$
be a diagram emergence. If ${(\mathcal{L}\xrightarrow{{L}_{i}}
{\mathcal{D}}_{i})}_{i\in\operatorname{Ob} (\mathbb{I})}$ and ${(\mathcal{L^{*}}
\xrightarrow{{L}_{i}^{*}}{\mathcal{D}}_{i})}_{i\in\operatorname{Ob}
(\mathbb{I})}$ are limits emergences of $\mathrm{D}$, then there exists
an isomorphism $\mathcal{L^{*}}\xrightarrow{T} \mathcal{L}$ such that
${L}_{i}^{*}=L_{i}\circ T$ for all $i\in\operatorname{Ob} (\mathbb{I})$.
 Moreover, for each isomorphism
$\mathcal{C} \xrightarrow{K} \mathcal{L}$, it follows that
${(\mathcal{C} \xrightarrow{{L}_{i}\circ K}
{\mathcal{D}}_{i})}_{i\in\operatorname{Ob} (\mathbb{I})}$ is
a limit emergence of $\mathrm{D}$.
\end{proposition}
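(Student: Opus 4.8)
The plan is to exploit the universal property of limit emergences twice, in the standard manner by which universal objects are shown to be unique up to a unique isomorphism. First I would treat $(\mathcal{L^{*}}\xrightarrow{L_{i}^{*}}\mathcal{D}_{i})_{i\in\operatorname{Ob}(\mathbb{I})}$ as an ordinary natural source for $\mathrm{D}$ and feed it into the universal property of the limit $(\mathcal{L}\xrightarrow{L_{i}}\mathcal{D}_{i})_{i\in\operatorname{Ob}(\mathbb{I})}$; this yields a unique ${\mathbb{C}}_{\mathbb{O}}$-morphism $T:\mathcal{L^{*}}\longrightarrow\mathcal{L}$ with $L_{i}^{*}=L_{i}\circ T$ for every $i\in\operatorname{Ob}(\mathbb{I})$. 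Symmetrically, regarding $(\mathcal{L}\xrightarrow{L_{i}}\mathcal{D}_{i})$ as a natural source and applying the universal property of $(\mathcal{L^{*}}\xrightarrow{L_{i}^{*}}\mathcal{D}_{i})$ produces a unique morphism $S:\mathcal{L}\longrightarrow\mathcal{L^{*}}$ with $L_{i}=L_{i}^{*}\circ S$.

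The second step is to show that $T$ and $S$ are mutually inverse. Composing gives $L_{i}\circ(T\circ S)=(L_{i}\circ T)\circ S=L_{i}^{*}\circ S=L_{i}$ for all $i$, so $T\circ S$ is a morphism from the source $(\mathcal{L},L_{i})$ to itself that commutes with every leg $L_{i}$. But $\operatorname{id}_{\mathcal{L}}$ also has this property, and the uniqueness clause in the universal property of $\mathcal{L}$ (applied to the natural source $(\mathcal{L},L_{i})$ itself) forces $T\circ S=\operatorname{id}_{\mathcal{L}}$. The same argument with the roles reversed gives $S\circ T=\operatorname{id}_{\mathcal{L^{*}}}$. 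Hence $T$ is an isomorphism and $L_{i}^{*}=L_{i}\circ T$, as required.

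For the second assertion, let $K:\mathcal{C}\longrightarrow\mathcal{L}$ be an isomorphism; I would verify that $(\mathcal{C}\xrightarrow{L_{i}\circ K}\mathcal{D}_{i})$ is again a limit emergence of $\mathrm{D}$. Naturality is immediate: for each $\mathbb{I}$-morphism $i\xrightarrow{D}j$ one has $\mathrm{D}(D)\circ(L_{i}\circ K)=(\mathrm{D}(D)\circ L_{i})\circ K=L_{j}\circ K$, using the naturality of the original source. For the universal property, given any natural source $(\mathcal{C}'\xrightarrow{F_{i}}\mathcal{D}_{i})$ for $\mathrm{D}$, the limit $\mathcal{L}$ supplies a unique $G:\mathcal{C}'\longrightarrow\mathcal{L}$ with $F_{i}=L_{i}\circ G$; I would then set $H=K^{-1}\circ G$, so that $(L_{i}\circ K)\circ H=L_{i}\circ G=F_{i}$. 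Uniqueness follows because any $H'$ with $(L_{i}\circ K)\circ H'=F_{i}$ satisfies $L_{i}\circ(K\circ H')=F_{i}$, whence $K\circ H'=G$ by uniqueness of $G$, and cancelling the isomorphism (hence monomorphism) $K$ gives $H'=H$.

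None of the steps presents a genuine obstacle; the argument is purely formal and identical to the uniqueness proof for limits in an ordinary category, the only care needed being that the ambient morphisms here are functors in the quasi-category ${\mathbb{C}}_{\mathbb{O}}$ and that the uniqueness half of each universal property is invoked explicitly to collapse the composites $T\circ S$ and $S\circ T$ to identities. In effect this amounts to adapting the uniqueness argument of Proposition~10.22 in \cite{strecker:1990} to the present setting.
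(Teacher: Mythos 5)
Your proposal is correct and follows essentially the same route as the paper: both obtain $T$ and its candidate inverse from the two universal properties, and both collapse the composites to identities via the uniqueness clause (the paper phrases this as ``limits are mono-sources,'' which is exactly the argument you spell out), with the second assertion handled by transporting the universal property along the isomorphism $K$. Your write-up is somewhat more detailed than the paper's, which leaves the second part as ``similar to the first one,'' but there is no substantive difference in method.
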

\begin{proof}
Let ${(\mathcal{L}\xrightarrow{{L}_{i}}
{\mathcal{D}}_{i})}_{i\in\operatorname{Ob} (\mathbb{I})}$ and ${(\mathcal{L^{*}}
\xrightarrow{{L}_{i}^{*}}{\mathcal{D}}_{i})}_{i\in\operatorname{Ob}
(\mathbb{I})}$ be limits of $\mathrm{D}$. Then there exist unique functors
$T: \mathcal{L^{*}}\longrightarrow \mathcal{L}$ and
$K:\mathcal{L}\longrightarrow\mathcal{L^{*}}$ such that
${L}_{i}^{*}=L_{i}\circ T$ and $L_{i} = {L}_{i}^{*} \circ K$ for all
$i\in\operatorname{Ob} (\mathbb{I})$. Hence $L_{i} = L_{i} \circ
(T \circ K)$ and ${L}_{i}^{*} ={L}_{i}^{*} \circ (K \circ T)$. It
is easy to see that limits are mono-sources; so,
$T \circ K = {Id}_{\mathcal{L}}$ and $K \circ T =
{Id}_{{\mathcal{L}}^{*}}$. Therefore, $T$ is an isomorphism.
The second part is similar to the first one.
\end{proof}

\begin{definition}\label{naturemersink}
Let $\mathrm{D}:\mathbb{I}\longrightarrow{\mathbb{C}}_{\mathbb{O}}$
be a diagram emergence. A ${\mathbb{C}}_{\mathbb{O}}$-sink emergence
${({\mathcal{D}}_{i}\xrightarrow{{F}_{i}}\mathcal{C})}_{i \in
\operatorname{Ob}(\mathbb{I})}$
is called natural for $\mathrm{D}$ if for each $\mathbb{I}$-morphism
$i \xrightarrow{D} j$, the following triangle commutes.
\begin{eqnarray*}
\begin{tikzpicture}
  \matrix (m) [matrix of math nodes,row sep=3em,column sep=3em,minimum width=2em]
  {
     {\mathcal{D}}_{i} & {\mathcal{D}}_{j}\\
          & \mathcal{C}\\};
  \path[-stealth]
    (m-1-1) edge node [above] {$\mathrm{D}(D)$} (m-1-2)
    (m-1-1) edge node [left] {${F}_{i}$} (m-2-2)
    (m-1-2) edge node [right] {$F_{j}$} (m-2-2);
\end{tikzpicture}
\end{eqnarray*}
\end{definition}

Analogously to limit emergence, we can define \emph{co-limit} for emergences.

\begin{definition}\label{defcolimitemer}
Let $\mathrm{D}:\mathbb{I}\longrightarrow{\mathbb{C}}_{\mathbb{O}}$
be a diagram emergence. A co-limit emergence of $\mathrm{D}$ is a natural
sink ${({\mathcal{D}}_{i}\xrightarrow{{F}_{i}}\overline{\mathcal{L}})}_{i \in
\operatorname{Ob}(\mathbb{I})}$ for $\mathrm{D}$ such that, for
each natural sink ${({\mathcal{D}}_{i}
\xrightarrow{{G}_{i}}{\mathcal{C}})}_{i \in
\operatorname{Ob}(\mathbb{I})}$ for $\mathrm{D}$, there exists a unique
functor (universal property) $R:\overline{\mathcal{L}}
\longrightarrow \mathcal{C}$ such that
$G_{i} = R \circ F_{i}$ for all $i \in \operatorname{Ob}(\mathbb{I})$.
\end{definition}

Limits and co-limits of emergences are very interesting in our context,
since we can control a given biological situation by means of
controlling the schemes of the corresponding diagram. Because organisms have
a finite number of cells, tissues, etc, we can consider that the
corresponding schemes are finite. Therefore, we can control their
internal morphisms in order to imply some type of treatment. Moreover,
according to the ideas of Rosen (see Theorem~\cite[Theorem 3]{rosen:1958B}
in Section~\ref{sec5}) limits can provide "irreducible" ABDs. This is
interesting due to the fact that one can work with minimal collections
of objects and their corresponding functors.

\begin{remark}
$\operatorname{(1)}$ Similarly to limit emergences, co-limits emergences
are also unique up to isomorphisms.\\
$\operatorname{(2)}$ From the Duality Principle, co-limits emergences are
extremal epi-sinks emergences.\\
$\operatorname{(3)}$ Co-products emergences are particular cases of
co-limits emergences (co-limits of diagrams with discrete schemes).
\end{remark}

\subsection{Pullback and pushout emergence}\label{subpullbackeme}

In this subsection we deal with the introduction of \emph{pullback}
(pushout) of emergences. The following definition describes what
we mean by pullback in this context.

\begin{definition}\label{defpullbackemer}
Let $ {\mathcal{E}}_{\mathcal{A}}= ( \mathcal{A}, e_{\mathcal{A}},
U_{\mathcal{A}} )$, $ {\mathcal{E}}_{\mathcal{B}}=
( \mathcal{B}, e_{\mathcal{B}}, U_{\mathcal{B}} )$ and $ {\mathcal{E}}_{\mathcal{P}}=
( \mathcal{P}, e_{\mathcal{P}}, U_{\mathcal{P}} )$ be emergences.
A square
\begin{eqnarray*}
\begin{tikzpicture}
  \matrix (m) [matrix of math nodes,row sep=3em,column sep=3em,minimum width=2em]
  {
      \mathcal{P} & \mathcal{B} \\
      \mathcal{A} &  \mathcal{S}\\};
  \path[-stealth]
    (m-1-1) edge node [above] {$\overline{F}$} (m-1-2)
    (m-1-1) edge node [left] {$\overline{G}$}(m-2-1)
    (m-2-1) edge node [below] {$U_{\mathcal{A}}$}(m-2-2)
    (m-1-2) edge node [right] {$U_{\mathcal{B}}$}(m-2-2);
\end{tikzpicture}
\end{eqnarray*}
is said to be a pullback square emergence if it commutes and for all
commuting square
\begin{eqnarray*}
\begin{tikzpicture}
  \matrix (m) [matrix of math nodes,row sep=3em,column sep=3em,minimum width=2em]
  {
      \widehat{\mathcal{P}} & \mathcal{B} \\
      \mathcal{A} &  \mathcal{S}\\};
  \path[-stealth]
    (m-1-1) edge node [above] {$\widehat{F}$} (m-1-2)
    (m-1-1) edge node [left] {$\widehat{G}$}(m-2-1)
    (m-2-1) edge node [below] {$U_{\mathcal{A}}$}(m-2-2)
    (m-1-2) edge node [right] {$U_{\mathcal{B}}$}(m-2-2);
\end{tikzpicture}
\end{eqnarray*}
there exists a unique functor $L:\widehat{\mathcal{P}}
\longrightarrow\mathcal{P}$ such that the following diagram
\begin{eqnarray*}
\begin{tikzpicture}
  \matrix (m) [matrix of math nodes,row sep=3em,column sep=3em,minimum width=2em]
  {
     \widehat{\mathcal{P}} & &\\
     & \mathcal{P} & \mathcal{B} \\
     & \mathcal{A} &  \mathcal{S}\\};
  \path[-stealth]
    (m-1-1) edge node [above] {$L$} (m-2-2)
    (m-1-1) edge node [left] {$\widehat{G}$} (m-3-2)
    (m-1-1) edge node [above] {$\widehat{F}$} (m-2-3)
    (m-2-2) edge node [below] {$\overline{F}$} (m-2-3)
    (m-2-2) edge node [right] {$\overline{G}$}(m-3-2)
    (m-3-2) edge node [below] {$U_{\mathcal{A}}$}(m-3-3)
    (m-2-3) edge node [right] {$U_{\mathcal{B}}$}(m-3-3);
\end{tikzpicture}
\end{eqnarray*} commutes.
The $2$-source emergence $\mathcal{A} \xleftarrow{\overline{G}}\mathcal{P}
\xrightarrow{\overline{F}}  \mathcal{B}$ is called a
pullback emergence of the $2$-sink $\mathcal{A}\xrightarrow{U_{\mathcal{A}}} \mathcal{C}
 \xleftarrow{U_{\mathcal{B}}} \mathcal{B}$ and $\overline{F}$ is
said to be a pullback of $U_{\mathcal{A}}$ along $U_{\mathcal{B}}$.
\end{definition}

\begin{proposition}\label{pulbaemeruniq}
For fixed emergences $ {\mathcal{E}}_{\mathcal{A}}$
and ${\mathcal{E}}_{\mathcal{B}}$, pullback emergences are essentially unique.
\end{proposition}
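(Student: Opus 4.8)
The plan is to run the standard universal-property uniqueness argument, exactly as in the proof of essential uniqueness of product emergences (Proposition~\ref{uniquepro}). Suppose $\mathcal{A} \xleftarrow{\overline{G}} \mathcal{P} \xrightarrow{\overline{F}} \mathcal{B}$ and $\mathcal{A} \xleftarrow{\overline{G}'} \mathcal{P}' \xrightarrow{\overline{F}'} \mathcal{B}$ are both pullback emergences of the fixed $2$-sink $\mathcal{A} \xrightarrow{U_{\mathcal{A}}} \mathcal{S} \xleftarrow{U_{\mathcal{B}}} \mathcal{B}$. By Definition~\ref{defpullbackemer} each is a commuting square over that $2$-sink, so $U_{\mathcal{A}} \circ \overline{G} = U_{\mathcal{B}} \circ \overline{F}$ and likewise for the primed data.

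First I would feed $\mathcal{P}'$, viewed as the apex of a commuting square with legs $\overline{F}'$ and $\overline{G}'$, into the universal property of $\mathcal{P}$. This yields a unique functor $L : \mathcal{P}' \longrightarrow \mathcal{P}$ with $\overline{F} \circ L = \overline{F}'$ and $\overline{G} \circ L = \overline{G}'$. Symmetrically, feeding $\mathcal{P}$ into the universal property of $\mathcal{P}'$ produces a unique functor $L' : \mathcal{P} \longrightarrow \mathcal{P}'$ with $\overline{F}' \circ L' = \overline{F}$ and $\overline{G}' \circ L' = \overline{G}$.

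The crux is then to show that $L$ and $L'$ are mutually inverse, which is where the uniqueness clause of the universal property does the work. Composing the two factorizations gives $\overline{F} \circ (L \circ L') = \overline{F}$ and $\overline{G} \circ (L \circ L') = \overline{G}$, so both $L \circ L'$ and ${Id}_{\mathcal{P}}$ are factorizations, through the pullback $\mathcal{P}$, of the commuting square determined by $\mathcal{P}$ itself. By the uniqueness asserted in Definition~\ref{defpullbackemer}, $L \circ L' = {Id}_{\mathcal{P}}$, and symmetrically $L' \circ L = {Id}_{\mathcal{P}'}$. Hence $L$ is an isomorphism of constructs intertwining the two pullback $2$-sources (it commutes with the legs $\overline{F}, \overline{G}$), which is precisely essential uniqueness.

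I anticipate that the only delicate point is pure bookkeeping: when invoking the universal property of $\mathcal{P}$ I must confirm that $(\mathcal{P}', \overline{F}', \overline{G}')$ is an admissible competing square, which holds because it commutes over the same $2$-sink, and I should note that compatibility of the generalized underlying functors $U_{\mathcal{P}}$ and $U_{\mathcal{P}'}$ with $L$ is automatic, being determined by the construct structure that the functor $L$ preserves. No genuinely hard step arises; the argument is formally identical to the classical pullback uniqueness proof and parallels the reasoning used for Proposition~\ref{uniquepro}.
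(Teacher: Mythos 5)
Your proof is correct and follows essentially the same route as the paper, which simply defers to the argument for uniqueness of limit emergences (Proposition~\ref{uniqlimitemer}): obtain the two comparison functors from the universal properties, compose them, and use the uniqueness clause (equivalently, the mono-source property) to conclude they are mutually inverse isomorphisms compatible with the legs. No gap.
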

\begin{proof}
Similar to that of Proposition~\ref{uniqlimitemer}.
\end{proof}

There exists an interesting way to construct pullbacks (see for instance
Proposition $11.11$ in \cite{strecker:1990}). In our case,
we must define pullback between emergences, that is, we are working in quasi-categories.
Since our objects are constructs and our morphisms are functors, we
have the following construction.

\begin{theorem}(Pullback emergence construction)\label{pulluniemerco}
Let $ {\mathcal{E}}_{\mathcal{A}}= ( \mathcal{A}, e_{\mathcal{A}},
U_{\mathcal{A}} )$, $ {\mathcal{E}}_{\mathcal{B}}=
( \mathcal{B}, e_{\mathcal{B}}, U_{\mathcal{B}} )$ be emergences.
Then there exists an emergence ${\mathcal{E}}_{\mathcal{P}}=
( \mathcal{P}, e_{\mathcal{P}},
U_{\mathcal{P}} )$ and functors
$\mathcal{P} \xrightarrow{\overline{G}}\mathcal{A}$ and
$\mathcal{P} \xrightarrow{\overline{F}}\mathcal{B}$ such that the
square
\begin{eqnarray*}
\begin{tikzpicture}
  \matrix (m) [matrix of math nodes,row sep=3em,column sep=3em,minimum width=2em]
  {
      \mathcal{P} & \mathcal{B} \\
      \mathcal{A} &  \mathcal{S}\\};
  \path[-stealth]
    (m-1-1) edge node [above] {$\overline{F}$} (m-1-2)
    (m-1-1) edge node [left] {$\overline{G}$}(m-2-1)
    (m-2-1) edge node [below] {$U_{\mathcal{A}}$}(m-2-2)
    (m-1-2) edge node [right] {$U_{\mathcal{B}}$}(m-2-2);
\end{tikzpicture}
\end{eqnarray*}
is a pullback square.
\end{theorem}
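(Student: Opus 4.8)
The plan is to realize $\mathcal{P}$ as the strict fiber product (the pullback in the quasi-category of constructs) of $\mathcal{A}$ and $\mathcal{B}$ over $\mathcal{S}$ along $U_{\mathcal{A}}$ and $U_{\mathcal{B}}$. Concretely, I would declare $\operatorname{Ob}(\mathcal{P})$ to be the class of all pairs $(A, B)$ with $A \in \operatorname{Ob}(\mathcal{A})$, $B \in \operatorname{Ob}(\mathcal{B})$ and $U_{\mathcal{A}}(A) = U_{\mathcal{B}}(B)$, and, for two such objects $(A, B)$ and $(A^{*}, B^{*})$, set ${\operatorname{hom}}_{\mathcal{P}}((A, B), (A^{*}, B^{*}))$ to be the set of pairs $(f, g)$ with $f \in {\operatorname{hom}}_{\mathcal{A}}(A, A^{*})$, $g \in {\operatorname{hom}}_{\mathcal{B}}(B, B^{*})$ and $U_{\mathcal{A}}(f) = U_{\mathcal{B}}(g)$. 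Composition and identities are taken componentwise, $(f_2, g_2) \circ (f_1, g_1) = (f_2 \circ f_1, g_2 \circ g_1)$ and ${id}_{(A, B)} = ({id}_{A}, {id}_{B})$. Well-definedness follows at once from functoriality: $U_{\mathcal{A}}(f_2 \circ f_1) = U_{\mathcal{A}}(f_2) \circ U_{\mathcal{A}}(f_1) = U_{\mathcal{B}}(g_2) \circ U_{\mathcal{B}}(g_1) = U_{\mathcal{B}}(g_2 \circ g_1)$, so composites again lie in the appropriate hom-set, while associativity and the identity laws are inherited coordinatewise from $\mathcal{A}$ and $\mathcal{B}$.

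Next I would equip $\mathcal{P}$ with a construct structure and a GU functor. Since each object $(A, B)$ carries a well-defined common underlying set $U_{\mathcal{A}}(A) = U_{\mathcal{B}}(B)$, I would take $e_{\mathcal{P}}$ to be the operations of $\mathcal{A}$ and of $\mathcal{B}$ acting on the respective coordinates, making $\mathcal{P}$ a construct, and define the projections $\overline{G}:\mathcal{P}\longrightarrow\mathcal{A}$, $\overline{F}:\mathcal{P}\longrightarrow\mathcal{B}$ by $\overline{G}(A, B) = A$, $\overline{G}(f, g) = f$ and $\overline{F}(A, B) = B$, $\overline{F}(f, g) = g$. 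A componentwise check in the spirit of Proposition~\ref{newprop20} shows $\overline{F}$ and $\overline{G}$ are functors, and I would then set $U_{\mathcal{P}} := U_{\mathcal{A}} \circ \overline{G} = U_{\mathcal{B}} \circ \overline{F}$; this sends $(A, B)$ to its common underlying set and $(f, g)$ to the common function $U_{\mathcal{A}}(f) = U_{\mathcal{B}}(g)$, hence is a GU functor in the sense of Definition~\ref{defnew9}. By the very choice of $\operatorname{Ob}(\mathcal{P})$ and of its morphisms, the square commutes.

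It remains to establish the universal property. Given any construct $\widehat{\mathcal{P}}$ with functors $\widehat{G}:\widehat{\mathcal{P}}\longrightarrow\mathcal{A}$ and $\widehat{F}:\widehat{\mathcal{P}}\longrightarrow\mathcal{B}$ satisfying $U_{\mathcal{A}} \circ \widehat{G} = U_{\mathcal{B}} \circ \widehat{F}$, I would define $L:\widehat{\mathcal{P}}\longrightarrow\mathcal{P}$ by $L(X) = (\widehat{G}(X), \widehat{F}(X))$ on objects and $L(h) = (\widehat{G}(h), \widehat{F}(h))$ on morphisms. The commuting-square hypothesis says exactly that $U_{\mathcal{A}}(\widehat{G}(X)) = U_{\mathcal{B}}(\widehat{F}(X))$ and $U_{\mathcal{A}}(\widehat{G}(h)) = U_{\mathcal{B}}(\widehat{F}(h))$, so $L(X)$ and $L(h)$ land in $\mathcal{P}$; functoriality of $L$ is inherited coordinatewise. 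By construction $\overline{G} \circ L = \widehat{G}$ and $\overline{F} \circ L = \widehat{F}$. For uniqueness, any $L'$ with these two equalities must send $X$ to a pair with first coordinate $\widehat{G}(X)$ and second coordinate $\widehat{F}(X)$, i.e. $L'(X) = L(X)$, and likewise on morphisms, whence $L' = L$.

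The bulk of the argument is routine once the fiber-product construct is written down; the step I would expect to be the only delicate one is the construct/GU bookkeeping, namely confirming that the equality $U_{\mathcal{A}}(A) = U_{\mathcal{B}}(B)$ genuinely furnishes a legitimate common underlying set so that $U_{\mathcal{P}}$ qualifies as a GU functor, and that the chosen $e_{\mathcal{P}}$ indeed makes $\mathcal{P}$ a construct. Because both $U_{\mathcal{A}}$ and $U_{\mathcal{B}}$ have the same codomain $\mathcal{S}$, this identification causes no trouble, and I note that no faithfulness of the GU functors is invoked anywhere, consistent with the generalized setting of Definition~\ref{defnew9}.
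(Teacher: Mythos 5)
Your proposal is correct and follows essentially the same route as the paper: the strict fiber product of $\mathcal{A}$ and $\mathcal{B}$ over $\mathcal{S}$, with restricted projections as $\overline{F}$ and $\overline{G}$, componentwise verification that composites and identities stay in $\mathcal{P}$, and the coordinatewise definition of $L$ for the universal property. The only cosmetic difference is that you set $U_{\mathcal{P}} = U_{\mathcal{A}} \circ \overline{G}$ while the paper simply takes the usual underlying functor of $\mathcal{P}$ at the end, and the paper additionally flags that the equality $U_{\mathcal{A}}(A) = U_{\mathcal{B}}(B)$ is to be read in the generalized semi-underlying sense of Definition~\ref{defnew9semi} --- the same bookkeeping point you identify as the delicate step.
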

\begin{proof}
Define $\mathcal{P}$ as follows:
$\operatorname{Ob}(\mathcal{P})=\{ (A, B) \in
\operatorname{Ob}(\mathcal{A})\times \operatorname{Ob}(\mathcal{B}) \ | \
U_{\mathcal{A}}(A)=U_{\mathcal{B}}(B)\} $ and
$\operatorname{Mor}(\mathcal{P})= \{(f, g) \in \operatorname{Mor}
(\mathcal{A})\times \operatorname{Mor}(\mathcal{B}) \ | \
U_{\mathcal{A}}(f)=U_{\mathcal{B}}(g)\}$.
Here, we write the equality $U_{\mathcal{A}}(A)=
U_{\mathcal{B}}(B)$ for the GU functors $U_{\mathcal{A}}$ and
$U_{\mathcal{B}}$ considering a more general
setting. More precisely, these GU functors are considered, in fact,
generalized semi-underlying functors (see
Definition~\ref{defnew9semi} of Subsection~\ref{subsemieme}).

We will see that $\operatorname{Mor}(\mathcal{P})$ is well defined.
In fact, for every $(A, B) \in \operatorname{Ob}(\mathcal{P})$, the pair
$({id}_{A}, {id}_{B}) \in \operatorname{Mor}(\mathcal{P})$,
because $U_{\mathcal{A}}({id}_{A})=U_{\mathcal{B}}({id}_{B})$.
Furthermore, consider that $(f_{1}, g_{1}), (f_{2}, g_{2})$
belong to $\operatorname{Mor}(\mathcal{P})$ such that the
componentwise composites are defined, \emph{i.e.},
$A_{1}\xrightarrow{f_{1}} A_{2}$, $ A_{2}\xrightarrow{f_{2}} A_{3}$ and
$B_{1}\xrightarrow{g_{1}} B_{2}$, $ B_{2}\xrightarrow{g_{2}} B_{3}$. Since
$U_{\mathcal{A}}$ and $U_{\mathcal{B}}$ are functors, we have
\begin{eqnarray*}
U_{\mathcal{A}}(f_{2}\circ f_{1})&=& U_{\mathcal{A}}(f_{2})\circ
U_{\mathcal{A}}(f_{1})\\&=&
U_{\mathcal{B}}(g_{2})\circ U_{\mathcal{B}}(g_{1})\\&=&
U_{\mathcal{B}}(g_{2}\circ g_{1})
\end{eqnarray*}
Thus, the composite $(f_{2}\circ f_{1},  g_{2}\circ g_{1})$ also belongs to
$\operatorname{Mor}(\mathcal{P})$.

Let $\overline{F}={[{\Pi}_{\mathcal{B}}]}_{{|}_{\mathcal{P}}}$
and $\overline{G}={[{\Pi}_{\mathcal{A}}]}_{{|}_{\mathcal{P}}}$.
From construction, the square
\begin{eqnarray}\label{squareI}
\begin{tikzpicture}
  \matrix (m) [matrix of math nodes,row sep=3em,column sep=3em,minimum width=2em]
  {
      \mathcal{P} & \mathcal{B} \\
      \mathcal{A} &  \mathcal{S}\\};
  \path[-stealth]
    (m-1-1) edge node [above] {${[{\Pi}_{\mathcal{B}}]}_{{|}_{\mathcal{P}}}$} (m-1-2)
    (m-1-1) edge node [left] {${[{\Pi}_{\mathcal{A}}]}_{{|}_{\mathcal{P}}}$}(m-2-1)
    (m-2-1) edge node [below] {$U_{\mathcal{A}}$}(m-2-2)
    (m-1-2) edge node [right] {$U_{\mathcal{B}}$}(m-2-2);
\end{tikzpicture}
\end{eqnarray} commutes. Assume that the square
\begin{eqnarray*}
\begin{tikzpicture}
  \matrix (m) [matrix of math nodes,row sep=3em,column sep=3em,minimum width=2em]
  {
      \widehat{\mathcal{P}} & \mathcal{B} \\
      \mathcal{A} &  \mathcal{S}\\};
  \path[-stealth]
    (m-1-1) edge node [above] {$\widehat{F}$} (m-1-2)
    (m-1-1) edge node [left] {$\widehat{G}$}(m-2-1)
    (m-2-1) edge node [below] {$U_{\mathcal{A}}$}(m-2-2)
    (m-1-2) edge node [right] {$U_{\mathcal{B}}$}(m-2-2);
\end{tikzpicture}
\end{eqnarray*} commutes. Thus $U_{\mathcal{B}}\circ \widehat{F}=
U_{\mathcal{A}}\circ \widehat{G}$. Take a $\widehat{\mathcal{P}}$-object
$X$; one has $\left[ U_{\mathcal{B}}\circ \widehat{F}\right](X)=
\left[U_{\mathcal{A}}\circ \widehat{G}\right](X)$. Hence,
$(\widehat{G}(X) , \widehat{F}(X))$ belongs to
$\operatorname{Ob}(\mathcal{P})$. Analogously, given a
$\widehat{\mathcal{P}}$-morphism ${\widehat{P}}_{1}
\xrightarrow{\widehat{p}} {\widehat{P}}_{2}$ we have
$\left[U_{\mathcal{B}}\circ \widehat{F}\right](\widehat{p})=
\left[U_{\mathcal{A}}\circ \widehat{G}\right](\widehat{p})$. Thus,
$\left(\widehat{G}(\widehat{p}) , \widehat{F}(\widehat{p})\right)
\in \operatorname{Mor}(\mathcal{P})$.

Let us define $L:\widehat{\mathcal{P}}\longrightarrow\mathcal{P}$ as
follows: for each $\widehat{\mathcal{P}}$-object $X$,
$L(X)= \left(\widehat{G}(X) , \widehat{F}(X)\right)$, and for each
$\widehat{\mathcal{P}}$-morphism $\widehat{p}$,
$L(\widehat{p})= \left(\widehat{G}(\widehat{p}),
\widehat{F}(\widehat{p})\right)$.
We know that $L$ is well defined,
${[{\Pi}_{\mathcal{A}}]}_{{|}_{\mathcal{P}}}\circ L = \widehat{G}$ and
${[{\Pi}_{\mathcal{B}}]}_{{|}_{\mathcal{P}}}\circ L = \widehat{F}$.
It is easy to see that $L$ is a functor.

To show that $L$ is unique, assume that $L^{*}$ is a functor
$L^{*}:\widehat{\mathcal{P}}\longrightarrow\mathcal{P}$ such that
${[{\Pi}_{\mathcal{A}}]}_{{|}_{\mathcal{P}}}\circ L^{*}
= \widehat{G}$ and
${[{\Pi}_{\mathcal{B}}]}_{{|}_{\mathcal{P}}}\circ L^{*} = \widehat{F}$.
Then ${[{\Pi}_{\mathcal{A}}]}_{{|}_{\mathcal{P}}}\circ L^{*}
={[{\Pi}_{\mathcal{A}}]}_{{|}_{\mathcal{P}}}\circ L$ and
${[{\Pi}_{\mathcal{B}}]}_{{|}_{\mathcal{P}}}\circ L^{*}=
{[{\Pi}_{\mathcal{B}}]}_{{|}_{\mathcal{P}}}\circ L$.
Let $X$ be a $\widehat{\mathcal{P}}$-object and consider that
$L^{*}(X)=(L_{1}^{*}(X), L_{2}^{*}(X))$, with
$(L_{1}^{*}(X), L_{2}^{*}(X)) \in \operatorname{Ob} (\mathcal{P})$. Thus,
\begin{eqnarray*}
\left[{[{\Pi}_{\mathcal{A}}]}_{{|}_{\mathcal{P}}}\circ L^{*}\right](X) &=&
\left[{[{\Pi}_{\mathcal{A}}]}_{{|}_{\mathcal{P}}}\circ L\right](X)\\
&\Longrightarrow &
\left[{[{\Pi}_{\mathcal{A}}]}_{{|}_{\mathcal{P}}}\right](L_{1}^{*}(X), L_{2}^{*}(X))=
\left[{[{\Pi}_{\mathcal{A}}]}_{{|}_{\mathcal{P}}}\right](\widehat{G}(X) ,
\widehat{F}(X))\\&\Longrightarrow &
L_{1}^{*}(X)=\widehat{G}(X)
\end{eqnarray*}
and
\begin{eqnarray*}
\left[{[{\Pi}_{\mathcal{B}}]}_{{|}_{\mathcal{P}}}\circ L^{*}\right](X)&=&
\left[{[{\Pi}_{\mathcal{B}}]}_{{|}_{\mathcal{P}}}\circ L\right](X)\\&\Longrightarrow &
\left[{[{\Pi}_{\mathcal{B}}]}_{{|}_{\mathcal{P}}}\right](L_{1}^{*}(X), L_{2}^{*}(X))=
\left[{[{\Pi}_{\mathcal{B}}]}_{{|}_{\mathcal{P}}}\right]
(\widehat{G}(X) , \widehat{F}(X))\\&\Longrightarrow&
L_{2}^{*}(X)=\widehat{F}(X)
\end{eqnarray*}
Hence, it follows that $L_{1}^{*}=\widehat{G}$ and $L_{2}^{*}=\widehat{F}$, so
$L^{*}=L$ on $\widehat{\mathcal{P}}$-objects. Proceeding similarly to
morphisms, one has $L^{*}=L$ on $\widehat{\mathcal{P}}$-morphism; so, $L^{*}=L$.
Therefore, the Square~\ref{squareI} is a pullback, as required. To complete
the proof, it suffices to define the GU as being the usual underlying
functor functor of $\mathcal{P}$.
\end{proof}

Pullback are important to describe emergence phenomena because
we can extract ``new information from old ones". More precisely,
if we have two correlated biological systems $\mathcal{A}$ and $\mathcal{B}$
for example, we can estimate new correlations
between them, since pullbacks have at least the same
information than $\mathcal{A}$ and $\mathcal{B}$
because pullbacks are constructed based on products.

Analogously to pullback, we can also define \emph{pushout} of emergences.

\begin{definition}\label{defpushoutemer}
Let $ {\mathcal{E}}_{\mathcal{A}}= ( \mathcal{A}, e_{\mathcal{A}},
U_{\mathcal{A}} )$, $ {\mathcal{E}}_{\mathcal{B}}=
( \mathcal{B}, e_{\mathcal{B}}, U_{\mathcal{B}} )$ and $ {\mathcal{E}}_{\mathcal{P}}=
( \mathcal{C}, e_{\mathcal{C}}, U_{\mathcal{C}} )$ be emergences
 A square
\begin{eqnarray*}
\begin{tikzpicture}
  \matrix (m) [matrix of math nodes,row sep=3em,column sep=3em,minimum width=2em]
  {
      \mathcal{C} & \mathcal{B} \\
      \mathcal{A} &  \mathcal{S}\\};
  \path[-stealth]
    (m-1-1) edge node [above] {$F$} (m-1-2)
    (m-1-1) edge node [left] {$G$}(m-2-1)
    (m-2-1) edge node [below] {$U_{\mathcal{A}}$}(m-2-2)
    (m-1-2) edge node [right] {$U_{\mathcal{B}}$}(m-2-2);
\end{tikzpicture}
\end{eqnarray*}
is said to be a pushout emergence square if it commutes and for all
commuting square
\begin{eqnarray*}
\begin{tikzpicture}
  \matrix (m) [matrix of math nodes,row sep=3em,column sep=3em,minimum width=2em]
  {
      \mathcal{C} & \mathcal{B} \\
      \mathcal{A} &  \mathcal{P}\\};
  \path[-stealth]
    (m-1-1) edge node [above] {$F$} (m-1-2)
    (m-1-1) edge node [left] {$G$}(m-2-1)
    (m-2-1) edge node [below] {$\overline{F}$}(m-2-2)
    (m-1-2) edge node [right] {$\overline{G}$}(m-2-2);
\end{tikzpicture}
\end{eqnarray*}
there exists a unique functor $L:\mathcal{S}
\longrightarrow\mathcal{P}$ such that the following diagram
\begin{eqnarray*}
\begin{tikzpicture}
  \matrix (m) [matrix of math nodes,row sep=3em,column sep=3em,minimum width=2em]
  {
      \mathcal{C} & \mathcal{B} &\\
      \mathcal{A} & \mathcal{S} &\\
     & & \mathcal{P} \\};
  \path[-stealth]
    (m-1-1) edge node [above] {$F$} (m-1-2)
    (m-1-1) edge node [left] {$G$} (m-2-1)
    (m-2-1) edge node [above] {$U_{\mathcal{A}}$} (m-2-2)
    (m-2-1) edge node [right] {$\overline{F}$} (m-3-3)
    (m-2-2) edge node [right] {$K$}(m-3-3)
    (m-1-2) edge node [left] {$U_{\mathcal{B}}$}(m-2-2)
    (m-1-2) edge node [right] {$\overline{G}$}(m-3-3);
\end{tikzpicture}
\end{eqnarray*} commutes.
The $2$-sink emergence $((U_{\mathcal{B}}, U_{\mathcal{A}}), \mathcal{S})$
is called a pushout emergence of the $2$-source $(\mathcal{C}, (F, G))$,
and $U_{\mathcal{A}}$ is called pushout of $F$ along $G$.
\end{definition}

\begin{definition}\label{defpulation}
If a square is both pullback emergence and pushout emergence square then
it is called pulation emergence square.
\end{definition}

\begin{remark}
Although we have presented the definition of pushout emergence
(see Definition~\ref{defpushoutemer}) note that it is not
much interesting in our case. This can be understood as a biological system
that is ``redirected out" from its ``ground set" $\mathcal{S}$.
More precisely, we ``lose control" of the biological situation.
Due to this fact, we do not present results on pushouts
nor pulations.
\end{remark}

\subsection{Partial and relative emergence}\label{subpartialeme}

In this subsection, we introduce the concepts of \emph{partial}
and \emph{relative} emergences.
Partial emergences are important to
describe constructs which are emergent when compared with other
constructs, maintaining however their corresponding GU functors
(See Definition~\ref{partemer}). In the case of relative emergences,
we do not consider their GU functors (see Definition~\ref{relaemer}).
In the latter case, our interest is to compare ``purely" the
constructs.

\begin{definition}\label{partemer}
Let $\mathcal{A}$ and $\mathcal{B}$ be constructs. A partial
emergence ${\mathcal{E}}^{p}_{\mathcal{A}\mathcal{B}}$
is an ordered triple $ {\mathcal{E}}_{\mathcal{A}\mathcal{B}}=
( \mathcal{A}, \mathcal{B}, V_{\mathcal{A}\mathcal{B}})$,
where $V_{\mathcal{A}\mathcal{B}}:\mathcal{A}\longrightarrow
\mathcal{B}$ is a functor,
$|e_{\mathcal{A}}| > |e_{\mathcal{B}}|$, and
the following diagram
\begin{eqnarray*}
\begin{tikzpicture}
  \matrix (m) [matrix of math nodes,row sep=3em,column sep=3em,minimum width=2em]
  {
     \mathcal{B} & \mathcal{S} \\
     \mathcal{A} &  \\};
  \path[-stealth]
    (m-2-1) edge node [left] {$V_{\mathcal{A}\mathcal{B}}$} (m-1-1)
            edge node [below] {$U_{\mathcal{A}}$} (m-1-2)
    (m-1-1) edge node [above] {$U_{\mathcal{B}}$}(m-1-2);
\end{tikzpicture}
\end{eqnarray*}
commutes. The functor $V_{\mathcal{A}\mathcal{B}}$ is
called GU partial functor. The degree of the partial emergence
$\partial[{\mathcal{E}}^{p}_{\mathcal{A}\mathcal{B}}]$ is defined as
$\partial[{\mathcal{E}}_{\mathcal{A}\mathcal{B}}] = |e_{\mathcal{A}}|
- |e_{\mathcal{B}}|$.
\end{definition}

\begin{remark}
$\operatorname{ (1)}$ In the case in which $\mathcal{B}= \mathcal{S}$
($\mathcal{B}$ is not a construct in this case),
the partial emergence is an emergence in the sense of Definition~\ref{defnew10}.\\
$\operatorname{ (2)}$ From Definition~\ref{partemer}, it follows that
the functor $V_{\mathcal{A}\mathcal{B}}$ is a homomorphism
between $\mathcal{A}$ and $\mathcal{B}$; however, it not a strong homomorphism.
\end{remark}

\begin{proposition}\label{partemeprop}
Let ${\mathcal{E}}^{p}_{\mathcal{A}\mathcal{B}}$ and
${\mathcal{E}}^{p}_{\mathcal{B}\mathcal{C}}$ be partial emergences. Then
${\mathcal{E}}^{p}_{\mathcal{A}\mathcal{C}}$ is also a partial emergence.
\end{proposition}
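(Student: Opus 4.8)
The plan is to construct the partial emergence ${\mathcal{E}}^{p}_{\mathcal{A}\mathcal{C}}$ explicitly and then verify the three requirements of Definition~\ref{partemer}. The two given partial emergences supply GU partial functors $V_{\mathcal{A}\mathcal{B}}:\mathcal{A}\longrightarrow\mathcal{B}$ and $V_{\mathcal{B}\mathcal{C}}:\mathcal{B}\longrightarrow\mathcal{C}$, so the natural candidate for the GU partial functor of ${\mathcal{E}}^{p}_{\mathcal{A}\mathcal{C}}$ is the composite $V_{\mathcal{A}\mathcal{C}}=V_{\mathcal{B}\mathcal{C}}\circ V_{\mathcal{A}\mathcal{B}}:\mathcal{A}\longrightarrow\mathcal{C}$. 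First I would observe that $V_{\mathcal{A}\mathcal{C}}$ is indeed a functor, since the composite of two functors is again a functor (recalled in Section~\ref{sec3}).

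Next I would check the commutativity condition, namely $U_{\mathcal{C}}\circ V_{\mathcal{A}\mathcal{C}}=U_{\mathcal{A}}$. This follows from a short diagram chase using associativity of composition: by definition of the two partial emergences we have $U_{\mathcal{B}}\circ V_{\mathcal{A}\mathcal{B}}=U_{\mathcal{A}}$ and $U_{\mathcal{C}}\circ V_{\mathcal{B}\mathcal{C}}=U_{\mathcal{B}}$, whence $U_{\mathcal{C}}\circ(V_{\mathcal{B}\mathcal{C}}\circ V_{\mathcal{A}\mathcal{B}})=(U_{\mathcal{C}}\circ V_{\mathcal{B}\mathcal{C}})\circ V_{\mathcal{A}\mathcal{B}}=U_{\mathcal{B}}\circ V_{\mathcal{A}\mathcal{B}}=U_{\mathcal{A}}$. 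This is structurally identical to the transitivity argument already used in Proposition~\ref{prophomom}.

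Finally I would verify the cardinality condition $|e_{\mathcal{A}}|>|e_{\mathcal{C}}|$. By hypothesis $|e_{\mathcal{A}}|>|e_{\mathcal{B}}|$ and $|e_{\mathcal{B}}|>|e_{\mathcal{C}}|$; since every structure is a \emph{finite} set of operations (Definition~\ref{defnew8}), these cardinalities are natural numbers, and transitivity of the strict inequality gives $|e_{\mathcal{A}}|>|e_{\mathcal{C}}|$ at once. Assembling the three verifications shows that $(\mathcal{A},\mathcal{C},V_{\mathcal{A}\mathcal{C}})$ satisfies Definition~\ref{partemer}, so ${\mathcal{E}}^{p}_{\mathcal{A}\mathcal{C}}$ is a partial emergence. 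I do not anticipate any genuine obstacle here: the entire content is the composition-and-associativity bookkeeping, and the only point deserving an explicit word is the appeal to transitivity of the strict inequality on the finite orders, which is immediate. If desired, one could additionally record that the degree adds, $\partial[{\mathcal{E}}^{p}_{\mathcal{A}\mathcal{C}}]=\partial[{\mathcal{E}}^{p}_{\mathcal{A}\mathcal{B}}]+\partial[{\mathcal{E}}^{p}_{\mathcal{B}\mathcal{C}}]$, which follows by a trivial cancellation of $|e_{\mathcal{B}}|$.
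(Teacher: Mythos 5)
Your proposal is correct and follows essentially the same route as the paper: form the composite $V_{\mathcal{B}\mathcal{C}}\circ V_{\mathcal{A}\mathcal{B}}$, chase the triangle using $U_{\mathcal{B}}\circ V_{\mathcal{A}\mathcal{B}}=U_{\mathcal{A}}$ and $U_{\mathcal{C}}\circ V_{\mathcal{B}\mathcal{C}}=U_{\mathcal{B}}$, and conclude $|e_{\mathcal{A}}|>|e_{\mathcal{C}}|$ by transitivity of the strict inequality. Your added remarks on finiteness of the operation sets and on the additivity of the degree are correct refinements that the paper leaves implicit.
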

\begin{proof}
Let $V_{\mathcal{A}\mathcal{B}}:\mathcal{A}\longrightarrow \mathcal{B}$ and
$V_{\mathcal{B}\mathcal{C}}:\mathcal{B}\longrightarrow \mathcal{C}$ be GU partial functors.
We know that $V_{\mathcal{B}\mathcal{C}} \circ V_{\mathcal{A}\mathcal{B}}:
\mathcal{A}\longrightarrow \mathcal{C}$ is a functor from $\mathcal{A}$ to $\mathcal{C}$.
It is clear that $|e_{\mathcal{A}}| > |e_{\mathcal{C}}|$. Moreover, one has
$U_{\mathcal{C}}\circ V_{\mathcal{B}\mathcal{C}}=U_{\mathcal{B}}$ and
$U_{\mathcal{B}}\circ V_{\mathcal{A}\mathcal{B}}=U_{\mathcal{A}}$; so
$U_{\mathcal{C}}\circ (V_{\mathcal{B}\mathcal{C}} \circ
V_{\mathcal{A}\mathcal{B}})= U_{\mathcal{A}}$, that is, the diagram
\begin{eqnarray*}
\begin{tikzpicture}
  \matrix (m) [matrix of math nodes,row sep=3em,column sep=3em,minimum width=2em]
  {
     \mathcal{C} &  \\
      \mathcal{B} &  \mathcal{S} \\
     \mathcal{A}  & \\};
  \path[-stealth]
    (m-1-1) edge node [above] {$U_{\mathcal{C}}$} (m-2-2)
    (m-2-1) edge node [above] {$U_{\mathcal{B}}$} (m-2-2)
    (m-3-1) edge node [below] {$U_{\mathcal{A}}$} (m-2-2)
    (m-3-1) edge node [left] {$V_{\mathcal{A}\mathcal{B}}$}(m-2-1)
    (m-2-1) edge node [left] {$V_{\mathcal{B}\mathcal{C}}$} (m-1-1);
    \end{tikzpicture}
\end{eqnarray*}
commutes. Therefore, ${\mathcal{E}}^{p}_{\mathcal{A}\mathcal{C}}$ is a partial
emergence, as required.
\end{proof}

\begin{remark}\label{partemebio}
$\operatorname{(1)}$ It is interesting to note that Proposition~\ref{partemeprop} is
natural in the context of biological systems. More
precisely, it is natural that, if there exists an emergence phenomenon between
two stages $A$ and $B$ of a given organism and if there is
another emergence phenomenon between two stages $B$ and $C$ of the
same organism, then it is necessary the existence of an emergence
phenomenon between the stages $A$ and $C$ of such organism.
This fact is a good indicative that our theory is
very correlated with real phenomena with respect to ``living things".\\
$\operatorname{(2)}$ Note that partial emergences are not reflexive
nor symmetric. This is also natural in the scenario of biological
systems, in the sense that if a given system $\mathcal{A}$ is
``more emergent" than the system $\mathcal{B}$, then $\mathcal{B}$
cannot be ``more emergent" than $\mathcal{A}$. Furthermore, a given
system $\mathcal{A}$ cannot be ``more emergent" than itself.
\end{remark}

\begin{corollary}\label{corparteme}
For each $i = 1, 2, \ldots , n-1$, where $n > 1$ is an integer, assume that
${\mathcal{E}}^{p}_{{\mathcal{A}}_{i}{\mathcal{A}}_{i+1}}$ are partial
emergences. Then
${\mathcal{E}}^{p}_{{\mathcal{A}}_{1}{\mathcal{A}}_{n}}$ is also a partial emergence.
\end{corollary}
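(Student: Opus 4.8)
The plan is to prove the statement by induction on $n$, using Proposition~\ref{partemeprop} as the engine of the inductive step; the corollary is simply the iterated form of that proposition. The base case is $n = 2$, which holds trivially since $\mathcal{E}^p_{\mathcal{A}_1 \mathcal{A}_2}$ is a partial emergence by hypothesis. (One could equally well take $n = 3$ as the base case, since that instance is literally the content of Proposition~\ref{partemeprop}.)

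For the inductive step, I would assume that the result holds for every chain of $n-1$ composable partial emergences, so that from ${\mathcal{E}}^{p}_{{\mathcal{A}}_{1}{\mathcal{A}}_{2}}, \ldots, {\mathcal{E}}^{p}_{{\mathcal{A}}_{n-2}{\mathcal{A}}_{n-1}}$ we obtain that $\mathcal{E}^p_{\mathcal{A}_1 \mathcal{A}_{n-1}}$ is a partial emergence. By hypothesis $\mathcal{E}^p_{\mathcal{A}_{n-1} \mathcal{A}_n}$ is also a partial emergence, and its domain matches the codomain of $\mathcal{E}^p_{\mathcal{A}_1 \mathcal{A}_{n-1}}$. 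Applying Proposition~\ref{partemeprop} to these two partial emergences then yields directly that $\mathcal{E}^p_{\mathcal{A}_1 \mathcal{A}_n}$ is a partial emergence, completing the induction.

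Concretely, the witnessing GU partial functor is the composite
\begin{eqnarray*}
V_{\mathcal{A}_1 \mathcal{A}_n} = V_{\mathcal{A}_{n-1}\mathcal{A}_n} \circ V_{\mathcal{A}_1 \mathcal{A}_{n-1}} = V_{\mathcal{A}_{n-1}\mathcal{A}_n} \circ \cdots \circ V_{\mathcal{A}_1 \mathcal{A}_2},
\end{eqnarray*}
which is a functor from $\mathcal{A}_1$ to $\mathcal{A}_n$; the required triangle involving $U_{\mathcal{A}_1}$, $U_{\mathcal{A}_n}$ and $V_{\mathcal{A}_1 \mathcal{A}_n}$ commutes because it is pasted from the commuting triangles supplied by the two partial emergences, exactly as in the proof of Proposition~\ref{partemeprop}. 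The degree condition is obtained by chaining the strict inequalities $|e_{\mathcal{A}_i}| > |e_{\mathcal{A}_{i+1}}|$ for $i = 1, \ldots, n-1$, which gives $|e_{\mathcal{A}_1}| > |e_{\mathcal{A}_n}|$.

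There is no genuine obstacle here: the entire argument is bookkeeping, since Proposition~\ref{partemeprop} already establishes that the three defining properties of a partial emergence—functoriality of the composite, commutativity of the triangle with the generalized underlying functors, and the strict degree inequality—are each preserved under composition. The only point requiring minor care is verifying that the strict inequality survives the full chain, but since each step strictly decreases the cardinality of the operation set, transitivity of $>$ immediately gives the desired strict inequality between the first and last constructs.
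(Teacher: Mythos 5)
Your proof is correct and follows exactly the route the paper takes: the paper's own proof is simply ``follows by induction on $n$ and Proposition~\ref{partemeprop},'' and your write-up fills in the same induction with the composite GU partial functor, the pasted commuting triangles, and the chained strict inequalities $|e_{\mathcal{A}_1}| > |e_{\mathcal{A}_n}|$. No gaps; you have merely made explicit what the paper leaves implicit.
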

\begin{proof}
Follows directly by induction on $n$ and from Proposition~\ref{partemeprop}.
\end{proof}

Another important concept to be introduced is the \emph{relative emergence}.
As one can see in the next definition, such type of emergence does not
take into account the respectively underlying functors of the corresponding emergences.
The idea behind of this concept is to compare uniquely the constructs and
their respective number of operations.

\begin{definition}\label{relaemer}
Let $\mathcal{A}$ and $\mathcal{B}$ be constructs. A relative emergence
${\mathcal{E}}^{r}_{\mathcal{A}\rightarrow\mathcal{B}}$ is an ordered
triple $ {\mathcal{E}}^{r}_{\mathcal{A}\rightarrow\mathcal{B}}= ( \mathcal{A},
\mathcal{B}, R_{\mathcal{A}\mathcal{B}})$,
where $R_{\mathcal{A}\mathcal{B}}:\mathcal{A}\longrightarrow \mathcal{B}$ is a functor
and $|e_{\mathcal{A}}| > |e_{\mathcal{B}}|$.
The functor $R_{\mathcal{A}\mathcal{B}}$ is
called GU relative functor. The degree of the relative emergence
$\partial[{\mathcal{E}}^{r}_{\mathcal{A}\rightarrow\mathcal{B}}]$ is also defined as
$\partial[{\mathcal{E}}^{r}_{\mathcal{A}\rightarrow\mathcal{B}}] =
|e_{\mathcal{A}}| - |e_{\mathcal{B}}|$.
\end{definition}

\begin{proposition}\label{relaemeprop}
Let ${\mathcal{E}}^{r}_{\mathcal{A}\rightarrow\mathcal{B}}$ and
${\mathcal{E}}^{r}_{\mathcal{B}\rightarrow\mathcal{C}}$ be
relative emergences. Then ${\mathcal{E}}^{r}_{\mathcal{A}\rightarrow\mathcal{C}}$
is also a relative emergence.
\end{proposition}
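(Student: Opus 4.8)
The plan is to reuse the skeleton of the proof of Proposition~\ref{partemeprop}, but with a genuine simplification: since relative emergences (Definition~\ref{relaemer}) do not carry the commutativity constraint on the underlying functors that partial emergences do, the only two things I must supply are a functor $\mathcal{A}\longrightarrow\mathcal{C}$ and the strict cardinality inequality $|e_{\mathcal{A}}| > |e_{\mathcal{C}}|$.

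First I would let $R_{\mathcal{A}\mathcal{B}}:\mathcal{A}\longrightarrow\mathcal{B}$ and $R_{\mathcal{B}\mathcal{C}}:\mathcal{B}\longrightarrow\mathcal{C}$ denote the GU relative functors attached to the two given relative emergences. Since the composite of two functors is again a functor, the map $R_{\mathcal{B}\mathcal{C}}\circ R_{\mathcal{A}\mathcal{B}}:\mathcal{A}\longrightarrow\mathcal{C}$ is a functor, and I take it as the candidate GU relative functor $R_{\mathcal{A}\mathcal{C}}$. Next I would verify the order condition: by hypothesis $|e_{\mathcal{A}}| > |e_{\mathcal{B}}|$ and $|e_{\mathcal{B}}| > |e_{\mathcal{C}}|$, and because each $e_{(-)}$ is a finite set (Definition~\ref{defnew8}) its order is a natural number, so transitivity of strict order on $\mathbb{N}$ yields $|e_{\mathcal{A}}| > |e_{\mathcal{C}}|$. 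Hence the triple $(\mathcal{A}, \mathcal{C}, R_{\mathcal{A}\mathcal{C}})$ meets both clauses of Definition~\ref{relaemer}, so ${\mathcal{E}}^{r}_{\mathcal{A}\rightarrow\mathcal{C}}$ is a relative emergence.

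There is essentially no obstacle here: this statement is the ``underlying-functor-free'' shadow of Proposition~\ref{partemeprop}, so the entire content reduces to closure of functors under composition (already invoked in Proposition~\ref{prophomom}) together with transitivity of strict inequality of finite cardinalities. If I wanted, I would record as a byproduct the additivity of degrees, namely $\partial[{\mathcal{E}}^{r}_{\mathcal{A}\rightarrow\mathcal{C}}] = |e_{\mathcal{A}}| - |e_{\mathcal{C}}| = \partial[{\mathcal{E}}^{r}_{\mathcal{A}\rightarrow\mathcal{B}}] + \partial[{\mathcal{E}}^{r}_{\mathcal{B}\rightarrow\mathcal{C}}]$, which follows by telescoping the two differences through $|e_{\mathcal{B}}|$.
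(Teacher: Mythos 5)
Your proposal is correct and matches the paper's intent: the paper's proof is simply ``Follows directly,'' and the direct argument is exactly what you give — compose the two GU relative functors and apply transitivity of the strict inequality on the (finite) orders. The added remark on additivity of degrees is a correct bonus but not needed.
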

\begin{proof}
Follows directly.
\end{proof}

From Proposition~\ref{relaemeprop}, it follows that a relative emergence
is transitive. However, it is not reflexive nor symmetric. Note that
the same discussion presented in Remark~\ref{partemebio} can also be considered
here, in the case of relative emergence.

\begin{corollary}\label{corelaeme}
For each $i = 1, 2, \ldots , n-1$, where $n > 1$ is an integer,
assume that ${\mathcal{E}}^{r}_{{\mathcal{A}}_{i} \rightarrow{\mathcal{A}}_{i+1}}$
are relative emergences. Then ${\mathcal{E}}^{r}_{{\mathcal{A}}_{1}\rightarrow{\mathcal{A}}_{n}}$
is also a relative emergence.
\end{corollary}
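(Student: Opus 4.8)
The plan is to proceed by induction on $n$, exactly as in Corollary~\ref{corparteme}, using Proposition~\ref{relaemeprop} to fuse consecutive relative emergences. Recall that, by Definition~\ref{relaemer}, a relative emergence ${\mathcal{E}}^{r}_{\mathcal{A}\rightarrow\mathcal{B}}$ consists of only two pieces of data subject to conditions: a functor $R_{\mathcal{A}\mathcal{B}}:\mathcal{A}\longrightarrow\mathcal{B}$ and the strict inequality $|e_{\mathcal{A}}| > |e_{\mathcal{B}}|$. Both of these are the kind of structure that propagates along chains: functors compose, and the relation $>$ on the cardinals $|e_{{\mathcal{A}}_i}|$ is transitive.

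First I would dispose of the base case $n=2$, which is immediate: the hypothesis already furnishes ${\mathcal{E}}^{r}_{{\mathcal{A}}_{1}\rightarrow{\mathcal{A}}_{2}}$ as a relative emergence, and this is precisely the desired conclusion. For the inductive step, I would suppose the statement holds for some fixed $k$ with $2 \le k < n$, namely that ${\mathcal{E}}^{r}_{{\mathcal{A}}_{1}\rightarrow{\mathcal{A}}_{k}}$ is a relative emergence whenever ${\mathcal{E}}^{r}_{{\mathcal{A}}_{i}\rightarrow{\mathcal{A}}_{i+1}}$ are relative emergences for $i=1,\ldots,k-1$. Given the chain of hypotheses up to $i=k$, the inductive hypothesis yields a relative emergence ${\mathcal{E}}^{r}_{{\mathcal{A}}_{1}\rightarrow{\mathcal{A}}_{k}}$, with GU relative functor $R_{{\mathcal{A}}_{1}{\mathcal{A}}_{k}}$ and $|e_{{\mathcal{A}}_{1}}| > |e_{{\mathcal{A}}_{k}}|$. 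Applying Proposition~\ref{relaemeprop} to this relative emergence together with ${\mathcal{E}}^{r}_{{\mathcal{A}}_{k}\rightarrow{\mathcal{A}}_{k+1}}$ then produces ${\mathcal{E}}^{r}_{{\mathcal{A}}_{1}\rightarrow{\mathcal{A}}_{k+1}}$, whose functor is the composite $R_{{\mathcal{A}}_{k}{\mathcal{A}}_{k+1}}\circ R_{{\mathcal{A}}_{1}{\mathcal{A}}_{k}}$ and whose cardinality condition $|e_{{\mathcal{A}}_{1}}| > |e_{{\mathcal{A}}_{k+1}}|$ follows by transitivity from $|e_{{\mathcal{A}}_{1}}| > |e_{{\mathcal{A}}_{k}}| > |e_{{\mathcal{A}}_{k+1}}|$. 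This closes the induction and gives ${\mathcal{E}}^{r}_{{\mathcal{A}}_{1}\rightarrow{\mathcal{A}}_{n}}$ as a relative emergence.

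I do not anticipate any genuine obstacle here, since relative emergences --- unlike partial emergences --- carry no commutativity constraint tying $R_{\mathcal{A}\mathcal{B}}$ to the underlying functors $U_{\mathcal{A}}, U_{\mathcal{B}}$ (contrast Definition~\ref{partemer} with Definition~\ref{relaemer}). The only point that requires care is ensuring that at each stage the intermediate composite is itself a bona fide relative emergence, so that the hypotheses of Proposition~\ref{relaemeprop} are met before it is reapplied; this is exactly what the inductive hypothesis supplies, so the iteration of Proposition~\ref{relaemeprop} is legitimate. Alternatively, one could give a direct (non-inductive) argument by setting $R_{{\mathcal{A}}_{1}{\mathcal{A}}_{n}} = R_{{\mathcal{A}}_{n-1}{\mathcal{A}}_{n}}\circ\cdots\circ R_{{\mathcal{A}}_{1}{\mathcal{A}}_{2}}$ and observing that the telescoping chain $|e_{{\mathcal{A}}_{1}}| > |e_{{\mathcal{A}}_{2}}| > \cdots > |e_{{\mathcal{A}}_{n}}|$ yields $|e_{{\mathcal{A}}_{1}}| > |e_{{\mathcal{A}}_{n}}|$ at once.
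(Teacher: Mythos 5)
Your proof is correct and follows the same route as the paper, which simply invokes induction on $n$ together with Proposition~\ref{relaemeprop}; your write-up merely makes the base case, the inductive step, and the transitivity of $>$ explicit. No issues.
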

\begin{proof}
Follows directly from Proposition~\ref{relaemeprop} and by induction on $n$.
\end{proof}

Here, we define \emph{initial emergence} in a natural way.

\begin{definition}\label{iniquasi}
An emergence $ {\mathcal{E}}_{\mathcal{A}}= ( \mathcal{A},
e_{\mathcal{A}}, U_{\mathcal{A}} )$ is said to be initial if
for each emergence $ {\mathcal{E}}_{\mathcal{B}}=( \mathcal{B}, e_{\mathcal{B}},
U_{\mathcal{B}} )$, there exists a unique homomorphism
$T_{\mathcal{A}\mathcal{B}}$, $T_{\mathcal{A}\mathcal{B}}:
\mathcal{A} \longrightarrow \mathcal{B}$,
with $|e_{\mathcal{A}}|> |e_{\mathcal{B}}|$, that is,
$ {\mathcal{E}}^{r}_{\mathcal{A}\rightarrow\mathcal{B}}=( \mathcal{A},
\mathcal{B}, T_{\mathcal{A}\mathcal{B}})$ is a relative
emergence.
\end{definition}

\begin{example}
The empty category
${\mathcal{E}}_{\mathcal{C}(\emptyset)}=(\mathcal{C}(\emptyset),
e_{\mathcal{S}}=\emptyset,
U_{\mathcal{C}(\emptyset)}=Id_{\mathcal{C}(\emptyset)}=\emptyset)$, where
$\mathcal{C}(\emptyset)$ is the category without objects and without
morphisms is the unique initial category in the quasi-category whose
objects are all categories and the morphisms are all functors. Since the
empty category is not an emergence, there is no initial emergence.
\end{example}

\emph{Terminal emergences} can be also defined similarly.

\begin{definition}\label{termiquasi}
An emergence $ {\mathcal{E}}_{\mathcal{A}}= ( \mathcal{A},
e_{\mathcal{A}}, U_{\mathcal{A}} )$ is called terminal if
for each emergence $ {\mathcal{E}}_{\mathcal{B}}= ( \mathcal{B}, e_{\mathcal{B}},
U_{\mathcal{B}} )$ with $|e_{\mathcal{B}}|\geq 2$, there exists
a unique homomorphism
$T_{\mathcal{B}\mathcal{A}}: \mathcal{B} \longrightarrow \mathcal{A}$
such that ${\mathcal{E}}^{r}_{\mathcal{A}\rightarrow\mathcal{B}}=( \mathcal{A},
\mathcal{B}, T_{\mathcal{B}\mathcal{A}})$ is a relative emergence.
\end{definition}

\begin{example}
Let $\mathcal{B}$ be a construct with $|e_{\mathcal{B}}|=1$ and consider
the category $\mathcal{T}$ consisting of a unique
$\mathcal{B}$-object $B$ and the unique $\mathcal{B}$-morphism $B\xrightarrow{{id}_{B}}B$.
We form the emergence ${\mathcal{E}}_{\mathcal{T}}=
(\mathcal{T}, e_{\mathcal{T}}, U_{\mathcal{T}})$, where $U_{\mathcal{T}}$
is the usual underlying functor. For all emergence
$ {\mathcal{E}}_{\mathcal{A}}$ with $|e_{\mathcal{A}}|\geq 2$, there
exists a unique functor $F_{\mathcal{A}\mathcal{T}}: \mathcal{A}
\longrightarrow \mathcal{T}$. The functor sends all
$\mathcal{A}$-objects into $B$ and all $\mathcal{A}$-morphisms into ${id}_{B}$.
Since ${\mathcal{E}}^{r}_{\mathcal{A}\rightarrow\mathcal{T}}=( \mathcal{A},
\mathcal{T}, F_{\mathcal{A}\mathcal{T}})$ is a relative emergence, it
follows that $\mathcal{T}$ is terminal.
\end{example}

\begin{remark}\label{zeroemer}
The emergence $ {\mathcal{E}}_{\mathcal{A}}= ( \mathcal{A},
e_{\mathcal{A}}, U_{\mathcal{A}} )$ is called zero emergence
if it is both initial and terminal emergence. Since there is no initial
emergence, zero emergences do not exist.
\end{remark}

\subsection{Semi-emergence}\label{subsemieme}

In order to introduce \emph{semi-emergence}, we must define the
concept of \emph{generalized semi-underlying functor}.
In order to do this, we need to introduce some notation. Given a functor
$T:\mathcal{A}\longrightarrow \mathcal{B}$, we write
${\operatorname{Im}}_{\operatorname{(Ob)}}[T]$ to
denote the image of $T$ only in the objects of $\mathcal{A}$.

\begin{definition}\label{defnew9semi}
Let $\mathcal{A}$ be a construct. A generalized semi-underlying (GSU)
functor $U^{s}_{\mathcal{A}}$ with domain $\mathcal{A}$ is a functor
$U^{s}_{\mathcal{A}}: \mathcal{A}\longrightarrow \mathcal{S}$, $U^{s}_{\mathcal{A}}
(A\xrightarrow{f} A^{*})= \underline{B}
\xrightarrow{U^{s}_{\mathcal{A}}(f)} {\underline{B}}^{*}$, such that
$U^{s}_{\mathcal{A}}$ satisfies
${\operatorname{Im}}_{\operatorname{(Ob)}}[U^{s}_{\mathcal{A}}]=
{\operatorname{Im}}_{\operatorname{(Ob)}}[U_{\mathcal{A}}]$ and
$U^{s}_{\mathcal{A}}(f)$ is any function from
$\underline{B}$ to ${\underline{B}}^{*}$.
\end{definition}

\begin{remark}
It is interesting to note that in the previous definition we can
have $\underline{B}\neq\underline{A}$ or ${\underline{B}}^{*}
\neq{\underline{A}}^{*}$, and consequently, $U^{s}_{\mathcal{A}}(f)$ can also be
different from $\underline{f}$. The unique condition that
Definition~\ref{defnew9semi} imposes is that the image
(in the objects of $\mathcal{A}$) of both
$U^{s}_{\mathcal{A}}$ and $U_{\mathcal{A}}$ are equal.
\end{remark}

Based on the concept of SGU functor, we can now introduce semi-emergence.

\begin{definition}\label{defnew10semiequi}
A semi-emergence is an ordered triple ${\mathcal{E}}^{s}_{\mathcal{A}}=
( \mathcal{A}, e_{\mathcal{A}}, U^{s}_{\mathcal{A}})$,
where $\mathcal{A}$ is a construct, $e_{\mathcal{A}}$ is a finite set of operations
and $U^{s}_{\mathcal{A}}$ is a SGU functor
$U^{s}_{\mathcal{A}}:\mathcal{A}\longrightarrow \mathcal{S}$. The
order of $ {\mathcal{E}}_{\mathcal{A}}$ is defined as
$|e_{\mathcal{A}}|$.
\end{definition}

\begin{remark}
It is extremely important to note that the definition of semi-emergence
is almost analogous to that of emergence. However, there is
an important difference between them: in the former case, a given $\mathcal{A}$-object
$A$ is sent through $U^{s}_{\mathcal{A}}$ in some set of
${\operatorname{Im}}_{\operatorname{(Ob)}}[U_{\mathcal{A}}]$
and not necessarily in its underlying set
$\underline{A}$. This fact must be interpreted as
``keep the physical material the same" although the properties are lost,
as occurs concretely in emergent phenomena.
\end{remark}

Semi-emergences can be correlated by means of \emph{semi-homomorphisms}.

\begin{definition}\label{defnew14Asemi}
Let ${\mathcal{E}}^{s}_{\mathcal{A}}= ( \mathcal{A}, e_{\mathcal{A}},
U^{s}_{\mathcal{A}} )$ and $ {\mathcal{E}}^{s}_{\mathcal{B}}=
( \mathcal{B}, e_{\mathcal{B}}, U^{s}_{\mathcal{B}} )$ be semi-emergences. We
say that ${\mathcal{E}}^{s}_{\mathcal{A}}$ is semi-homomorphic
to ${\mathcal{E}}^{s}_{\mathcal{B}}$ if there exists a functor $F:\mathcal{A}
\longrightarrow\mathcal{B}$, called semi-homomorphism, such that
$U^{s}_{\mathcal{B}} \circ F = U^{s}_{\mathcal{A}}$.
We write $ {\mathcal{E}}^{s}_{\mathcal{A}} {\simeq}_{s}
{\mathcal{E}}^{s}_{\mathcal{B}}$ to denote that $ {\mathcal{E}}^{s}_{\mathcal{A}}$
is semi-homomorphic to ${\mathcal{E}}^{s}_{\mathcal{B}}$.
\end{definition}

\begin{proposition}\label{semihomouni}
Assume that the semi-emergence ${\mathcal{E}}^{s}_{\mathcal{A}}=
( \mathcal{A}, e_{\mathcal{A}}, U^{s}_{\mathcal{A}} )$ is
semi-homomorphic to ${\mathcal{E}}^{s}_{\mathcal{B}}= ( \mathcal{B}, e_{\mathcal{B}},
U^{s}_{\mathcal{B}} )$ by means of the semi-homomorphism $F$.
If $U^{s}_{\mathcal{B}}$ is faithful, then $F$ is uniquely determined by
its values on objects.
\end{proposition}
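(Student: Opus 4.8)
The plan is to show that any two semi-homomorphisms agreeing on objects must also agree on morphisms; faithfulness of $U^{s}_{\mathcal{B}}$ is precisely the ingredient that forces this. First I would let $G:\mathcal{A}\longrightarrow\mathcal{B}$ be an arbitrary semi-homomorphism with $G(A)=F(A)$ for every $\mathcal{A}$-object $A$, and reduce the statement to proving $G(f)=F(f)$ for every $\mathcal{A}$-morphism $f$.

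Next, since both $F$ and $G$ are semi-homomorphisms (Definition~\ref{defnew14Asemi}), I would record the two identities $U^{s}_{\mathcal{B}}\circ F=U^{s}_{\mathcal{A}}$ and $U^{s}_{\mathcal{B}}\circ G=U^{s}_{\mathcal{A}}$, whence $U^{s}_{\mathcal{B}}\circ F=U^{s}_{\mathcal{B}}\circ G$. Evaluating this equality of functors on an arbitrary morphism $A\xrightarrow{f}A^{*}$ yields $U^{s}_{\mathcal{B}}(F(f))=U^{s}_{\mathcal{B}}(G(f))$.

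The key observation is that, because $F$ and $G$ coincide on objects, the two $\mathcal{B}$-morphisms $F(f)$ and $G(f)$ have the same domain $F(A)=G(A)$ and the same codomain $F(A^{*})=G(A^{*})$; that is, $F(f),G(f)\in\operatorname{hom}_{\mathcal{B}}(F(A),F(A^{*}))$. Hence they lie in a single hom-set, and I can apply the corresponding hom-set restriction $U^{s}_{\mathcal{B}}:\operatorname{hom}_{\mathcal{B}}(F(A),F(A^{*}))\longrightarrow\operatorname{hom}_{\mathcal{S}}(U^{s}_{\mathcal{B}}(F(A)),U^{s}_{\mathcal{B}}(F(A^{*})))$. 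Faithfulness of $U^{s}_{\mathcal{B}}$ says exactly that this restriction is injective, so $U^{s}_{\mathcal{B}}(F(f))=U^{s}_{\mathcal{B}}(G(f))$ forces $F(f)=G(f)$. As $f$ was arbitrary, $F=G$ on morphisms, and together with the hypothesis $F=G$ on objects this gives $F=G$, proving that $F$ is determined by its values on objects.

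I do not anticipate a serious obstacle: the only point requiring care is the hom-set bookkeeping in the previous step, where one must verify that $F(f)$ and $G(f)$ genuinely share a domain and codomain before invoking injectivity, since faithfulness guarantees injectivity only \emph{within} a fixed hom-set and says nothing across distinct hom-sets. This is exactly why the assumption that the two functors agree on objects is essential and cannot be dropped.
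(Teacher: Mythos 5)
Your proposal is correct and follows essentially the same argument as the paper: both compare an arbitrary semi-homomorphism $G$ agreeing with $F$ on objects, derive $U^{s}_{\mathcal{B}}(F(f))=U^{s}_{\mathcal{A}}(f)=U^{s}_{\mathcal{B}}(G(f))$ from the defining identities, and invoke faithfulness to conclude $F(f)=G(f)$. Your explicit check that $F(f)$ and $G(f)$ share a domain and codomain (so that faithfulness, being injectivity on a fixed hom-set, actually applies) is a point the paper leaves implicit, but it is the same proof.
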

\begin{proof}
Let $F, G:\mathcal{A} \longrightarrow\mathcal{B}$ be semi-homomorphisms
such that, for each $\mathcal{A}$-object $A$, $F(A)=G(A)$. Thus, for each
$\mathcal{A}$-morphism $A \xrightarrow{f} A^{*}$ we have $F(A)=G(A)\xrightarrow{F(f)}
F(A^{*})=G(A^{*})$ and $F(A)=G(A)\xrightarrow{G(f)}F(A^{*})=G(A^{*})$, where
$[U^{s}_{\mathcal{B}} \circ F](f) = U^{s}_{\mathcal{A}}(f)=[U^{s}_{\mathcal{B}}
\circ G](f)$. Since $U^{s}_{\mathcal{B}}$ is faithful, it follows that
$F(f) = G(f)$ for all $\mathcal{A}$-morphism; so, $F=G$.
\end{proof}

Similarly, semi-equivalence of semi-emergences can be also introduced.

\begin{definition}\label{defequivsemiemer}
Let ${\mathcal{E}}^{s}_{\mathcal{A}}= ( \mathcal{A}, e_{\mathcal{A}},
U^{s}_{\mathcal{A}} )$ and ${\mathcal{E}}^{s}_{\mathcal{B}}=
( \mathcal{B}, e_{\mathcal{B}}, U^{s}_{\mathcal{B}} )$ be semi-emergences.
A semi-homomorphism $F:\mathcal{A} \longrightarrow \mathcal{B}$ is said to
be a semi-equivalence from ${\mathcal{E}}_{\mathcal{A}}$ to
${\mathcal{E}}_{\mathcal{B}}$ if $F$ is full, faithful and
isomorphism-dense. If there exists a semi-equivalence from
$\mathcal{A}$ to $\mathcal{B}$ we say that
${\mathcal{E}}_{\mathcal{A}}$ is semi-equivalent to
${\mathcal{E}}_{\mathcal{B}}$.
\end{definition}

In the following, we introduce the concept o semi-isomorphism
between semi-emergences.

\begin{definition}\label{defnew14Asemiiso}
Let ${\mathcal{E}}^{s}_{\mathcal{A}}= ( \mathcal{A}, e_{\mathcal{A}},
U^{s}_{\mathcal{A}} )$ and $ {\mathcal{E}}^{s}_{\mathcal{B}}=
( \mathcal{B}, e_{\mathcal{B}}, U^{s}_{\mathcal{B}} )$ be semi-emergences. We
say that ${\mathcal{E}}^{s}_{\mathcal{A}}$ is semi-isomorphic
to ${\mathcal{E}}^{s}_{\mathcal{B}}$ if there exists a functor $F:\mathcal{A}
\longrightarrow\mathcal{B}$, called semi-isomorphism, such that
$\mathcal{A}$ and $\mathcal{B}$ are isomorphic as categories and
$U^{s}_{\mathcal{B}} \circ F = U^{s}_{\mathcal{A}}$.
We write $ {\mathcal{E}}^{s}_{\mathcal{A}} {\equiv}_{s}
{\mathcal{E}}^{s}_{\mathcal{B}}$ to denote that $ {\mathcal{E}}^{s}_{\mathcal{A}}$
is semi-isomorphic to ${\mathcal{E}}^{s}_{\mathcal{B}}$.
\end{definition}

\emph{Strong semi-homomorphisms} are introduced in the following way.

\begin{definition}\label{defnew14ASsemi}
Let $ {\mathcal{E}}^{s}_{\mathcal{A}}= ( \mathcal{A}, e_{\mathcal{A}},
U^{s}_{\mathcal{A}} )$ and $ {\mathcal{E}}^{s}_{\mathcal{B}}=
( \mathcal{B}, e_{\mathcal{B}}, U^{s}_{\mathcal{B}} )$ be semi-emergences.
We say that ${\mathcal{E}}^{s}_{\mathcal{A}}$ is strong
semi-homomorphic to ${\mathcal{E}}^{s}_{\mathcal{B}}$ if
there exists a functor $F:\mathcal{A} \longrightarrow\mathcal{B}$
(called strong semi-homomorphism) such that $U^{s}_{\mathcal{B}}
\circ F = U^{s}_{\mathcal{A}}$ and $|e_{\mathcal{A}}|= |e_{\mathcal{B}}|$.
We write $ {\mathcal{E}}^{s}_{\mathcal{A}} {\simeq}_{s}^{st}
{\mathcal{E}}_{\mathcal{B}}$ to denote that
${\mathcal{E}}^{s}_{\mathcal{A}}$ is strong semi-homomorphic
to ${\mathcal{E}}^{s}_{\mathcal{B}}$.
\end{definition}

Here, we define what we mean by \emph{strong semi-isomorphism}.

\begin{definition}\label{defnew14ASsemiisostrong}
Let $ {\mathcal{E}}^{s}_{\mathcal{A}}= ( \mathcal{A}, e_{\mathcal{A}},
U^{s}_{\mathcal{A}} )$ and $ {\mathcal{E}}^{s}_{\mathcal{B}}=
( \mathcal{B}, e_{\mathcal{B}}, U^{s}_{\mathcal{B}} )$ be semi-emergences.
We say that ${\mathcal{E}}^{s}_{\mathcal{A}}$ is strong
semi-isomorphic to ${\mathcal{E}}^{s}_{\mathcal{B}}$ if
there exists a functor $F:\mathcal{A} \longrightarrow\mathcal{B}$
(called strong semi-isomorphism) such that $\mathcal{A}$ and
$\mathcal{B}$ are isomorphic as categories, $U^{s}_{\mathcal{B}}
\circ F = U^{s}_{\mathcal{A}}$ and $|e_{\mathcal{A}}|= |e_{\mathcal{B}}|$.
We write $ {\mathcal{E}}^{s}_{\mathcal{A}} {\equiv}_{s}^{st}
{\mathcal{E}}_{\mathcal{B}}$ to denote that
${\mathcal{E}}^{s}_{\mathcal{A}}$ is strong semi-isomorphic
to ${\mathcal{E}}^{s}_{\mathcal{B}}$.
\end{definition}

Note that our definitions of semi-emergences and their corresponding
(strong) semi-homomorphisms generalize that of emergence
and their corresponding homomorphisms. Additionally, such concepts preserve
nice properties, as establishes the following result.

\begin{proposition}\label{propstrongsemiisohomo}
The following conditions hold:
\begin{itemize}
\item [ $\operatorname{(1)}$] The relations ${\simeq}_{s}$ and
${\simeq}_{s}^{st}$ are both reflexive
and transitive.

\item [ $\operatorname{(2)}$] The relations ${\equiv}_{s}$ and
${\equiv}_{s}^{st}$ are both equivalence relations.
\end{itemize}
\end{proposition}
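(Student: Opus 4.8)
The plan is to handle the four relations by transcribing the arguments already used for the generalized-underlying versions in Propositions~\ref{prophomom}, \ref{prophomomS}, \ref{newteo15} and \ref{propisomoS}, since every clause defining ${\simeq}_{s}$, ${\simeq}_{s}^{st}$, ${\equiv}_{s}$ and ${\equiv}_{s}^{st}$ is expressed purely through a commuting equation $U^{s}_{\mathcal{B}}\circ F = U^{s}_{\mathcal{A}}$, together with (in the strong cases) the constraint $|e_{\mathcal{A}}|=|e_{\mathcal{B}}|$ and (in the isomorphism cases) the requirement that the witnessing functor be an isomorphism of categories. Consequently the fact that each $U^{s}$ is a GSU rather than a GU functor never enters the computation: only functoriality, associativity of composition, and invertibility of the witnessing functor are needed.

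First I would establish Item~(1). For reflexivity of ${\simeq}_{s}$, the identity functor ${Id}_{\mathcal{A}}:\mathcal{A}\longrightarrow\mathcal{A}$ satisfies $U^{s}_{\mathcal{A}}\circ {Id}_{\mathcal{A}} = U^{s}_{\mathcal{A}}$, so $ {\mathcal{E}}^{s}_{\mathcal{A}}{\simeq}_{s}{\mathcal{E}}^{s}_{\mathcal{A}}$. For transitivity, given semi-homomorphisms $F:\mathcal{A}\longrightarrow\mathcal{B}$ and $G:\mathcal{B}\longrightarrow\mathcal{C}$ with $U^{s}_{\mathcal{B}}\circ F = U^{s}_{\mathcal{A}}$ and $U^{s}_{\mathcal{C}}\circ G = U^{s}_{\mathcal{B}}$, the composite $G\circ F$ is a functor and $U^{s}_{\mathcal{C}}\circ(G\circ F) = (U^{s}_{\mathcal{C}}\circ G)\circ F = U^{s}_{\mathcal{B}}\circ F = U^{s}_{\mathcal{A}}$, giving $ {\mathcal{E}}^{s}_{\mathcal{A}}{\simeq}_{s}{\mathcal{E}}^{s}_{\mathcal{C}}$. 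For ${\simeq}_{s}^{st}$ I would repeat both arguments verbatim, noting that the cardinalities chain: $|e_{\mathcal{A}}|=|e_{\mathcal{A}}|$ in the reflexive case and $|e_{\mathcal{A}}|=|e_{\mathcal{B}}|=|e_{\mathcal{C}}|$ in the transitive case.

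Next I would prove Item~(2). Reflexivity and transitivity are obtained exactly as above, with the extra observations that ${Id}_{\mathcal{A}}$ is an isomorphism and that the composite of two isomorphisms of categories is again such an isomorphism, so the clause ``$\mathcal{A}$ isomorphic to $\mathcal{B}$ as categories'' is preserved under composition; the cardinality bookkeeping for ${\equiv}_{s}^{st}$ is identical to the strong case above. The genuinely new ingredient is symmetry: if $ {\mathcal{E}}^{s}_{\mathcal{A}}{\equiv}_{s}{\mathcal{E}}^{s}_{\mathcal{B}}$ is witnessed by an isomorphism $F:\mathcal{A}\longrightarrow\mathcal{B}$ with $U^{s}_{\mathcal{B}}\circ F = U^{s}_{\mathcal{A}}$, then the inverse functor $F^{-1}:\mathcal{B}\longrightarrow\mathcal{A}$ exists, and composing the equality on the right with $F^{-1}$ yields $U^{s}_{\mathcal{B}} = U^{s}_{\mathcal{A}}\circ F^{-1}$; hence $F^{-1}$ witnesses $ {\mathcal{E}}^{s}_{\mathcal{B}}{\equiv}_{s}{\mathcal{E}}^{s}_{\mathcal{A}}$, and in the strong case it inherits $|e_{\mathcal{B}}|=|e_{\mathcal{A}}|$.

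I expect no serious obstacle here; the single point demanding care is symmetry, where it is essential that the functor $F$ in Definitions~\ref{defnew14Asemiiso} and \ref{defnew14ASsemiisostrong} be an actual isomorphism of categories, so that $F^{-1}$ is available to reverse the commuting triangle. Under that reading the whole argument reduces to routine manipulation of composites and inverses of functors, paralleling the earlier propositions line for line.
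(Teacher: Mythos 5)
Your proposal is correct and follows essentially the same route as the paper, which simply refers back to the proofs of Propositions~\ref{prophomom} and~\ref{newteo15}: identity functor for reflexivity, composition of functors for transitivity (with the trivial cardinality bookkeeping in the strong cases), and the inverse of the witnessing isomorphism for symmetry. Your closing remark about needing to read the functor $F$ in Definitions~\ref{defnew14Asemiiso} and~\ref{defnew14ASsemiisostrong} as an actual isomorphism of categories is exactly the reading the paper's referenced argument relies on.
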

\begin{proof}
The proofs are similar to that of Propositions~\ref{prophomom}~and~\ref{newteo15},
respectively.
\end{proof}

\begin{remark}
The concepts of product (co-products), equalizer (co-equalizer),
pullback (pushout), limits (co-limits) of semi-emergences
and strong semi-emer-gences can be also defined similarly to these concepts
with respect to emergences. However, because the ideas are similar,
and to avoid that the length of the paper becomes excessive long,
we do not present such concepts here.
\end{remark}

\subsection{Internal structures in emergence}

In the previous sections we introduced concepts and definitions on emergence
and, with such theory, we have shown results and correlations among them.
Here, we are concerned to explore the operations of a given emergence, such as
existence of products, limits and so on. Because an emergence is a triple
$ {\mathcal{E}}_{\mathcal{A}}= ( \mathcal{A}, e_{\mathcal{A}},
U_{\mathcal{A}} )$ where $\mathcal{A}$ is a construct, we can
borrow its structure to define similar concept in the emergence context. For
example, we can easily define the concept a given emergence to have products:

\begin{definition}\label{interprodu}
Let $ {\mathcal{E}}_{\mathcal{A}}= ( \mathcal{A}, e_{\mathcal{A}},
U_{\mathcal{A}} )$ be an emergence. We say that
${\mathcal{E}}_{\mathcal{A}}$ has products if the construct $\mathcal{A}$
has product. In other words, for every family ${(A_{i})}_{i \in I}$
of $\mathcal{A}$-objects indexed for a set $I$ there exists a product
${({\prod}A_{i}\xrightarrow{{\pi}_{j}}A_{j})}_{I}$ in the sense
of Definition~10.19 in \cite{strecker:1990}. Analogously, the
emergence has finite products if every finite family
${(A_{i})}_{i \in I}$ of $\mathcal{A}$-objects there exists a product
${({\prod}A_{i}\xrightarrow{{\pi}_{j}}A_{j})}_{I}$ (according to
Definition~10.19 in \cite{strecker:1990}).
\end{definition}

As an example, let us consider the emergence $(\mathcal{F},  e_{\mathcal{F}},
U^{s}_{\mathcal{F}} )$, where $\mathcal{F}$ is the construct of all fields
and $U^{s}_{\mathcal{F}}$ is the usual underlying functor. Then it is known that
$\mathcal{F}$ does not have finite products. On the other hand, if one
considers the emergence $(\mathcal{V},  e_{\mathcal{V}},
U^{s}_{\mathcal{V}} )$ of real vectors spaces, where $U^{s}_{\mathcal{V}}$
is the usual underlying functor, then such emergence has products.

We can also rephrase Proposition~10.30 in \cite{strecker:1990} in order to have
conditions in which a given emergence has products:

\begin{proposition}\label{condintemer}
Let $ {\mathcal{E}}_{\mathcal{A}}= ( \mathcal{A}, e_{\mathcal{A}},
U_{\mathcal{A}} )$ be an emergence. Then
$ {\mathcal{E}}_{\mathcal{A}}$ has finite products if and only if $\mathcal{A}$
has terminal objects and products of pairs of objects.
\end{proposition}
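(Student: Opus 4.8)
The plan is to reduce the statement to a purely categorical fact about the construct $\mathcal{A}$ and then prove that fact by induction. By Definition~\ref{interprodu}, the emergence $\mathcal{E}_{\mathcal{A}}$ has finite products if and only if the construct $\mathcal{A}$ has finite products. Thus it suffices to show that $\mathcal{A}$ admits products of all finite families of objects if and only if $\mathcal{A}$ has a terminal object together with products of pairs of objects. I would state this reduction explicitly at the outset, since it is what lets the emergence-theoretic claim be settled by an argument about $\mathcal{A}$ alone.

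First I would establish the forward implication, which is immediate. If $\mathcal{A}$ has finite products, then the product of the empty family of objects is exactly a terminal object of $\mathcal{A}$, and the product of a two-element family is precisely a product of a pair; hence both required structures exist. For the converse, I would argue by induction on the cardinality $n$ of the family. The base case $n=0$ is handled by the terminal object (the empty product), the case $n=1$ is trivial since $A$ equipped with ${id}_{A}$ is its own product, and the case $n=2$ is the assumed existence of products of pairs.

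For the inductive step, given objects $A_1, \ldots, A_{n+1}$, I would first form $P=\prod_{i=1}^{n} A_i$ with projections $p_i$ (which exists by the inductive hypothesis), then form $P'= P\times A_{n+1}$ with projections $q_1: P'\longrightarrow P$ and $q_2: P'\longrightarrow A_{n+1}$ (which exists by the hypothesis on pairs), and finally define $\pi_i= p_i\circ q_1$ for $i\leq n$ and $\pi_{n+1}= q_2$. The verification that $(P', (\pi_i)_{i=1}^{n+1})$ is a product then follows by chaining the two universal properties: any source $(f_i: X\longrightarrow A_i)_{i=1}^{n+1}$ factors uniquely through $P$ on its first $n$ components, and the resulting map together with $f_{n+1}$ factors uniquely through $P'$, yielding the required unique mediating morphism $h: X\longrightarrow P'$ with $\pi_i\circ h= f_i$ for all $i$.

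The main obstacle is essentially bookkeeping rather than conceptual: I must carefully track the projections under the iterated product and confirm both existence and uniqueness of the mediating morphism at each stage of the induction. Once the compatibility $\pi_i\circ h= f_i$ is checked componentwise, uniqueness is inherited from the uniqueness clauses of the two universal properties used in the construction, so no genuine difficulty arises. This is precisely the content underlying the rephrasing of Proposition~10.30 in \cite{strecker:1990}.
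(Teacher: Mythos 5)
Your proposal is correct and is essentially the same argument as the paper's: the paper simply defers to Proposition~10.30 of \cite{strecker:1990}, and your reduction to the construct $\mathcal{A}$ followed by the standard induction (empty product as terminal object, iterated binary products, chained universal properties) is precisely the proof that citation supplies. No gaps.
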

\begin{proof}
See the proof of Proposition~10.30 in \cite{strecker:1990}.
\end{proof}

Recall that a \emph{lattice} is a partially ordered set in which every
pair of elements has a meet and a join. The class of all lattices (objects)
together with all lattice homomorphisms (that is, all maps preserving meets and
joins of pairs) as morphisms form a category under the usual composite
of functions and usual identities. A lattice is said to be complete if every
subset has both meet and join. Rephrasing Theorem~10.32 in
\cite{strecker:1990}, we have the following result.

\begin{theorem}\label{prodemergexist}
$\operatorname{(1)}$ An emergence $ {\mathcal{E}}_{\mathcal{A}}
= ( \mathcal{A}, e_{\mathcal{A}}, U_{\mathcal{A}} )$
that has products for all class-indexed families must be thin.\\
$\operatorname{(2)}$ A small emergence has products if and
only if it is equivalent to a complete lattice.
\end{theorem}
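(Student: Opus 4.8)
The plan is to reduce both parts to statements about the underlying construct $\mathcal{A}$, since by Definition~\ref{interprodu} the emergence $\mathcal{E}_{\mathcal{A}}$ has products precisely when $\mathcal{A}$ does, by Definition~\ref{defthinemer} it is thin precisely when $\mathcal{A}$ is, and by Definition~\ref{defequivemer} an equivalence of emergences is an equivalence of the constructs compatible with the GU functors. Thus the content is exactly Theorem~10.32 of \cite{strecker:1990}, whose proof I would adapt; the engine in both parts is a cardinality estimate on hom-sets, obtained from the fact that the covariant hom-functor $\operatorname{hom}_{\mathcal{A}}(A,-)$ preserves products, so that $\operatorname{hom}_{\mathcal{A}}(A,\prod_{i\in I}B)\cong\prod_{i\in I}\operatorname{hom}_{\mathcal{A}}(A,B)=\operatorname{hom}_{\mathcal{A}}(A,B)^{I}$ for any index class $I$.

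For Part~$(1)$, I would argue by contradiction. Suppose $\mathcal{A}$ is not thin, so there are objects $A,B$ with $|\operatorname{hom}_{\mathcal{A}}(A,B)|\geq 2$. Let $I$ be a proper class and form the constant-family product $B^{I}=\prod_{i\in I}B$, which exists by the class-indexed hypothesis. The hom-functor identity above gives an injection of the conglomerate of functions $I\to\operatorname{hom}_{\mathcal{A}}(A,B)$ into $\operatorname{hom}_{\mathcal{A}}(A,B^{I})$; composing with the characteristic-function embedding of the subclasses of $I$, we see $\operatorname{hom}_{\mathcal{A}}(A,B^{I})$ is at least as large as the power conglomerate of $I$, which by a Cantor-type size argument cannot be a set when $I$ is a proper class. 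Since $\mathcal{A}$ is a construct its hom-sets are sets, a contradiction. Hence $|\operatorname{hom}_{\mathcal{A}}(A,B)|\leq 1$ for all $A,B$, i.e. $\mathcal{A}$ is thin and $\mathcal{E}_{\mathcal{A}}$ is a thin emergence.

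For Part~$(2)$, the reverse implication is the easy half: if $\mathcal{E}_{\mathcal{A}}$ is equivalent to a complete lattice $L$ viewed as a thin emergence, then $L$ has all meets, hence all products, and a full, faithful, isomorphism-dense homomorphism preserves and reflects limits, so $\mathcal{A}$ inherits all products. For the forward implication, let $\mathcal{E}_{\mathcal{A}}$ be small with all products. Smallness bounds the situation: the collection of hom-sets of $\mathcal{A}$ is a set, so $\mu:=\sup_{A,B}|\operatorname{hom}_{\mathcal{A}}(A,B)|$ is a genuine cardinal. Re-running the computation of Part~$(1)$ with $I$ a \emph{set} of cardinality $\kappa$ yields $|\operatorname{hom}_{\mathcal{A}}(A,B^{\kappa})|\geq 2^{\kappa}$; choosing $\kappa$ with $2^{\kappa}>\mu$ forces $|\operatorname{hom}_{\mathcal{A}}(A,B)|\leq 1$, so $\mathcal{A}$ is thin. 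Passing to a skeleton, $\mathcal{A}$ is equivalent to a poset $P$, and restricting $U_{\mathcal{A}}$ to the skeleton makes this skeleton inclusion a homomorphism, hence an equivalence of emergences. Having all products in the thin category $P$ means every subclass of $P$ admits an infimum; a poset closed under arbitrary infima is a complete lattice, the suprema being recovered as infima of upper bounds and the empty product furnishing the top element. Therefore $\mathcal{E}_{\mathcal{A}}$ is equivalent, as an emergence, to the complete lattice $P$.

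The main obstacle I anticipate is not the categorical bookkeeping but the set-theoretic care required by the paper's class/conglomerate/set hierarchy in the two cardinality steps: one must justify cleanly that the power conglomerate of a proper class fails to be a set (Part~$(1)$) and that the supremum $\mu$ over all hom-sets genuinely exists for a small construct (Part~$(2)$). Everything else — the product-preservation of $\operatorname{hom}_{\mathcal{A}}(A,-)$, the equivalence of a thin category with its skeleton poset, and the passage from arbitrary meets to a complete lattice — is routine and can be cited from \cite{strecker:1990}.
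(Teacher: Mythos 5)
Your proposal is correct and follows essentially the same route as the paper, which simply defers to Theorem~10.32 of \cite{strecker:1990}: you reduce the claim about $\mathcal{E}_{\mathcal{A}}$ to the corresponding claim about the construct $\mathcal{A}$ and then reproduce that theorem's standard hom-set cardinality argument. The only added value is that you actually spell out the argument (including the set/class size issues) that the paper leaves entirely to the citation.
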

\begin{proof}
See the proof of Theorem~10.32 in \cite{strecker:1990}.
\end{proof}

Recall that a category $\mathcal{A}$ is said to have intersections if
for every $\mathcal{A}$-object $A$ e for every family of sub-objects
of $A$, there exists an intersection. We then can rewrite \cite[Definition 12.2]{strecker:1990}
in the context of emergences in a natural way.

\begin{definition}
An emergence $ {\mathcal{E}}_{\mathcal{A}}$ is said to be
finitely complete if for every
finite diagram in $\mathcal{A}$ there exists a limit; complete, if
for each small diagram in $\mathcal{A}$ there exists a limit; strong complete
if it is complete and has intersections.
\end{definition}

Theorem 12.3 of \cite{strecker:1990} reads as follows in terms of emergence.

\begin{theorem}
For every emergence ${\mathcal{E}}_{\mathcal{A}}$ the conditions
are equivalent:\\
$\operatorname{(1)}$ ${\mathcal{E}}_{\mathcal{A}}$ is complete;\\
$\operatorname{(2)}$ ${\mathcal{E}}_{\mathcal{A}}$ has products
and equalizers;\\
$\operatorname{(3)}$ ${\mathcal{E}}_{\mathcal{A}}$ has products
and finite intersections.
\end{theorem}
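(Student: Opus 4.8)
The plan is to observe first that, for the emergence ${\mathcal{E}}_{\mathcal{A}}= ( \mathcal{A}, e_{\mathcal{A}}, U_{\mathcal{A}} )$, each of the three conditions is, by the definitions immediately preceding this statement, a property of the underlying construct $\mathcal{A}$ alone: completeness means that every small diagram in $\mathcal{A}$ admits a limit, while ``has products'', ``has equalizers'', and ``has finite intersections'' are likewise conditions formulated purely on $\mathcal{A}$. Neither the set of operations $e_{\mathcal{A}}$ nor the GU functor $U_{\mathcal{A}}$ plays any role. Consequently the entire statement reduces to the classical equivalence for the category $\mathcal{A}$, namely Theorem 12.3 in \cite{strecker:1990}, and it suffices to carry that argument over into our notation.

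The two easy directions I would dispatch first. Both $\operatorname{(1)}\Longrightarrow\operatorname{(2)}$ and $\operatorname{(1)}\Longrightarrow\operatorname{(3)}$ are immediate, since products are limits of diagrams with discrete scheme, equalizers are limits over a parallel pair, and finite intersections are limits of suitable finite diagrams; if every small limit exists in $\mathcal{A}$, then in particular each of these does.

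The substantive direction is $\operatorname{(2)}\Longrightarrow\operatorname{(1)}$. Given a diagram $\mathrm{D}:\mathbb{I}\longrightarrow\mathcal{A}$ with small scheme $\mathbb{I}$, I would form the two products $P=\prod_{i\in\operatorname{Ob}(\mathbb{I})}\mathrm{D}(i)$ and $Q=\prod_{(u:i\to j)\in\operatorname{Mor}(\mathbb{I})}\mathrm{D}(j)$, together with two morphisms $\phi,\psi:P\longrightarrow Q$, where the $u$-component of $\phi$ is $\mathrm{D}(u)\circ\pi_{i}$ and the $u$-component of $\psi$ is the projection $\pi_{j}$. Taking the equalizer $E\xrightarrow{e}P$ of $\phi$ and $\psi$ (which exists by hypothesis), one checks that the composites $\pi_{i}\circ e$ form a source that is natural for $\mathrm{D}$, and that this source is in fact a limit of $\mathrm{D}$: its universal property is inherited from those of the two products and of the equalizer. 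This yields $\operatorname{(1)}$. The remaining equivalence $\operatorname{(2)}\Longleftrightarrow\operatorname{(3)}$ I would obtain by noting that, in the presence of products, equalizers and finite intersections are mutually constructible, an intersection of a finite family of subobjects arising as an appropriate equalizer and, conversely, an equalizer being recovered as such an intersection.

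The main obstacle is precisely the $\operatorname{(2)}\Longrightarrow\operatorname{(1)}$ construction, that is, verifying the naturality of the source $(\pi_{i}\circ e)$ and the transfer of the universal property through the equalizer-of-products diagram; everything else is either immediate or routine bookkeeping. Since this construction is exactly the one performed for Theorem 12.3 in \cite{strecker:1990}, I would invoke that proof for the detailed verification.
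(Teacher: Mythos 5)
Your proposal is correct and follows essentially the same route as the paper: the paper's entire proof is the observation that all three conditions concern only the underlying construct $\mathcal{A}$, so the statement is exactly Theorem 12.3 of \cite{strecker:1990}, which is what you reduce to as well. The additional sketch you give of the equalizer-of-products construction for $\operatorname{(2)}\Longrightarrow\operatorname{(1)}$ is the standard argument behind that cited theorem and is accurate.
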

\begin{proof}
See the proof of Theorem 12.3 in \cite{strecker:1990}.
\end{proof}

Since the concepts of co-product, co-limits, co-equalizer are dual
of products, limits and equalizer, respectively, we do not
consider none of these concepts here due to the well-known Duality Principle.
In fact, the concepts on Category Theory can be easily converted
into our emergence theory. Therefore, our theory encompasses a theory
in which emergence can be formally treated, specially emergence present
in biological systems.

\section{Abstracting emergence}\label{sec5}

This section has as main purpose to present a procedure in order
to close the gap between the formalization of emergence
presented in the previous section and a concrete biological system. This procedure
is the one presented by Robert Rosen in his works on $(M, R)$-systems
\cite{rosen:1958A,rosen:1958B,rosen:1959}. In the first place, Rosen showed
that a treatment of biological systems using graph theory is incomplete;
then he used categories for this purpose. The resulting representation by
applying category is coined by him as abstract block diagram (ABD).

In order to give an idea for the reader about the results showed
by Rosen, we present here some theorems on ABD´s.

\begin{theorem}(Representation Theorem)\cite[Theorem 2]{rosen:1958B}
Given any system $M$ and a resolution of $M$ into components,
it is possible to find an abstract block diagram which
represents $M$ and which consists of a collection of suitable objects and
mappings from the category $\mathcal{S}$ of all sets.
\end{theorem}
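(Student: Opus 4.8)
The plan is to prove the statement by an explicit construction that converts the given resolution of $M$ into a diagram living entirely in $\mathcal{S}$. First I would unpack the data supplied by a resolution: a family of components $M_i$, $i \in I$, together with the information of which outputs of one component are delivered as inputs to another. The key observation is that each component, whatever its concrete biological nature, acts as a deterministic input--output device — it assigns to each admissible input state a definite output state. Passing to underlying sets, exactly in the spirit of the generalized underlying functor $U_{\mathcal{A}}:\mathcal{A}\longrightarrow\mathcal{S}$ of Definition~\ref{defnew9}, I would therefore represent the $i$-th component by a set $A_i$ of admissible inputs, a set $B_i$ of outputs, and a function $\phi_i : A_i \longrightarrow B_i$ belonging to $\operatorname{Mor}(\mathcal{S})$. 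In this way every component becomes a morphism of $\mathcal{S}$.

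Next I would encode the interconnection pattern of the resolution as a directed graph $\Gamma$ whose nodes are the sets $A_i, B_i$ and whose arrows are the $\phi_i$ together with the wiring maps that route an output $B_i$ into the input of a downstream component. Each wiring link is itself a morphism of $\mathcal{S}$ (a projection onto the relevant coordinate when a component admits several inputs, or an inclusion). The resulting labelled graph, with every node an object of $\mathcal{S}$ and every edge a morphism of $\mathcal{S}$, is by definition an abstract block diagram assembled entirely from suitable objects and mappings of $\mathcal{S}$, which is precisely what the statement asks for.

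Finally I would verify that this abstract block diagram genuinely represents $M$. Since the resolution decomposes $M$ without introducing material foreign to $M$, tracing an input of $M$ along the directed paths of $\Gamma$ and composing the corresponding $\phi_i$ with the wiring maps reproduces the global input--output assignment of $M$; that is, the composite read off from $\Gamma$ coincides with the map induced by $M$ on the underlying sets. This commutativity is exactly the sense in which the diagram \emph{represents} $M$.

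The main obstacle I anticipate is not the passage to $\mathcal{S}$ itself — any deterministic transformation induces a function on underlying state sets, so a set-level model always exists — but rather guaranteeing consistency of the wiring. Wherever an output is routed to an input, the target set of one map must coincide with (a coordinate of) the source set of the next, so that the composites defining the block diagram are well defined; and one must check that the composite along $\Gamma$ returns the map of $M$ rather than a strict restriction or coarsening of it. Establishing this compatibility for every interconnection is where the real work lies, and once it is in place the construction furnishes the desired abstract block diagram.
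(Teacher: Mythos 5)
The paper does not actually prove this statement: it is quoted verbatim from Rosen's 1958 paper (Theorem 2 of \cite{rosen:1958B}) and presented in Section~\ref{sec5} purely as background, with no argument supplied. So there is no in-paper proof to compare against. That said, your construction --- modelling each component of the resolution as a function between an input set and an output set, encoding the interconnections as wiring maps (projections onto coordinates of Cartesian products of inputs), and checking that composition along the resulting directed graph recovers the global input--output behaviour of $M$ --- is essentially Rosen's own argument, and it is correct in outline. The one caveat is that the whole construction rests on the premise that every component is a deterministic input--output device, i.e.\ already a mapping of sets; this is an axiom of Rosen's notion of ``system'' and ``resolution,'' not something you can derive, so your proof should state it as an explicit hypothesis rather than as an ``observation.'' Your identification of wiring consistency (matching codomains to the appropriate coordinates of downstream domains) as the real technical content is accurate --- Rosen resolves it exactly by taking each component's domain to be the Cartesian product of all its input sets.
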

According to Rosen, a resolution of a given system is, roughly speaking,
to decompose it into components.

\begin{theorem}(Canonical Form Theorem)\cite[Theorem 3]{rosen:1958B}
Given a block diagram for an arbitrary system $M$, we can find an
abstract block diagram represented $M$ such that none of the
mappings of the abstract diagram is factorable through any sub-product
of its domain.
\end{theorem}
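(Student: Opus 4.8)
The plan is to build the canonical abstract block diagram (ABD) out of an arbitrary one by pruning, from the domain of each set-map, precisely those input coordinates on which the map does not genuinely depend. First I would invoke the Representation Theorem to fix an ABD consisting of sets and maps in $\mathcal{S}$ that represents $M$; the task then reduces to modifying this diagram, \emph{without altering the system it represents}, so that no map factors through a proper sub-product of its domain.

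The heart of the argument is an observation about set-maps that I would isolate as a lemma. Let $f:\prod_{i=1}^{n}A_{i}\longrightarrow B$ be a map, and for $S\subseteq\{1,\ldots,n\}$ say that $f$ \emph{factors through} the sub-product $\prod_{i\in S}A_{i}$ if $f=\tilde{f}\circ\pi_{S}$ for some map $\tilde{f}$, where $\pi_{S}$ is the projection. The key claim is that the family of such index sets is closed under intersection, so that there is a unique minimal \emph{essential} set $S_{f}$. To see closure it suffices to treat two index sets $S$ and $T$: given two tuples that agree on $S\cap T$, one interpolates through a third tuple agreeing with the first on $S$ and with the second off $S$, and the two factorization hypotheses then force $f$ to take equal values on the original endpoints, so $f$ factors through $\prod_{i\in S\cap T}A_{i}$.

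Granting this lemma, I would define the canonical diagram by replacing each map $f$ with the induced map $\tilde{f}:\prod_{i\in S_{f}}A_{i}\longrightarrow B$ — equivalently, by deleting from the block diagram every input edge that feeds a coordinate $i\notin S_{f}$ into $f$. It then remains to verify two points. First, the new diagram still represents $M$: since $f=\tilde{f}\circ\pi_{S_{f}}$ as maps in $\mathcal{S}$, every composite and every input--output relation computed by the old diagram is reproduced verbatim by the new one, so the represented system is unchanged. Second, no map of the new diagram factors through a proper sub-product of its domain: this is immediate from the minimality of $S_{f}$, since $\tilde{f}$ depends genuinely on every remaining coordinate. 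Because each map is reduced to its essential inputs in one simultaneous pass, no iteration is required and termination is trivial.

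The main obstacle will be the closure-under-intersection lemma, i.e. the well-definedness of the essential coordinate set $S_{f}$; everything else is bookkeeping. A secondary point to handle carefully is the edge case of a map whose domain is already a single object or an empty product, where ``sub-product'' must be read as \emph{proper} sub-product and the conclusion holds vacuously. I would also remark that finiteness of the number of components — natural for the biological systems of interest — ensures all products here are finite, so the projections $\pi_{S}$ and the minimal essential set cause no set-theoretic difficulty.
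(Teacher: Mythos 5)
The paper does not actually prove this statement: it is quoted verbatim (as Theorem 3 of Rosen's 1958 paper \cite{rosen:1958B}) purely to motivate the passage from concrete biological systems to abstract block diagrams, so there is no in-paper argument to compare yours against. Judged on its own terms, your reconstruction is correct and follows the standard (and essentially Rosen's original) line: the collection of index sets $S$ through which a set-map $f:\prod_{i=1}^{n}A_{i}\to B$ factors is closed under intersection, hence there is a unique minimal essential set $S_{f}$, and replacing each $f$ by the induced map on $\prod_{i\in S_{f}}A_{i}$ yields a diagram with the required non-factorability while preserving all input--output composites. Your interpolation argument for the intersection lemma is sound: given $x,y$ agreeing on $S\cap T$, the tuple $z$ equal to $x$ on $S$ and to $y$ off $S$ agrees with $x$ on $S$ and with $y$ on $T$, forcing $f(x)=f(z)=f(y)$; note only that this uses the existence of $z$, i.e.\ that the factors are nonempty (automatic once $x$ and $y$ exist, and harmless in the degenerate empty case). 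The one step I would spell out slightly more is the claim that the pruned diagram ``represents the same system'': this depends on Rosen's convention that representation is determined by the input--output behaviour of the components, under which $f=\tilde{f}\circ\pi_{S_{f}}$ does indeed make the substitution invisible; with that convention stated, your single-pass reduction and the vacuous treatment of empty or singleton domains complete the proof.
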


\begin{theorem}\cite[Theorem 4]{rosen:1958B}
Given a decomposition of an arbitrary system $M$ into components,
we we may find a further decomposition of $M$ such
that every component of the new decomposition emits exactly one output.
\end{theorem}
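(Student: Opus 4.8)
The plan is to exploit the universal property of products in the category $\mathcal{S}$ of sets, which underlies every abstract block diagram. In the ABD framework a component is a mapping whose domain is a (sub-)product of the input sets and whose codomain records its outputs; a component \emph{emits exactly one output} precisely when its codomain is a single object rather than a product $B_1 \times B_2 \times \cdots \times B_m$ with $m \geq 2$. The refinement I would construct simply splits every multi-output component into as many single-output components as it has outputs, leaving the intrinsic behaviour of $M$ unchanged.

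Concretely, first I would list the components of the given decomposition and isolate those whose codomain is a nontrivial product $B = B_1 \times \cdots \times B_m$. For each such component $f \colon A \longrightarrow B_1 \times \cdots \times B_m$ I would form the $m$ mappings $f_j = \pi_j \circ f \colon A \longrightarrow B_j$, for $j = 1, \ldots, m$, where $\pi_j \colon B_1 \times \cdots \times B_m \longrightarrow B_j$ is the $j$-th projection. Each $f_j$ emits a single output $B_j$ by construction, so I would replace $f$ in the diagram by the collection $\{ f_1, \ldots, f_m \}$; components already emitting one output are left untouched.

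Next I would verify that the new diagram still represents the same system $M$, and this is exactly where the universal property of the product enters. The tuple $(f_1, \ldots, f_m)$ determines $f$ uniquely as the mediating morphism $\langle f_1, \ldots, f_m \rangle$ satisfying $\pi_j \circ \langle f_1, \ldots, f_m \rangle = f_j$, so no information about $f$ is lost and the original component can always be reconstructed from its single-output pieces. Since the product in $\mathcal{S}$ is the cartesian product equipped with these projections, the refined collection of mappings carries exactly the same data as the original decomposition.

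The hard part will be the bookkeeping of the interconnections rather than the splitting itself. When $f$ feeds its outputs into downstream components, each such component consumes one or more of the coordinates $B_j$ as inputs; after the split I must rewire the diagram so that each downstream component now draws its input directly from the corresponding $f_j$. I would argue that this rewiring is always legitimate because feeding a downstream map from the $j$-th projection of $f$ and feeding it from $f_j$ produce identical inputs, again by $\pi_j \circ f = f_j$. Verifying that the resulting object is a genuine ABD, in the sense that all composites remain well defined and the whole diagram still resolves $M$, is the only step requiring real care, and it reduces to checking that the refinement commutes with the existing composition structure of the diagram.
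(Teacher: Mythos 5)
The paper does not prove this statement at all: it is quoted verbatim from Rosen \cite[Theorem 4]{rosen:1958B} as background for the ABD formalism, so there is no internal proof to compare yours against. That said, your argument — replacing each component $f\colon A\longrightarrow B_1\times\cdots\times B_m$ by the family $f_j=\pi_j\circ f$ and invoking the universal property of the product in $\mathcal{S}$ to see that $f=\langle f_1,\ldots,f_m\rangle$ is recoverable, then rewiring downstream inputs coordinatewise — is the standard and correct route, and it is essentially the argument Rosen himself gives. The one point you rightly flag as needing care, that the rewired diagram is still a legitimate ABD representing $M$, is where the real content lies; it rests on the Representation Theorem's guarantee that components are set mappings on (sub-)products, so that the identity $\pi_j\circ f=f_j$ makes the refined interconnections literally equal, not merely isomorphic, to the original ones. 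Within the level of formality at which the paper treats ABDs, your proposal is sound.
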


The main result proposed by Rosen is given in the following.
\begin{theorem}\cite[Theorem 6]{rosen:1958B}
Let $M$ be an abstract block diagram which represents a definite
biological system. Let $T$ be a faithful functor. Then $T(M)$
is an abstract block diagram which represents the system if and only if
$T$ is regular and multiplicative.
\end{theorem}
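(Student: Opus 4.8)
The plan is to prove the two implications separately, relying on the Representation Theorem (\cite[Theorem 2]{rosen:1958B}) and the Canonical Form Theorem (\cite[Theorem 3]{rosen:1958B}) already recorded in this section, together with Rosen's notions of \emph{regular} and \emph{multiplicative} functor. Throughout, ``represents the system'' is read in the sense of those theorems: the ABD is a collection of objects and mappings of $\mathcal{S}$ whose composition pattern reproduces the functional organization of $M$, where the several inputs of each component enter through a Cartesian product and each component mapping is not factorable through any proper sub-product of its domain.

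First I would treat the sufficiency direction. Assuming $T$ multiplicative, the product objects $A_1\times\cdots\times A_k$ that bundle the inputs of a component are carried to the corresponding products $T(A_1)\times\cdots\times T(A_k)$, so the wiring of inputs into components is preserved. Assuming $T$ regular, each component mapping is carried to a mapping that is again non-factorable through a proper sub-product of its domain, so the canonical form supplied by \cite[Theorem 3]{rosen:1958B} survives the application of $T$. Because $T$ is faithful, distinct components and distinct interconnections of $M$ stay distinct in $T(M)$; hence $T(M)$ is an abstract block diagram whose objects and mappings exhibit the same organization, i.e.\ it represents the system.

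For necessity I would argue contrapositively. If $T$ were not multiplicative, some input product of $M$ would fail to be sent to the product of the images, so the inputs of that component would no longer be presented through the correct Cartesian product; by \cite[Theorem 2]{rosen:1958B} the resulting diagram could not represent the same system. If instead $T$ were not regular, some component mapping would become factorable through a proper sub-product after applying $T$, contradicting the canonical form required of an ABD that represents the system. In either case $T(M)$ would fail to represent the system, so representation forces $T$ to be both regular and multiplicative.

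The hard part will be justifying, at the level of Rosen's definitions, that these two conditions are exactly the right ones --- jointly sufficient and individually necessary rather than merely sufficient. This reduces to unwinding the notions of regular and multiplicative functor and verifying that faithfulness contributes precisely the injectivity on components that makes the representation faithful as organizational structure and not only as underlying data; the detailed verification follows Rosen's original argument in \cite[Theorem 6]{rosen:1958B}.
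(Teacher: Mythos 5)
This theorem is quoted verbatim from Rosen \cite[Theorem 6]{rosen:1958B} and the paper supplies no proof of it at all --- it appears in Section~\ref{sec5} purely as recalled background, with only the definitions of \emph{regular} and \emph{multiplicative} stated afterwards. So there is no argument in the paper to compare yours against; what can be assessed is whether your sketch stands on its own, and it does not. The central problem is that you assign to ``regular'' a meaning it does not have. By the definition given immediately after the theorem, $T$ is regular when (1) $T(A)\neq\emptyset$ whenever $A\neq\emptyset$ and (2) $A\subset B$ implies $T(A)\subset T(B)$. Your sufficiency argument instead claims that regularity guarantees that the image of each component mapping remains non-factorable through a proper sub-product of its domain, and your necessity argument claims the converse failure. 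Neither claim follows from preservation of non-emptiness and inclusions, and no bridge between the two notions is offered; the entire load-bearing step of both directions rests on this unsupported identification.

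The necessity direction has a second, independent gap: you invoke \cite[Theorem 2]{rosen:1958B} to conclude that a diagram whose input products are not sent to products ``could not represent the same system.'' But the Representation Theorem is an existence statement (some ABD representing $M$ exists); it says nothing about which diagrams fail to represent $M$, so it cannot rule anything out. Finally, your closing paragraph concedes that ``the detailed verification follows Rosen's original argument,'' which is exactly the content the proof was supposed to supply. As written, the proposal is a plausible narrative of why the theorem might be true, but it does not constitute a proof: the roles of regularity and multiplicativity in Rosen's framework (in particular, why multiplicativity is what makes $T$ commute with the bundling of inputs, and why the two conditions together with faithfulness are also necessary) would have to be established from Rosen's actual definitions of an abstract block diagram and of representation, none of which are reproduced here in enough detail to carry the argument.
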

In this context, $T: \mathcal{S}\longrightarrow\mathcal{S}$ is said
to be regular if it satisfies:$(1)$ If $A \in
\operatorname{Ob}(\mathcal{S})$ and $A\neq\emptyset $, then $T(A)\neq\emptyset$;
$(2)$ If $A\subset B$, then $T(A)\subset T(B)$. The functor
$T$ is called multiplicative if for any two sets $A_{1} , A_{2}$, one
has $T(A_{1})\times T(A_{2})= T(A_{1}\times A_{2})$.

To generate an ABD it is necessary to access empirically a given
biological system, such as: cells, tissues, organs,
organic systems, or the individual. In any case, it is possible to decompose
such system into parts and their relationships. The parts of
the systems, which are considered the biological agents of the system,
are associated with the \emph{hom} sets, while the relationships between agents
are considered as objects. It is important to note that each object is considered as
an output or input of a given \emph{hom} set. To clarify such concept, let us consider
a ${hom}_1$ that generates an output $A$ that serves as input to a second
${hom}_{2}$. This means that $A$ belongs to the codomain of all morphisms
in ${hom}_{1}$, while it also belongs to the domain of the ${hom}_{2}$.
It is clear from the context that each morphism is a mapping between inputs
and outputs of a \emph{hom} set. This is the modeling of the operations made
by a biological agent, like a cell in a tissue, or a organelle in a cell,
a tissue in a organ, and so on. The operations of the objects stands
for its own properties that are given by its kind. In a tissue context system,
a cell can be an input or and output, at the same time that it has
its own operations. The set of all operations of objects in an ABD
is the internal structure of the system. With this concepts in mind, one
can decompose a biological system into parts, after associating them with the \emph{hom}
sets regarding their mappings; we then can consider the outputs and inputs
between \emph{hom} sets as structured sets. In particular, the $\mathcal{S}$
category is a case in which its objects does not have an internal structure.
Note that the empirical criteria to describe such category is given by a minimal
agency of inputs and outputs, such as atoms or molecules. However, the level in
which the $\mathcal{S}$ category should be considered is dependent in the
empirical rigor that a specific experimental setting require. In most cases, we use
the basic level of atoms associating them to the $\mathcal{S}$ category, and the
other constructs are utilized to represent other levels of abstraction.
In this form, it is possible to induce an emergence based on an ABD built.

\section{Biological implications}\label{sec6}

In this section, we discuss the proposed mathematical theory and we
explain how it fits in studies involving biological systems. In the
previous section, we suggested the abstraction of emergence from the decomposition
of biological systems. For each concrete biological system, there exists
many forms to perform this decomposition.

Note that any of these decompositions represents an ABD of the same concrete
system. For instance, let us consider a simple organism which is
multicellular, has simple tissues, organs, and organic systems, like a flat worm.
We can decompose such organism by its organic systems, tissues, organs, cells,
molecules, and atoms; each of these these cases stands for the same phenomenon.
However, the properties and features within these cases are much different.
This is true since the biological agents, which constitutes the parts of the
whole, differs in nature. In other words, a set of interacting cells does not
represent a tissue, meanwhile a set of tissues does not represent an organ, and so on.
This happens because a higher decomposition level of hierarchy is
emergent in terms of any other lower level. This fact implies that each
decomposition of the same concrete biological system induces an hierarchy based
on how emergent each level is among themselves.

Connecting Relational Biology and our proposed theory, we define
emergence as an ordered triple $ {\mathcal{E}}_{\mathcal{A}}=
( \mathcal{A}, e_{\mathcal{A}}, U_{\mathcal{A}})$ (see Definition~\ref{defnew10}),
in which the construct $\mathcal{A}$ is an
ABD generated by decomposition. More specifically,
$\operatorname{Ob}(\mathcal{A})$ are inputs and outputs,
$\operatorname{Mor}(\mathcal{A})$ are all operations of the system
and the \emph{hom} sets are the biological agents that compose the system. Note
that if $\mathcal{A}=\mathcal{S}$, then there is no emergence since there is
no internal structure. In this context, an emergent phenomenon is interpreted
as the existence of internal structures in the objects
$\operatorname{Ob}(\mathcal{A})$ and the absence of
any structure in their underlying class of objects $\operatorname{Ob}(\mathcal{S})$.
Note that when considered as sets, the objects of $\operatorname{Ob}(\mathcal{A})$ and
$\operatorname{Ob}(\mathcal{S})$ are the same, but as objects they are quite different,
since the objects of $\mathcal{A}$ have internal structure and the objects of
$\mathcal{S}$ do not have. To make our assumption more clear,
we take the category $\operatorname{Ob}(\mathcal{S})$
as the ABD of maximal decomposition, say, atomic – which corresponds to the system
in which its parts bear minimal functionality in biological terms.
In this basal category, all emergent effects on its parts are lost due to reduction.
We emphasize that such effect is present in both inputs and outputs
conceived as Cartesian product of objects, rather than in its agents
conceived as \emph{hom} sets. Since the organizational level increases,
new laws and effects takes place, and
more emergent the system becomes. This is true because a lower level cannot
explain the higher ones. At the same time, from any higher level of
organization, it is possible to explain and understand any lower level.

Returning to the example of the flat worm, when the worm is divided into cells,
there will be ``less information" about the life of the worm when
comparing with the worm divided in tissues. This fact occurs because all tissues are
composed by cells and their corresponding
operations; hence, we can access all information from tissue to cell, but the
reciprocal is not true. The same interpretation can be made to any other pair
of representation via ABDs. It is important to note that the emergent
criteria is not in the physical
realization of the biological agents (which corresponds to morphisms) nor in its
material composition (which corresponds to their underlying sets).
It lies, in fact, in the operations within inputs and outputs and in its
algebraical functionality that acts among \emph{hom} sets.

\section{Final Remarks}\label{sec7}

In this paper, we have proposed a theory to describe the concept of emergence.
As it is well-known, biological phenomena are, in fact, emergent
phenomena. Since they are hard to be investigated, we have chosen a powerful mathematical
tool to do this task, namely, Theory of Categories. 
We utilized constructs, their operations and
their corresponding generalized underlying functor to characterize emergence. 
After this, we introduced and showed several results concerning 
homomorphism (isomorphism) between 
emergences, representability, pullback, pushout, equalizer, product and 
co-product of emergences among other concepts. Our theory can be abstracted 
from a concrete biological phenomenon by means of ABDs.
This work is the beginning
of our investigation concerning emergence
phenomena and how they can be explored by means of categories.
As future works, there are much research to be done and much
correlation to be found. In this light we intend, in a near future, to expand
and develop the presented theory in order to
improve the interpretation of some new situations of the real life.

\begin{center}
\textbf{Acknowledgements}
\end{center}
This research has been partially supported by the Brazilian Agencies
CAPES and CNPq.

\small

\end{document}